\numberwithin{equation}{section}
\def\subsection{\@startsection{subsection}{3}%
  \z@{.5\linespacing\@plus.7\linespacing}{.5\linespacing}%
  {\normalfont\itshape}} \makeatother  
\makeatletter \renewenvironment{proof}[1][\proofname]{
  \par\pushQED{\qed}\normalfont
  \topsep6\p@\@plus6\p@\relax
  \trivlist\item[\hskip\labelsep\bfseries#1\@addpunct{.}]
  \ignorespaces}{
  \popQED\endtrivlist\@endpefalse} \makeatother
\theoremstyle{plain}
\newtheorem{thm}{Theorem}[section]
\newtheorem{thmA}{Theorem}[]
\newtheorem{lem}[thm]{Lemma}
\newtheorem{prop}[thm]{Proposition}
\newtheorem{cor}[thm]{Corollary}
\newtheorem{question}[thm]{Question}
\theoremstyle{definition}
\newtheorem{dfn}[thm]{Definition}
\newtheorem{example}[thm]{Example}
\newtheorem{noname}[thm]{}
\newtheorem{subnoname}{}[thm]
\newtheorem{rem}[thm]{Remark}
\newtheorem{construction}[thm]{Construction}
\newtheorem{notation}[thm]{Notation}
\theoremstyle{remark}
\newtheorem*{smallremark}{Remark}
\newtheorem{case}{Case} \makeatletter \@addtoreset{case}{thm}\makeatother
\newtheorem{claim}{Claim} 
\newcommand{\bthm}{\begin{theorem}}
\newcommand{\bprop}{\begin{proposition}}
\newcommand{\blem}{\begin{lemma}}
\newcommand{\bcor}{\begin{corollary}}
\newcommand{\brem}{\begin{remark}}
\newcommand{\bdfn}{\begin{definition}}
\newcommand{\bitem}{\begin{itemize}}
\newcommand{\benum}{\begin{enumerate}}
\newcommand{\bex}{\begin{example}}
\newcommand{\bno}{\begin{noname}}
\newcommand{\bsno}{\begin{subnoname}}
\newcommand{\bsrem}{\begin{smallremark}}
\newcommand{\bnot}{\begin{notation}}
\newcommand{\bcon}{\begin{construction}}
\newcommand{\bca}{\begin{case}}
\newcommand{\bcl}{\begin{claim}}
\newcommand{\beq}{\begin{equation}}
\newcommand{\bpf}{\begin{proof}}
\newcommand{\epf}{\end{proof}}
\newcommand{\eeq}{\end{equation}}
\newcommand{\ecl}{\end{claim}}
\newcommand{\eca}{\end{case}}
\newcommand{\econ}{\end{construction}}
\newcommand{\enot}{\end{notation}}
\newcommand{\esrem}{\end{smallremark}}
\newcommand{\eno}{\end{noname}}
\newcommand{\esno}{\end{subnoname}}
\newcommand{\eex}{\end{example}}
\newcommand{\eitem}{\end{itemize}}
\newcommand{\eenum}{\end{enumerate}}
\newcommand{\ethm}{\end{theorem}}
\newcommand{\eprop}{\end{proposition}}
\newcommand{\elem}{\end{lemma}}
\newcommand{\ecor}{\end{corollary}}
\newcommand{\erem}{\end{remark}}
\newcommand{\edfn}{\end{definition}}
\newcommand{\ble}{\begin{lemma}}
\newcommand{\ele}{\end{lemma}}
\newcommand{\ov}{\overline}
\newcommand{\wt}{\tilde}
\def\8{\infty}
\def\.{\cdot}
\def\A{\mathbb{A}}
\def\An{\mathbb{A}^n}
\def\Ast{\mathbb{A}^1_*}
\def\AuA{\mathbb{A}^1\cup_{\{0\}}\mathbb{A}^1}
\def\P{\mathbb{P}}
\def\C{\mathbb{C}}
\def\Z{\mathbb{Z}}
\def\N{\mathbb{N}}
\def\Q{\mathbb{Q}}
\def\xra{\xrightarrow}
\def\mono{\hookrightarrow}
\def\:{\colon}
\def\map{\dashrightarrow}
\renewcommand{\iff}{\Leftrightarrow}
\def\med{\medskip}
\def\eps{\varepsilon}
\def\Aut{\operatorname{Aut}}
\def\End{\operatorname{End}}
\def\redd{\mathrm{red}}
\def\Pic{\operatorname{Pic}}
\def\Exc{\operatorname{Exc}}
\def\Aut{\operatorname{Aut}}
\def\Pic{\operatorname{Pic}}
\def\Exc{\operatorname{Exc}}
\def\Spec{\operatorname{Spec}}
\def\id{\operatorname{id}}
\def\Et{\operatorname{\acute{E}t}}
\def\Etc{\operatorname{\acute{E}t_\mathbb{C^*}}}
\def\EC{\operatorname{\acute{E}C}}
\def\pr{\operatorname{pr}}
\newcommand{\ti}{\tilde}
\def\@tocline#1#2#3#4#5#6#7{\relax \ifnum #1>\c@tocdepth \else \par \addpenalty\@secpenalty\addvspace{#2} \begingroup \hyphenpenalty\@M \@ifempty{#4}{\@tempdima\csname r@tocindent\number#1\endcsname\relax}{\@tempdima#4\relax} \parindent\z@ \leftskip#3\relax \advance\leftskip\@tempdima\relax \rightskip\@pnumwidth plus4em \parfillskip-\@pnumwidth #5\leavevmode\hskip-\@tempdima \ifcase #1 \or\or \hskip 1em \or \hskip 2em \else \hskip 3em \fi #6\nobreak\relax \dotfill\hbox to\@pnumwidth{\@tocpagenum{#7}}\par \nobreak \endgroup  \fi}
\ifpdf \usepackage[citecolor={green},linkcolor={blue}, colorlinks=true]{hyperref} \else \usepackage[hypertex,citecolor={magenta},linkcolor={blue},colorlinks=true]{hyperref} \fi
\begin{document}


\subjclass[2010]{14R15, 14R25}
\keywords{Jacobian Conjecture, Q-homology plane, pseudo-plane, étale, equivariant, endomorphism, group action, Belyi map, Shabat polynomial}

\thanks{Research project partially funded by the Ministry of Science and Higher Education of the Republic of Poland, Iuventus Plus grant no.\ 0382/IP3/2013/7}

\author{Adrien Dubouloz} 
\address{IMB UMR5584, CNRS, Univ. Bourgogne Franche-Comté, F-21000 Dijon, France.}
\email{adrien.dubouloz@u-bourgogne.fr}
\author{Karol Palka}
\address{Institute of Mathematics, Polish Academy of Sciences, Śniadeckich 8, 00-656 Warsaw, Poland} 
\email{palka@impan.pl}

\title[The JC fails for pseudo-planes]{The Jacobian Conjecture\\ fails for pseudo-planes} 
 
\begin{abstract}
A smooth complex variety satisfies the Generalized Jacobian Conjecture if all its étale endomorphisms are proper. We study the conjecture for $\Q$-acyclic surfaces of negative Kodaira dimension. We show that $G$-equivariant counterexamples for infinite group $G$ exist if and only if $G=\C^*$ and we classify them relating them to Belyi-Shabat polynomials. Taking universal covers we get rational simply connected $\C^*$-surfaces of negative Kodaira dimension which admit non-proper $\C^*$-equivariant étale endomorphisms.

We prove also that for every integers $r\geq 1, k\geq 2$ the $\Q$-acyclic rational hyperplane $u(1+u^{r}v)=w^k$, which has fundamental group $\Z_k$ and negative Kodaira dimension, admits families of non-proper étale endomorphisms of arbitrarily high  dimension and degree, whose members remain different after dividing by the action of the automorphism group by left and right  composition. 
\end{abstract}

\maketitle

\section{Main result}
The Jacobian Conjecture asserts that if an algebraic endomorphism of the complex affine space $\An =\Spec (\C[x_1,\ldots,x_n])$  has an invertible Jacobian then it is an isomorphism. The conjecture is open and hard, even for $n=2$. One of the ideas which may contribute to our understanding of it is to study the problem in a broader class of varieties. A general form of the conjecture has been proposed by M. Miyanishi.  

\begin{dfn}[Generalized Jacobian Conjecture]\label{def:GJC} A smooth complex variety satisfies the \emph{Generalized Jacobian Conjecture} if all étale endomorphisms of that variety are proper. 
\end{dfn}

Unless the topological Euler characteristic of the variety is zero, properness is equivalent to the invertibility of the endomorphism (see Lemma \ref{lem:etale-basics}), so for affine spaces the conjecture is equivalent to the original Jacobian Conjecture.  The Generalized Jacobian Conjecture has been studied for certain classes of algebraic varieties, see Miyanishi's lecture notes \cite{Miyanishi-Lectures_on_polynomials} for a good introduction to the subject. It holds for quasi-projective varieties of log general type due to a result of Iitaka (see Lemma \ref{lem:etale-basics}(4)) and it holds for curves by \cite{Miyanishi-etale_endom}. For surfaces there are many positive partial results, see \cite{Miyanishi-etale_endom},  \cite{MiyMa-GJC_and_related_topics}, \cite{Miyan-Kamb_JC}, \cite{MiyMa-hp_with_torus_actions}, \cite{Miyanishi_JC}, \cite{GuMi-GJC_for_singular}. But the most intriguing question, which is in fact the main motivation, is whether the conjecture holds for varieties similar to affine spaces, similar in the sense of having the same logarithmic Kodaira dimension or the same, i.e.\ trivial, rational homology groups. We restrict our attention to the intersection of these two classes in dimension two, that is, to $\Q$-homology planes of negative Kodaira dimension. Similarly to $\A^2$, they all admit an $\A^1$-fibration over $\A^1$ (hence are rational, see Proposition \ref{lem:Qhp_structure}). The only $\Z$-homology plane in this class is the affine plane $\A^2$ (\cite[3.4.3.1]{Miyan-OpenSurf}). As expected, the difficulty of the problem increases as we decrease the complexity of the fundamental group of the surface. Gurjar-Miyanishi \cite[\S 6]{GurjarMiyanishi-GJC_for_Qhp} argued that the conjecture holds if $\pi_1$ is non-cyclic, but later Miyanishi noticed that the proof is wrong in case the fundamental group is $\Z_2*\Z_2$ and he found a counterexample of degree $2$ in that case (see Proposition \ref{prop:GJC_known}). In Section \ref{ssec:Miyanishi} we construct étale endomorphisms of every positive degree on Miyanishi's surface.

The question for $\Q$-homology planes of negative Kodaira dimension which have cyclic fundamental groups, called \emph{pseudo-planes}, remained open. These surfaces are geometrically and topologically the most similar ones to $\A^2$, and hence the Jacobian Conjecture for them is of biggest interest. Our main result is that the conjecture fails for many of them, i.e.\ that their étale endomorphisms do not have to be proper. Surprisingly, this is so even if we impose an additional restriction that we look only for counterexamples invariant with respect to an effective action of an infinite algebraic group. In case a smooth complex variety comes with some effective algebraic action of a group $G$ then we say that it satisfies the \emph{$G$-equivariant Generalized Jacobian Conjecture} if all $G$-equivariant étale endomorphisms of that variety are proper. 

Let $k, r$ be positive integers. The smooth affine surface 
\begin{equation}
\ti S(k,r)=\{x^ry=z^k-1\}\subseteq \Spec (\C[x,y,z])
\end{equation} is simply connected (see Example \ref{ex:tiS(k,r)}) and for every $a\in \{1,2,\ldots,k\}$ coprime with $k$ it admits a free $\Z_k$-action $\eps*_a(x,y,z)=(\eps x, \eps^{-r} y, \eps^{-a}z)$, where we identify $\Z_k$ with the group of $k$-th roots of unity ($\Z_1\cong \{1\}$). The quotient surface, denoted by $S(k,r,a)$, is a pseudo-plane with fundamental group $\Z_k$ and the hyperbolic $\C^*$-action  $\lambda\cdot(x,y,z)=(\lambda x,\lambda^{-r}y,z)$ descends to it. Masuda and Miyanishi \cite{MiyMa-hp_with_torus_actions} characterized the surfaces $S(k,r,a)$, $k,r\geq 2$, as the only pseudo-planes with a hyperbolic $\C^*$-action which admit a unique $\A^1$-fibration. We use this result to show the following reduction.

\begin{thmA}\label{thm:(G,S)=(Cst,S(kra))} Let $S$ be a $\Q$-homology plane of negative Kodaira dimension with an effective action of an infinite algebraic group $G$. The $G$-equivariant Jacobian Conjecture holds for $S$ unless $G=\C^*$ and $S$ is $\C^*$-equivariantly isomorphic to some pseudo-plane $S(k,r,a)$ for some $k,r\geq 2$ and $a\in \{1,2,\ldots,k-1\}$ coprime with $k$.
\end{thmA}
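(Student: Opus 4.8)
The plan is to combine the structure theory of $\Q$-homology planes of negative Kodaira dimension with a case analysis on the infinite group $G$, and then invoke the Masuda–Miyanishi characterization of the surfaces $S(k,r,a)$. First I would recall from Proposition \ref{lem:Qhp_structure} that such a surface $S$ carries an $\A^1$-fibration $\rho\colon S\to\A^1$, so $S$ is rational and affine. The connected component $G^\circ$ of the identity is a connected linear algebraic group of positive dimension acting effectively on $S$; since $S$ is affine of dimension two, $G^\circ$ must be one of $\C^*$, $\G_a=\C^+$, a two-dimensional torus, $\G_a\rtimes\C^*$, or a group with a nontrivial unipotent part. The case where $G^\circ$ contains a $\G_a$, a two-torus, or $\mathrm{SL}_2/\mathrm{PGL}_2$ is disposed of first: a $\G_a$-action on an affine surface has general orbits giving an $\A^1$-fibration, and the presence of enough automorphisms forces $S$ to be "large" — one shows (using Lemma \ref{lem:etale-basics}) that every $G$-equivariant étale endomorphism is then forced to be an automorphism, i.e. the $G$-equivariant GJC holds. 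Concretely, if $S$ admits a nontrivial $\G_a$-action then a $G$-equivariant étale endomorphism $f$ must send general orbits to orbits and restrict to étale maps on them; since étale endomorphisms of $\A^1$ are isomorphisms (\cite{Miyanishi-etale_endom}) and the induced map on the base $\A^1$ is likewise an isomorphism, $f$ is bijective hence an isomorphism. The remaining possibility is $G^\circ=\C^*$, which is where the real content lies.

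So assume $G^\circ=\C^*$ acts effectively on $S$. Because $S$ has negative Kodaira dimension and is $\Q$-acyclic, the $\C^*$-action cannot be fixed-point free (Euler characteristic obstruction) and cannot be parabolic or have only a single type of ray in a way that would make $\ov\kappa(S)\geq 0$; in fact one argues the action must be \emph{hyperbolic} (both a positive-weight and a negative-weight ray at a fixed point) — the elliptic and parabolic cases either contradict $\Q$-acyclicity or force $\ov\kappa(S)\geq 0$ via the quotient structure. Given a hyperbolic $\C^*$-action on a $\Q$-homology plane of negative Kodaira dimension, the Masuda–Miyanishi classification \cite{MiyMa-hp_with_torus_actions} describes $S$ explicitly; among these, the ones with \emph{cyclic} fundamental group (pseudo-planes) and with the property forced on us — namely that $S$ admits a \emph{unique} $\A^1$-fibration — are exactly the $S(k,r,a)$ with $k,r\geq 2$. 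The uniqueness of the $\A^1$-fibration is what we must extract: if $S$ had two distinct $\A^1$-fibrations it would admit a larger automorphism group (an amalgamated-product structure à la Danilov–Gizatullin), and then, as in the paragraph above, $G$-equivariant étale endomorphisms would be automorphisms; so we may assume the fibration is unique, and then cite \cite{MiyMa-hp_with_torus_actions} to land on $S(k,r,a)$. The constraints $k,r\geq 2$ and $\gcd(a,k)=1$, $a\in\{1,\dots,k-1\}$, come directly from the definition of these surfaces together with the observation that $k=1$ gives $S=\ti S(1,r)$ which is \emph{not} a pseudo-plane counterexample candidate (it is simply connected but then one checks GJC holds $\C^*$-equivariantly), and $r=1$ likewise yields a surface with a non-unique $\A^1$-fibration, hence extra automorphisms and again GJC.

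The main obstacle I anticipate is the \textbf{reduction within the $\C^*$-case}: showing that \emph{either} $S$ has a non-unique $\A^1$-fibration (hence automatically satisfies the equivariant GJC by an automorphism-group argument) \emph{or} it is one of the $S(k,r,a)$. This requires a careful analysis of the minimal smooth completion $(\ov S, D)$ with its $\C^*$-action, tracking the weighted graph of $D$, the location of fixed points, and the two special fibers of the induced $\A^1$-fibration; one needs the Kodaira-dimension hypothesis $\ov\kappa(S)=-\infty$ to pin down the shape of $D$ (a "chain-like" boundary) and $\Q$-acyclicity to force the multiplicities in the two degenerate fibers to be coprime-power patterns, which is precisely the combinatorial signature of $S(k,r,a)$. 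The other delicate point is handling the possibility that $G$ is infinite but $G^\circ$ is trivial — impossible for algebraic $G$, since an infinite algebraic group has positive-dimensional identity component, so this is a non-issue once stated. Throughout, Lemma \ref{lem:etale-basics} is used repeatedly to pass between "proper", "finite", and "isomorphism" for étale endomorphisms, exploiting that these surfaces have nonzero Euler characteristic (being $\Q$-acyclic, $e(S)=1$).
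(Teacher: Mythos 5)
Your overall skeleton (eliminate $\C^+$, force the $\C^*$-action to be hyperbolic, then invoke Masuda--Miyanishi) matches the paper's strategy, but the step where you dispose of the $\G_a$-case contains a genuine gap, and it is precisely where the paper has to do its hardest work. You assert that for a $G$-equivariant étale endomorphism $f$ respecting the orbit $\A^1$-fibration $\rho\colon S\to\A^1$, ``the induced map on the base $\A^1$ is likewise an isomorphism.'' This is false in general: étaleness of $f$ does not force étaleness of $f_\rho$ when $\rho$ has more than one multiple fiber, and Miyanishi's surface with $\pi_1(S)\cong\Z_2*\Z_2$ admits non-proper étale endomorphisms respecting its $\A^1$-fibration for which $f_\rho$ is a Chebyshev polynomial $T_n$ of degree $n>1$ (the paper even produces such endomorphisms in every degree). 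The paper's Lemma \ref{lem:when_etaB_has_deg=1} gives $\deg f_\rho=1$ only when $\rho$ has at most one non-reduced fiber (which handles the cyclic-$\pi_1$ case, Corollary \ref{cor:Et(S,p)-for_A1-fib_pseudo-planes}), and Proposition \ref{prop:GJC_known} handles non-cyclic $\pi_1$ other than $\Z_2*\Z_2$; but for $\pi_1\cong\Z_2*\Z_2$ one must first describe all such $f$ explicitly (Proposition \ref{prop:descent_for_A1-fibrations}: $f_\rho=T_n$ and $f$ descends to $\eta_0(x,y)=(T_n(x),A(x)y+B(x))$ on $\A^2$ with $A(1)=\pm n$) and then exploit $\C^+$-equivariance: the action is shown to be free, so $A(x)Q(x)=Q(T_n(x))$ with $Q=c(x-1)^{m^-}(x+1)^{m^+}$, forcing $A(1)=n^{2m^-}$ and hence $n=1$. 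Without this computation your $\C^+$-case collapses.

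Two smaller but real issues. First, in the $\C^*$-case you claim the elliptic and parabolic cases ``contradict $\Q$-acyclicity or force $\ov\kappa\geq 0$''; they do not --- they force $S\cong\A^2$ with a linear $\C^*$-action (Remark \ref{rem:non-hyperbolic_Cst_action}), which satisfies all the hypotheses, and the paper must separately verify the equivariant conjecture there via the $\Ast$-fibration $(x,y)\mapsto[x^q:y^p]$ on $\A^2\setminus\{0\}$ and a multiplicity/Euler-characteristic argument. Second, the theorem asserts $G=\C^*$ exactly, not merely $G\supseteq\C^*$; ruling out larger $G$ (e.g.\ finite extensions of $\C^*$) requires the explicit formula for the lift $\ti\eta$ from Theorem \ref{thm:GJC_fails_v2} together with the Masuda--Miyanishi description of $\Aut(\ti S(k,r))$, and your proposal does not address this. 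Your heuristic that a non-unique $\A^1$-fibration yields ``enough automorphisms'' to force equivariant étale endomorphisms to be automorphisms is also unsubstantiated and is not how the paper argues.
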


In the exceptional cases we obtain a classification (see Theorem \ref{thm:GJC_fails_v2} for a stronger and more detailed result).
  
\begin{thmA}\label{thm:GJC_fails} Let $k, r\geq 2$ be integers and let $a\in \{1,\ldots,k-1\}$ be coprime with $k$. The $\C^*$-pseudo-plane $S(k,r,a)$ admits a $\C^*$-equivariant étale endomorphism of degree $d$ (hence non-proper for $d>1$) if and only if \begin{equation} d\equiv 1 \mod k(r-1) \text{\ \ \ or\ \ \ \ } k|r,\  a=1 \text{\ and \ } d\equiv r \mod k(r-1).
\end{equation}
In particular, the $\C^*$-equivariant Generalized Jacobian Conjecture fails for every $S(k,r,a)$, $k,r\geq 2$ and hence for every finite cover of it.
\end{thmA}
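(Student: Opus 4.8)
Write $\ti S=\ti S(k,r)$ and let $\pi\colon\ti S\to S:=S(k,r,a)$ be the universal covering; it is Galois with group $\Z_k$ acting by $\ast_a$. A $\C^*$-equivariant étale endomorphism $\phi$ of $S$ lifts through $\pi$ to an étale endomorphism $\psi$ of $\ti S$ (here one uses that $\ti S$ is simply connected, Example~\ref{ex:tiS(k,r)}); since $\C^*$ and $\ti S$ are connected, $\psi$ is automatically $\C^*$-equivariant with multiplier one, $\psi(\lambda p)=\lambda\psi(p)$, and $\Z_k$-equivariant for $\ast_a$ through some group endomorphism $\rho\colon\Z_k\to\Z_k$; conversely every $\C^*$-equivariant multiplier-one étale endomorphism of $\ti S$ which is $\Z_k$-equivariant for $\ast_a$ through some $\rho$ descends to such a $\phi$ of the same degree. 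So it suffices to classify these $\psi$. Because the ring of $\C^*$-invariants of $\ti S=\{x^ry=z^k-1\}$ is $\C[z]$, and the weight-$1$ and weight-$(-r)$ eigenspaces are $x\C[z]$ and $y\C[z]$, every such $\psi$ has the form $\psi^*(x,y,z)=(xf(z),yg(z),Z(z))$ with $f,g,Z\in\C[z]$, $f\neq0$, subject to the single relation $(z^k-1)f(z)^rg(z)=Z(z)^k-1$, and comparing function fields gives $\deg\psi=\deg Z$.

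\textbf{An étaleness criterion and the ``only if''.} Let $\omega=x^{-1}\,dx\wedge dz$, a rational $2$-form on $\ti S$ which is regular and nowhere vanishing on $\{x\neq0\}$ and which, near each of the $k$ reduced lines $L_\zeta$ of the fibre $\{x=0\}$ of the projection $x$, equals $(\text{unit})\cdot x^{r-1}\,dx\wedge dy$; thus $\operatorname{div}(\omega)=(r-1)\sum_{\zeta^k=1}L_\zeta$. A direct computation gives $\psi^*\omega=Z'(z)\,\omega$, while $\psi^*(\sum_\zeta L_\zeta)=\operatorname{div}(\psi^*x)=\sum_\zeta L_\zeta+\operatorname{div}(f(z))$ since $\sum_\zeta L_\zeta=\operatorname{div}(x)$. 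Hence the ramification divisor of $\psi$ is $\operatorname{div}(Z'(z))-(r-1)\operatorname{div}(f(z))$, and by purity of the branch locus $\psi$ is étale if and only if this vanishes, i.e.\ if and only if $Z'$ and $f^{r-1}$ have the same divisor on $\ti S$, i.e.\ if and only if $Z'(z)=c_0f(z)^{r-1}$ for some $c_0\in\C^*$; in particular then $\deg Z=(r-1)\deg f+1$. Writing out the $\Z_k$-equivariance of $\psi$ in coordinates then shows that all exponents of $f$ lie in a single residue class $i_0\bmod k$, say $f(z)=z^{i_0}h(z^k)$ with $0\le i_0<k$, that $\rho(\eps)=\eps^{1-ai_0}$, and — combining with $Z'=c_0f^{r-1}$ — that $(r-1+a)i_0\equiv0\pmod k$. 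Finally, $g=(Z^k-1)/((z^k-1)f^r)$ must be a polynomial; comparing orders of vanishing at $z=0$ forces $h(0)\neq0$ and $i_0\le1$, with $i_0=1$ possible only when $a=1$, whence $k\mid r$ by the congruence. Since $\deg\psi=\deg Z=(r-1)(i_0+k\deg h)+1$, this gives $d\equiv1\pmod{k(r-1)}$, or else $a=1$, $k\mid r$ and $d\equiv r\pmod{k(r-1)}$, proving necessity.

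\textbf{Sufficiency and the failure of the conjecture.} For the converse one must exhibit, for each admissible $d$, polynomials $f,g,Z$ as above. Substituting $t=z^k$, polynomiality of $g$ becomes a divisibility of the shape $(t-1)h(t)^r\mid t^{i_0}W(t)^k-c_1^k$, where $W$ is an explicit polynomial produced from $h$ by a first-order linear differential equation (and $c_1=0$ when $i_0=0$); equivalently, $t\mapsto t^{i_0}W(t)^k$ must be a \emph{Shabat polynomial} of degree $d$ with prescribed ramification profiles over $0$, $1$ and $\infty$. These profiles satisfy the Riemann--Hurwitz identity — the total ramification over $0$ and $1$ equals that over $\infty$, namely $d-1$ — so a genus-$0$ Hurwitz cover realizing them exists by the Riemann existence theorem, and unwinding it yields the required $h,c_0,c_1$; this realizes the announced connection with Belyi--Shabat polynomials. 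For the last assertion only one value $d>1$ is needed: taking $i_0=0$ and $\deg h=1$, i.e.\ $f(z)=z^k-\mu$ and $d=k(r-1)+1$, the construction reduces to a single polynomial equation in the parameter $\mu$, which is solvable; the resulting $\psi$ is a $\C^*$-equivariant étale endomorphism of $\ti S$ of degree $k(r-1)+1$ commuting with the whole $\Z_k$-action $\ast_a$, so it descends to $S$ and, being compatible with every subgroup action, to every connected finite (étale) cover of $S$ as well. Non-properness for $d>1$ follows from $\chi(S)=1\neq0$ and Lemma~\ref{lem:etale-basics}.

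\textbf{Main obstacle.} The substantive step is the realization in the sufficiency direction: for each admissible degree, constructing a genus-$0$ branched cover of $\P^1$ with the prescribed ramification over $\{0,1,\infty\}$ — equivalently, the relevant Belyi--Shabat polynomial — and verifying that the combinatorial constraints are compatible. The étaleness criterion and the ensuing congruences of the second step are, by comparison, routine bookkeeping once the form $\omega$ has been identified.
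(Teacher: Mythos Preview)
Your approach is essentially the paper's: lift to $\ti S(k,r)$, use $\C^*$-equivariance to reduce to maps of the shape $(xf(z),yg(z),Z(z))$, derive an étaleness criterion, impose $\Z_k$-equivariance to get the congruences, and realize sufficiency via Belyi--Shabat polynomials. The one notable difference is the étaleness step: the paper works on the base, analyzing $\eta_\rho$ through the multiplicity condition for fibers (Lemmas~\ref{lem:multiplicities} and~\ref{lem:Ri-condition}) and then reconstructing the explicit formula for $\ti\eta$, whereas you compute the Jacobian directly via the rational $2$-form $\omega$ (equivalently, via the nowhere-vanishing Poincar\'e residue form on the hypersurface), obtaining $Z'=c_0f^{r-1}$ in one stroke. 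Your $i_0$ is the paper's $1-\alpha$, and both routes land on the same congruences.

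Two points deserve tightening. First, the implication ``$i_0=1\Rightarrow a=1$'' is not a pure order-of-vanishing statement: comparing orders at $z=0$ yields only $Z(0)^k=1$, hence $Z(0)\neq 0$; you then need the $\Z_k$-equivariance constraint on the exponents of $Z$ (they lie in the residue class $1-ai_0\bmod k$) to see that a nonzero constant term is permitted only when $a=1$. Both ingredients are present in your write-up, but the logical dependence should be made explicit. Second, your displayed divisibility in the sufficiency paragraph is not correct in either case. With $t=z^k$ one gets: for $i_0=0$, $Z=zW(z^k)$ and the condition is $(t-1)h(t)^r\mid tW(t)^k-1$; for $i_0=1$ (so $k\mid r$), $Z=W(z^k)$ and $f^r=(z^k)^{r/k}h(z^k)^r$, so the condition is $(t-1)\,t^{r/k}h(t)^r\mid W(t)^k-1$. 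This slip does not break the argument you actually run, since the realization step only requires that the prescribed ramification profile over $\{0,1,\infty\}$ satisfy Riemann--Hurwitz --- precisely the content of Thom's Lemma~\ref{lem:Thom} used in the paper.
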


The counterexamples we construct are closely related to polynomial Belyi maps, called \emph{Shabat polynomials}, which we use to give explicit formulas, see Section \ref{sec:Et_Cst}. Since the conjecture (even in the $\C^*$-equivariant version) fails for the surfaces $\tilde S(k,r)$, $k,r\geq 2$, which are the universal covers of $S(k,r,a)$, by taking a product with $\A^{n-1}$ we obtain the following important result. Up to our knowledge these are the first simply connected rational counterexamples in the literature (cf.\ Remark \ref{rem:first}).

\begin{cor} For every positive integer $n$ there exists a smooth affine variety $\A^n$-fibered over $\A^1$ (hence rational and of negative logarithmic Kodaira dimension) which is simply connected and for which the Generalized Jacobian Conjecture fails.
\end{cor}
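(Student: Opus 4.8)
The plan is to leverage Theorem~\ref{thm:GJC_fails} together with the easy stability of the relevant properties under taking products with affine spaces. First I would fix integers $k,r\geq 2$ (say $k=r=2$ for concreteness, but any choice works) and an admissible $a$, and consider the simply connected surface $\ti S(k,r)$, which is the universal cover of the pseudo-plane $S(k,r,a)$ as explained before the statement of Theorem~\ref{thm:GJC_fails}. By the last sentence of Theorem~\ref{thm:GJC_fails}, the $\C^*$-equivariant Generalized Jacobian Conjecture fails for every finite cover of $S(k,r,a)$, in particular for $\ti S(k,r)$ itself; so we obtain a non-proper (necessarily non-invertible) étale endomorphism $\varphi\colon \ti S(k,r)\to \ti S(k,r)$. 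Concretely one may quote the explicit Shabat-polynomial formulas of Section~\ref{sec:Et_Cst}, but this is not needed: mere existence suffices.

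Next, set $X=\ti S(k,r)\times \A^{n-1}$ for a given positive integer $n$ (with $X=\ti S(k,r)$ when $n=1$). I claim $X$ has all the asserted properties. Define $\Phi=\varphi\times \id_{\A^{n-1}}\colon X\to X$. This is étale, being a product of étale morphisms, and it is not proper: properness is preserved under base change and composition, so if $\Phi$ were proper then $\varphi$, obtained from $\Phi$ by restricting to a fiber $\ti S(k,r)\times\{\mathrm{pt}\}$ (equivalently by the base change $\{\mathrm{pt}\}\hookrightarrow\A^{n-1}$ followed by a closed immersion section), would be proper, contradiction. Hence the Generalized Jacobian Conjecture fails for $X$.

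It remains to check the three structural properties of $X$. Simple connectedness: $\ti S(k,r)$ is simply connected by Example~\ref{ex:tiS(k,r)}, $\A^{n-1}$ is simply connected, and the fundamental group of a product is the product of fundamental groups, so $\pi_1(X)=1$. The $\A^n$-fibration over $\A^1$: the surface $\ti S(k,r)=\{x^ry=z^k-1\}$ carries the $\A^1$-fibration given by the coordinate $x$ (its general fiber is $\{x=c\}\cong\Spec\C[y,z]/(c^ry-z^k+1)\cong\A^1$ for $c\neq 0$, an $\A^1$ since $y$ is then a function of $z$), and taking the product of this fibration with $\id_{\A^{n-1}}$ yields a morphism $X\to\A^1$ whose general fiber is $\A^1\times\A^{n-1}\cong\A^n$. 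Rationality and negative logarithmic Kodaira dimension then follow formally: a variety admitting an $\A^n$-fibration over $\A^1$ is rational (the total space is birational to $\A^1\times\A^n$) and has $\ol\kappa=-\infty$ (any variety dominated by, or containing a dense open isomorphic to a product with $\A^1$, has negative log Kodaira dimension; alternatively invoke the additivity/easy-addition estimates for $\ol\kappa$ as in Lemma~\ref{lem:etale-basics}).

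The only point requiring the slightest care — and the place I would be most careful in writing — is the non-properness of $\Phi$: one must make sure that restricting a product endomorphism to a fiber really does recover $\varphi$ and that this restriction is a base change of $\Phi$, so that properness of $\Phi$ would force properness of $\varphi$. Everything else is routine bookkeeping with products, and no new ideas beyond Theorem~\ref{thm:GJC_fails} and Example~\ref{ex:tiS(k,r)} are needed.
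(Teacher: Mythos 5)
Your proposal is correct and is essentially the paper's own (one-sentence) argument: the authors likewise take the simply connected universal cover $\ti S(k,r)$, on which the conjecture fails by Theorem~\ref{thm:GJC_fails} together with Lemma~\ref{lem:lifting_eta}, and multiply by $\A^{n-1}$, using the $\A^1$-fibration $p=\pr_x$ of Example~\ref{ex:tiS(k,r)} to get the $\A^n$-fibration over $\A^1$. All your verifications (étaleness and non-properness of $\varphi\times\id$, simple connectedness, rationality and $\ol\kappa=-\infty$) are sound; the only cosmetic slip is the stray pointer to Lemma~\ref{lem:etale-basics} for Kodaira-dimension estimates, which is offered only as an alternative and does not affect the argument.
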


\smallskip The counterexamples in Theorem \ref{thm:GJC_fails} are discrete in nature. However, we observe that the situation changes if we do not insist on $\C^*$-equivariance. The following result shows that, although pseudo-planes are arguably the surfaces most similar to the affine plane, the Generalized Jacobian Conjecture for many of them fails very strongly.

\begin{thmA}\label{thm:deformations} Let $\bar r\geq 1, k\geq 2$ and let $S=S(k,k\bar r,1)$ be a pseudo-plane as above. Then for every integer $N\geq 0$ there exist arbitrarily high-dimensional families of non-proper étale endomorphisms of $S$ of degree $k(N(\bar r k-1)+\bar r)$ such that all members of the family are different, even after dividing by the action of the automorphism group of $S$ by left and right composition.
\end{thmA}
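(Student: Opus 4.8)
The plan is to build the endomorphisms explicitly on the level of the $\A^1$-fibration over $\A^1$ that $S=S(k,k\bar r,1)$ carries (Proposition \ref{lem:Qhp_structure}), and then to deform them in the "horizontal" direction using the extra freedom that appears once $\C^*$-equivariance is dropped. First I would recall the concrete model: writing $S=\{x^{k\bar r}y=z^k-1\}$, say, or rather using its description as a quotient of $\tilde S(k,k\bar r)$, one has the natural $\A^1$-fibration $\pi\colon S\to \A^1$ induced by the coordinate on which the $\C^*$-action has weight $0$; its unique degenerate fibre sits over the origin and the general fibre is a reduced $\A^1$. The $\C^*$-equivariant étale endomorphisms produced in Theorem \ref{thm:GJC_fails} (equivalently Theorem \ref{thm:GJC_fails_v2}) act on the base $\A^1$ by $t\mapsto t^{d}$ for the appropriate admissible $d$ and are given in coordinates by Shabat-polynomial formulas; for $S(k,k\bar r,1)$ the admissible degrees include $d=k(N(\bar r k-1)+\bar r)$ for every $N\geq 0$, which is exactly the degree appearing in the statement, coming from the congruence $d\equiv r \bmod k(r-1)$ with $r=k\bar r$. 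So a distinguished member of each family is already in hand; the point is to produce a positive-dimensional family around it.

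The key step is the deformation. The idea is that an étale endomorphism $\Phi$ of $S$ lifting (over a fixed base map $t\mapsto t^d$) need not commute with the $\C^*$-action, and one may twist it by adding to the fibre coordinate a polynomial in the base coordinate $t$ with a controlled vanishing order at $t=0$, so that étaleness across the degenerate fibre is preserved. Concretely I would look for endomorphisms of the form $\Phi_{\underline c}$ obtained from the equivariant one by a substitution $z\mapsto z + t^{m}\,g_{\underline c}(t)$ (and a compensating change in $y$ forced by the equation $x^{k\bar r}y=z^k-1$), where $m$ is chosen large enough — roughly, at least the multiplicity with which the equivariant map ramifies over the degenerate fibre — and $g_{\underline c}$ ranges over polynomials of degree up to some bound growing with $N$. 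The Jacobian of $\Phi_{\underline c}$ differs from that of the equivariant map by terms divisible by $t^{m}$, so on the complement of $\pi^{-1}(0)$ étaleness is clear, and near $\pi^{-1}(0)$ one checks it survives because the leading behaviour is unchanged; this is where the precise value of $m$ and a short local computation in the completed local ring at a point of the degenerate fibre must be done carefully. The number of free parameters $\underline c$ is the dimension of the family, and it can be made arbitrarily large by increasing $N$ (hence $d$), which yields the "arbitrarily high-dimensional" clause.

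It remains to see that the members stay pairwise distinct modulo the $(\Aut S\times \Aut S)$-action by pre- and post-composition. Here I would use that $\Aut S$ is small and well understood for these pseudo-planes — by Masuda–Miyanishi the $\A^1$-fibration $\pi$ is unique, so every automorphism of $S$ preserves $\pi$ up to an automorphism of the base $\A^1$ fixing $0$, i.e.\ acts on the base by $t\mapsto \alpha t$, and on fibres in a correspondingly restricted way (an affine-linear group of bounded dimension, the $\C^*$ plus possibly finite pieces). Composing $\Phi_{\underline c}$ on either side by such automorphisms changes $g_{\underline c}$ only by the action of a finite-dimensional group of bounded dimension (rescalings of $t$ and of $z$, plus the translations already built into the normal form); hence the orbits of the $(\Aut S\times\Aut S)$-action inside the parameter space have dimension bounded independently of $N$, while the parameter space itself has dimension $\to\infty$. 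Therefore for $N$ large the family surjects onto infinitely many distinct double-coset classes, and after restricting to a generic linear subspace of parameters of any prescribed dimension we get the asserted families. The main obstacle I anticipate is the local étaleness check across the unique degenerate fibre: one must pin down the exact threshold for $m$ and verify that the twist does not create a vertical component of ramification there — the rest is bookkeeping with the explicit Shabat-polynomial formulas and the known structure of $\Aut S$.
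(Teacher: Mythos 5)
Your proposal correctly identifies the degree $d=k(N(\bar rk-1)+\bar r)$ from the congruence in Theorem \ref{thm:GJC_fails}, but the two load-bearing steps are not the ones you describe, and both of your replacements break down. First, the deformation mechanism. Your twist $z\mapsto z+t^mg_{\underline c}(t)$ faces a dichotomy you do not resolve: if the twist is (pre- or post-composition by) an automorphism of $S$, the whole family collapses to a single class in $\EC(S)$; if it is not, you owe the local \'etaleness check along the degenerate fibre, which you defer and which is the hard part. The paper's idea, absent from your proposal, is that since $k\mid d$ the endomorphism factors through the universal cover as $\eta=\pi\circ j_\eta$ (Lemma \ref{lem:k|r}), and one inserts the triangular automorphisms $\Theta^{F(\mathbf a)}$ of $\widetilde S(k,\bar rk)$ \emph{in the middle}: $\eta^{\mathbf a}=\pi\circ\Theta^{F(\mathbf a)}\circ j_\eta$. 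These are genuine automorphisms of $\widetilde S$ that do \emph{not} descend to $S$, so \'etaleness is automatic (no computation near the degenerate fibre is needed), yet the family is nontrivial downstairs. Also note that your plan to work "on the level of the $\A^1$-fibration" cannot produce non-proper examples at all: by Corollary \ref{cor:Et(S,p)-for_A1-fib_pseudo-planes} any \'etale endomorphism of a pseudo-plane respecting an $\A^1$-fibration is an automorphism; the relevant fibration is the $\A^1_*$-quotient of the $\C^*$-action, whose base map is a Shabat polynomial, not $t\mapsto t^d$.

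Second, the distinctness argument. Your claim that $\Aut(S)$ is "an affine-linear group of bounded dimension, the $\C^*$ plus possibly finite pieces" is false: by \cite[4.7]{MiyMa-hp_with_torus_actions} it contains the infinite-dimensional unipotent family $(u,v,w)\mapsto(\lambda^ku,\ldots,\lambda w+u^{\bar r+1}Q(u))$ with $Q\in\C[u]$ arbitrary. Hence "orbits of bounded dimension versus parameter space of growing dimension" is not available, and a generic choice of deformation parameters could well be absorbed by this unipotent part. The paper's Proposition \ref{prop:deformations} instead makes an exact computation: applying $\eta_1^*\circ\alpha_1^*=\alpha_2^*\circ\eta_2^*$ to $u$ and $w$ reduces the question to the identity $\lambda_2F_1(T)+T^{k-1}Q_1(T^k)=\lambda_2^{r+1}F_2(\lambda_2T)$, and the conclusion $Q_1=0$, $F_1=F_2$ holds only because the parameters are deliberately placed in $F(\mathbf a)\in 1+x^r\C[x^r]\subset\C[x^k]$, whose monomial degrees are $\equiv 0\pmod k$, while the $\Aut(S)$-contribution $T^{k-1}Q_1(T^k)$ lives in degrees $\equiv-1\pmod k$. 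Without this monomial separation the family would not remain injective into $\EC(S)$, so the normal form of the deformation is essential, not bookkeeping.
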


We note that the surface $S(k,\bar r k,1)$ is isomorphic to the hypersurface $u(1+u^{\bar r}v)=w^k$ in $\A^3$ and all counter-examples are given by explicit formulas. For instance, for the pseudo-plane $S(2,2,1)\cong \{u(1+uv)=w^2\}$, which has fundamental group $\Z_2$, the formulas for the families from Theorem \ref{thm:deformations} are written in Example \ref{ex:S_2-defromations_explicit}. We hope that our counterexamples could serve as a testing area for other researchers interested in the Jacobian Conjecture.

\medskip 
Finally, we note that boundaries of minimal log smooth completions of our counterexamples (which are rational trees, due to $\Q$-acyclicity) all contain a branching component, i.e.\ they are not chains. The $\Q$-homology planes which admit a completion by a chain of rational curves constitute the most important remaining class to study. By \cite[Theorem 3.4]{MasMiy-Cplus_actions_on_Qhp} and \cite{Gizatullin_quasihom_aff_surf} such surfaces are isomorphic to $S(k,1,a)$ for some $k\geq 1$ and $a\in \{1,\ldots,k-1\}$ coprime with $k$. In particular, Theorem \ref{thm:(G,S)=(Cst,S(kra))} implies that they admit no $G$-equivariant counterexamples for infinite groups $G$. Via Lemma \ref{lem:lifting_eta} it is enough to study the question for their universal covers. 

\begin{question}Does the Jacobian Conjecture hold for $\wt S(k,1)=\{xy=z^k-1\}$, $k\geq 1$?
\end{question}
\noindent Note that $\wt S(k,1)$ is a smooth simply connected surface admitting an $\A^1$-fibration over $\A^1$ (hence rational and of negative logarithmic Kodaira dimension) and that $\wt S(1,1)\cong \A^2$.
 
\tableofcontents 

\medskip
\section{Preliminaries}

We work with complex algebraic varieties.  

\subsection{Branched covers of curves.}\label{rem:Shabat_vs_Gal}

By a curve we mean an irreducible reduced variety of dimension one. Recall that, as a consequence of the Riemann-Hurwitz formula, the Generalized Jacobian Conjecture holds for curves (see \cite[Lemma 1]{Miyanishi-etale_endom}).

\begin{lem}[GJC for curves]\label{prop:GJC(1)_holds} 
Every dominant morphism between curves of equal topological Euler characteristic is finite. In particular, the Generalized Jacobian Conjecture holds for curves.
\end{lem}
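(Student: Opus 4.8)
The plan is to combine quasi-finiteness, Zariski's Main Theorem and the Riemann--Hurwitz formula; write $\chi$ for the topological Euler characteristic and let $f\colon X\to Y$ be the given morphism. First I would note that $f$ is quasi-finite: each fibre is a closed subset of the irreducible curve $X$ which is not all of $X$ (otherwise $f$ would be constant, contradicting dominance), hence is $0$-dimensional, so every fibre is finite; moreover $f$ is generically finite of degree $d:=[\C(X):\C(Y)]\geq 1$. By Grothendieck's form of Zariski's Main Theorem, $f$ factors as an open immersion $X\hookrightarrow\bar X$ into a curve $\bar X$ followed by a finite morphism $\bar f\colon\bar X\to Y$, which is surjective because $f$ is dominant. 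Now $f$ is finite if and only if the open immersion $X\hookrightarrow\bar X$ is an isomorphism, i.e.\ if and only if the finite set $\bar X\setminus X$ is empty; and since $\chi(X)=\chi(\bar X)-\#(\bar X\setminus X)$ by additivity of $\chi$, the hypothesis $\chi(X)=\chi(Y)$ gives $\#(\bar X\setminus X)=\chi(\bar X)-\chi(Y)\geq 0$. Hence it suffices to prove the reverse inequality $\chi(\bar X)\leq\chi(Y)$.

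To prove $\chi(\bar X)\leq\chi(Y)$ for the finite surjective morphism $\bar f$, I would pass to smooth projective models. The inclusion $\C(Y)\subseteq\C(X)$ induces a degree-$d$ finite morphism $\hat f\colon\hat X\to\hat Y$ of smooth projective curves, and the Riemann--Hurwitz formula (we work over $\C$) yields
\[
\chi(\hat X)=d\,\chi(\hat Y)-\deg R,\qquad \deg R=\sum_{p\in\hat X}(e_p-1)\geq 0 .
\]
Writing $\chi(\bar X)$ and $\chi(Y)$ as the Euler characteristic of the respective smooth projective model minus the nonnegative contributions of the points over its complement and of the identifications at the non-unibranch singular points, and using that $\bar f$ finite surjective makes the points of $\hat X$ over $\hat Y\setminus Y$ at least as numerous as those of $\hat Y\setminus Y$, the inequality $\chi(\bar X)\leq\chi(Y)$ follows from $(d-1)\chi(\hat Y)\leq\deg R$ provided $Y$ is smooth---which is exactly the case relevant to the Generalized Jacobian Conjecture. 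Finally $(d-1)\chi(\hat Y)\leq\deg R$ is immediate: if $\hat Y$ has positive genus the left-hand side is $\leq 0$, while if $\hat Y\cong\P^1$ then $\deg R=2d-\chi(\hat X)\geq 2d-2=(d-1)\chi(\hat Y)$.

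For the last assertion, an étale endomorphism $h\colon X\to X$ of a curve is flat and of finite type, hence open, hence dominant since $X$ is irreducible; as $\chi(X)=\chi(X)$ holds trivially, the first part shows that $h$ is finite, in particular proper, so the Generalized Jacobian Conjecture holds for curves.

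I expect the main obstacle to be the second paragraph: the bookkeeping that relates $\chi$ of the possibly singular, possibly non-complete curves $\bar X$ and $Y$ to $\chi$ of their smooth projective models and signs all correction terms correctly. In particular the singularities of the target must be controlled---only when $Y$ is smooth do the corrections combine so that a finite surjective morphism onto $Y$ cannot raise $\chi$---and the sharp point of the resulting Riemann--Hurwitz estimate is the genus-zero case, where one needs $\deg R\geq 2(d-1)$.
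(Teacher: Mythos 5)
Your proof is correct in the setting that matters, but note that the paper itself does not prove this lemma at all: it simply cites \cite[Lemma 1]{Miyanishi-etale_endom}, remarking only that the statement is ``a consequence of the Riemann--Hurwitz formula.'' Your argument is therefore a genuine reconstruction rather than a parallel of the paper's proof, and it is organized the right way: Zariski's Main Theorem to compactify the quasi-finite map as $X\hookrightarrow\bar X\xrightarrow{\bar f}Y$ with $\bar f$ finite, additivity of $\chi$ to reduce finiteness to the inequality $\chi(\bar X)\leq\chi(Y)$, and Riemann--Hurwitz on smooth projective models to prove that inequality (the genus-zero case being exactly $\chi(\hat X)\leq 2$). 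This is essentially the argument the paper is gesturing at, made explicit.

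The caveat you flag is real and worth stating more forcefully: with the paper's definition of a curve (irreducible and reduced, but possibly singular), the first sentence of the lemma is actually \emph{false} for singular targets. For example, let $Y$ be a projective nodal cubic, so $\chi(Y)=1$, let $\nu\:\P^1\to Y$ be the normalization, and let $X=\P^1\setminus\{p\}\cong\A^1$ for a point $p$ not lying over the node; then $\nu|_X\:X\to Y$ is dominant, $\chi(X)=1=\chi(Y)$, but the image $Y\setminus\{\nu(p)\}$ is not closed, so the map is not finite. The obstruction is precisely the one your bookkeeping isolates: a non-unibranch singularity of $Y$ lowers $\chi(Y)$ without being ``seen'' by a map factoring through the normalization. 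So the restriction to smooth $Y$ in your second paragraph is not a gap in your proof but a necessary hypothesis missing from the statement; since every application in the paper (and the Generalized Jacobian Conjecture itself) concerns smooth curves, your proof covers everything that is actually used.
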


Let $\varphi\:X\to Y$ be a finite morphism of smooth curves and let $y\in Y$. If $\varphi^*\{y\}=\sum_{j=1}^k e_j\{x_j\}$, where $e_j\geq e_{j'}$ for every $j\leq j'$ and where all $x_j$ are pairwise distinct, then we call the sequence $\lambda=(e_1,\ldots,e_k)$ the \emph{ramification profile} of $y$. If $y_i$ for $i=1,\ldots, n$ is a numeration of all branching points of $\varphi$ and $\lambda_i$, $i=1,\ldots, n$ are their ramification profiles then we call $(\lambda_1,\ldots,\lambda_n)$ the \emph{ramification profile} of $\varphi$ (over the ordered branching locus $(y_1,\ldots,y_n)$).

\begin{lem}[Branched self-covers of $\A^1$; \cite{Thom_65}]\label{lem:Thom} 
For a positive integer $n$ let $\Lambda=(\lambda_{1},\ldots,\lambda_{n})$, where $\lambda_i=(\lambda_{i,1},\ldots,\lambda_{i,k_i})$, $i=1,\ldots,n$ are non-increasing sequences of positive integers with $\lambda_{i,1}\geq 1$, and let $y_1,\ldots,y_k$ be distinct points in $\A^1$. Then a finite morphism $\varphi\:\A^1\to \A^1$ with ramification profile $\Lambda$ and branching locus $(y_1,\ldots,y_n)$ exists if and only if there exists a positive integer $d$, such that the following conditions hold:
\begin{enumerate}
	\item $\lambda_{i,1}+\ldots+\lambda_{i,k_i}=d$ for every $i=1,\ldots,n$.
	\item $k_1+k_2+\ldots+k_n=(n-1)d+1$.
\end{enumerate}
\end{lem}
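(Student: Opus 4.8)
For the implication ``$\Rightarrow$'' I would simply unwind the definitions. A finite morphism $\varphi\colon\A^1\to\A^1$ is a non-constant polynomial map of some degree $d$, so the local degrees over any point of $\A^1$ sum to $d$, which is condition (1). Extending to $\bar\varphi\colon\P^1\to\P^1$ one has $\bar\varphi^{-1}(\infty)=\{\infty\}$ with ramification index $d$ at that point, the branch points of $\bar\varphi$ lie in $\{y_1,\dots,y_n,\infty\}$, and the Riemann--Hurwitz formula reads $-2=-2d+\sum_{i=1}^n(d-k_i)+(d-1)$, which rearranges to condition (2).

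For ``$\Leftarrow$'' the plan is to use Riemann's existence theorem to reduce to a statement in the symmetric group and then settle that combinatorially. Fix $d$ satisfying (1) and (2); note that (2) is equivalent to $\sum_{i=1}^n(d-k_i)=d-1$. It suffices to produce permutations $\sigma_1,\dots,\sigma_n\in S_d$ with $\sigma_i$ of cycle type $\lambda_i$ and with $\sigma_1\cdots\sigma_n$ a $d$-cycle: then $\langle\sigma_1,\dots,\sigma_n\rangle$ is transitive, so these monodromy data define a connected degree-$d$ cover $C\to\P^1$ branched inside $\{y_1,\dots,y_n,\infty\}$, with ramification profile $\lambda_i$ over $y_i$ and with $\infty$ totally ramified, and Riemann--Hurwitz together with (2) forces $g(C)=0$; hence $C\cong\P^1$, and deleting the unique point of $C$ over $\infty$ yields the required finite morphism $\A^1\to\A^1$.

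To build the $\sigma_i$ I would use the elementary fact that if transpositions $t_1,\dots,t_m$ form, as edges, a forest on $\{1,\dots,d\}$, then the product $t_1\cdots t_m$ (in any order) has cycle type equal to the tuple of vertex-sizes of the components of that forest --- in particular the $d-1$ edges of a spanning tree multiply to a $d$-cycle. It is therefore enough to find a spanning tree $T$ on $\{1,\dots,d\}$ and a partition $E(T)=E_1\sqcup\cdots\sqcup E_n$ of its edge set such that, for each $i$, the components of $([d],E_i)$ have sizes $\lambda_{i,1},\dots,\lambda_{i,k_i}$; the edge counts then automatically give $|E_i|=d-k_i$, consistent with $|E(T)|=d-1$. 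I would construct $(T,E_1,\dots,E_n)$ by induction on $d$: choosing the blocks of the partitions suitably, one locates a vertex $v$ lying in a block of size $\geq 2$ of exactly one partition $\lambda_{i_0}$ and in a singleton block of every other $\lambda_i$, makes $v$ a leaf of $T$ joined by an edge placed into $E_{i_0}$, and recurses on $\{1,\dots,d\}\setminus\{v\}$ with $\lambda_{i_0}$ replaced by the partition obtained by shrinking the block of $v$ by one and each $\lambda_i$ ($i\neq i_0$) by deleting the singleton $\{v\}$. Here the numerical hypothesis $\sum(d-k_i)=d-1$ is exactly what guarantees that at each stage at most one partition has no part equal to $1$ (such a partition has $d-k_i\geq\lceil d/2\rceil$, so two of them would already overshoot $d-1$), which is what makes a suitable vertex $v$ available throughout.

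The main obstacle is this last inductive construction of the spanning tree together with its edge-partition; everything else --- Riemann's existence theorem, the genus computation, the forest/transposition identity --- is routine. An alternative for the group-theoretic step would be to induct on $n$ directly inside $S_d$, collapsing the last two factors into one of cycle type having $k_{n-1}+k_n-d$ parts; this rests on the ``minimal factorization'' fact that any two partitions of $d$ with $(d-k_{n-1})+(d-k_n)\leq d-1$ admit representatives whose product has the maximal possible number of cycles, which carries the same combinatorial content.
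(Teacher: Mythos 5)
Your proposal is correct, and it does strictly more than the paper: the paper only verifies the necessity of (1) and (2) (via exactly your Riemann--Hurwitz computation for the extension $\P^1\to\P^1$, totally ramified over $\infty$) and delegates the existence direction entirely to the citation of Thom. Your route to sufficiency --- Riemann's existence theorem reducing the problem to finding $\sigma_1,\dots,\sigma_n\in S_d$ of cycle types $\lambda_1,\dots,\lambda_n$ whose product is a $d$-cycle, then realizing these via a spanning tree on $\{1,\dots,d\}$ whose edge set is partitioned into forests with prescribed component sizes --- is a complete and standard argument for this ``minimal transitive factorization'' situation. The two points that carry the weight both check out: (i) at most one of the partitions can lack a part equal to $1$, since a partition with all parts $\geq 2$ has $d-k_i\geq\lceil d/2\rceil$ and two such would violate $\sum_i(d-k_i)=d-1$, so the leaf vertex $v$ needed for the induction always exists and the numerical invariant $\sum_i(d-k_i')=(d-1)-1$ is preserved after deleting it; (ii) the product, in any order, of the transpositions attached to the edges of a forest has cycles exactly the vertex sets of the components (provable by the same leaf-deletion induction), which simultaneously gives each $\sigma_i$ the right cycle type and makes $\sigma_1\cdots\sigma_n$ a $d$-cycle, hence transitive, hence the cover connected and, by the genus count forced by (2), of genus zero. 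One cosmetic remark: as stated the lemma's hypothesis $\lambda_{i,1}\geq 1$ is vacuous and is surely meant to read $\lambda_{i,1}\geq 2$ (each $y_i$ is genuinely a branch point); your construction is indifferent to this, but for the branching locus to be exactly $(y_1,\dots,y_n)$ rather than merely contained in it, that stronger hypothesis is what one should invoke at the end.
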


The necessity of conditions is easy to see. The first one is satisfied with $d=\deg \varphi$ as a consequence of the finiteness of $\varphi$. Since $e_{top}(\A^1)=1$, the second one follows from the Riemann-Hurwitz formula: $$1=d\cdot 1-\sum_{i,j}(\lambda_{i,j}-1)=d-\sum_i (d-k_i)=\sum_i k_i-(n-1)d.$$

If $X\to \P^1$ is a branched cover with at most three branching points, say $0,1,\8$, then the inverse image of the interval $[0,1]$ cuts $X$ into topologically contractible pieces and has a natural structure of a bi-colored graph with vertices being inverse images of $0,1$ and with adjacent vertices of different colors. A graph of the latter type is called a \emph{dessin on $X$}, see \cite[\S 4]{GirondoGonzalez-Riemann_surf_and_DESSIN} for details.

\begin{prop}[Dessin d'enfant, \cite{GirondoGonzalez-Riemann_surf_and_DESSIN}, Proposition 4.20]\label{thm:dessin_correspondence} Let $X$ be a smooth projective curve. The above assignment induces a bijection between isomorphy classes of branched coverings $X\to \P^1$ with at most three ordered branching points and classes of dessins on $X$ modulo orientation-preserving homeomorphism of $X$ whose restriction induces an isomorphism between the bi-colored graphs.
\end{prop}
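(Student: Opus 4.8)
The plan is to factor the asserted bijection through a third, purely combinatorial object, namely pairs of permutations (``constellations''), and to invoke Riemann's existence theorem as the analytic input. Fix $0,1,\infty\in\P^1$ together with a base point $*\in(0,1)\subset\P^1\setminus\{0,1,\infty\}$, and recall that $\pi_1(\P^1\setminus\{0,1,\infty\},*)$ is free on the loops $\gamma_0,\gamma_1$ around $0$ and $1$, with $\gamma_0\gamma_1\gamma_\infty=1$ for a suitable loop $\gamma_\infty$ around $\infty$. For a degree $d$ branched covering $\varphi\:X\to\P^1$ with branch locus contained in $\{0,1,\infty\}$, the restriction over $\P^1\setminus\{0,1,\infty\}$ is a finite unramified topological covering, hence by covering space theory it is determined up to isomorphism by the monodromy action of $\pi_1$ on the fibre $\varphi^{-1}(*)$, i.e.\ by the pair $(\sigma_0,\sigma_1)$ of permutations of the set $\varphi^{-1}(*)$ induced by $\gamma_0,\gamma_1$, taken up to simultaneous conjugation; connectedness of $X$ corresponds to transitivity of the subgroup $\langle\sigma_0,\sigma_1\rangle$. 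Conversely, every finite unramified covering of $\P^1\setminus\{0,1,\infty\}$ extends, over a small punctured disc around each of $0,1,\infty$, to a disjoint union of maps $z\mapsto z^{e}$, so one fills in the missing points to obtain a branched covering of $\P^1$; the complex structure making $\varphi$ holomorphic is unique, and isomorphisms of coverings over the punctured base extend uniquely to the completions. This is Riemann's existence theorem, and it yields a bijection between isomorphy classes of such branched coverings and simultaneous conjugacy classes of pairs $(\sigma_0,\sigma_1)$ generating a transitive group of permutations of a $d$-element set.

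Next I would match pairs of permutations with dessins. Given a dessin $\mathcal D\subset X$ — a connected bi-colored graph embedded in the closed oriented surface $X$ whose complement is a union of open discs — label its edges by a set $E$. The orientation of $X$ induces at each black vertex a cyclic order on the incident edges; let $\sigma_0$ be the permutation of $E$ sending each edge to its successor in this cyclic order, and define $\sigma_1$ analogously at the white vertices. Then the black vertices, white vertices and faces of $\mathcal D$ correspond bijectively to the cycles of $\sigma_0$, $\sigma_1$ and $\sigma_\infty:=(\sigma_0\sigma_1)^{-1}$ respectively, so $V-E+F=2-2g(X)$ recovers the genus, and connectedness of $\mathcal D$ is equivalent to transitivity of $\langle\sigma_0,\sigma_1\rangle$. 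Conversely, a transitive pair $(\sigma_0,\sigma_1)$ reconstructs a ribbon graph (with $|E|$ edges, black vertices the cycles of $\sigma_0$, white vertices the cycles of $\sigma_1$, and the prescribed cyclic orders), which has a canonical embedding — unique up to orientation-preserving homeomorphism — into the closed oriented surface obtained by glueing a disc along each cycle of $\sigma_\infty$. One checks directly that an orientation-preserving homeomorphism carrying one dessin to another by a bi-colored graph isomorphism is precisely the datum of a bijection of edge sets conjugating one pair $(\sigma_0,\sigma_1)$ to the other; hence isomorphy classes of degree $d$ dessins correspond bijectively to simultaneous conjugacy classes of transitive pairs of permutations of a $d$-element set.

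Finally I would check that the two constructions assemble into the assignment of the statement and that it is the claimed bijection. For a branched covering $\varphi$, the preimage $\varphi^{-1}([0,1])$ is a finite graph in $X$, since over $(0,1)$ it is a disjoint union of open arcs and near a preimage of $0$ or $1$ of local degree $e$ it is $e$ half-open arcs meeting at a point; its complement is a disjoint union of cells, each mapping to the disc $\P^1\setminus[0,1]$ as a branched cover ramified at most over $\infty$, hence each a disc containing exactly one point of $\varphi^{-1}(\infty)$, so $\varphi^{-1}([0,1])$ is a dessin with black vertices $\varphi^{-1}(0)$ and white vertices $\varphi^{-1}(1)$. Identifying $\varphi^{-1}(*)$ with the edge set, the monodromy permutations $\sigma_0,\sigma_1$ of the covering coincide with the rotation permutations of this dessin, because $\gamma_0$ is homotopic rel.\ $*$ to a loop that runs along $[0,1]$ to a chosen preimage of $0$, encircles it once, and returns, thereby tracing exactly the local cyclic order of edges at that black vertex; likewise for $\gamma_1$. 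Thus (branched covering)$\,\mapsto\,$(dessin) and (dessin)$\,\mapsto\,$(permutation pair)$\,\mapsto\,$(branched covering) are mutually inverse on isomorphy classes, which proves the proposition. The main obstacle is this last paragraph: the careful bookkeeping showing that the local monodromy around each of $0,1,\infty$ is faithfully recorded by the cyclic order of edges (resp.\ faces) of $\varphi^{-1}([0,1])$, and that the equivalence ``orientation-preserving homeomorphism of $X$ inducing a bi-colored graph isomorphism'' matches simultaneous conjugacy exactly — together with the extension-and-uniqueness statement for branched covers over the punctured base, which is where Riemann's existence theorem does the substantive work.
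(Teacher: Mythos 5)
The paper does not prove this proposition; it is quoted verbatim from the cited reference (Girondo--Gonz\'alez, Proposition 4.20), and your argument is essentially the standard proof given there: factor the correspondence through transitive pairs of permutations up to simultaneous conjugation, using Riemann's existence theorem on one side and the ribbon-graph/cell-decomposition dictionary on the other, then check that the composite agrees with $\varphi\mapsto\varphi^{-1}([0,1])$ by computing the monodromy of $\gamma_0,\gamma_1$ along lifts of $[0,1]$. The argument is sound; the only point worth flagging is that for a curve $X$ of positive genus the dessin reconstructs a \emph{specific} complex structure on the underlying oriented surface, so the statement must be read (as in the reference) with the source of the branched covering determined by the dessin rather than fixed in advance --- a subtlety that is immaterial here, since the paper only applies the proposition with $X=\P^1$, where the complex structure is unique.
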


We will be especially interested in branched self-covers of $\A^1$ with at most $2$ branching points. In this case $X=\P^1$ and the equivalence of branched covering is simply an automorphism of the source $\P^1$ fixing $\8$, hence an affine function. Such branched self-covers correspond to the so-called \emph{Belyi-Shabat polynomials}, which are polynomials with at most two critical values (values at points in which the differential vanishes). These are extensively studied, in particular  because the absolute Galois group $\operatorname{Gal}(\bar \Q/\Q)$ acts faithfully on them (see \cite[Theorem 4.49]{GirondoGonzalez-Riemann_surf_and_DESSIN}), and hence on planar bi-colored graphs (cf.\ Proposition \ref{thm:dessin_correspondence}). Some of the simplest Belyi-Shabat polynomials are the well-known Chebyshev polynomials of the first and second kind. We call a critical point \emph{non-degenerate} if its ramification index equals $2$.

\begin{example}[Chebyshev polynomials]\label{ex:Chebyshev}
The $n$-th \emph{Chebyshev polynomial of the first kind} $T_n(x)\in\mathbb{C}[x]$ is defined by 
\[ T_n(x)=2xT_{n-1}(x)-T_{n-2}(x) \]
where $T_0=1$ and $T_1(x)=x$.  It is the unique polynomial satisfying $T_n(\cos x)=\cos nx$. It has non-degenerate critical points ($T_n''(x_0)\neq 0$ whenever $T_n'(x_0)=0$), all contained in the interval $(-1,1)$. Its critical values are contained in $\{-1,1\}$. It follows that $T_n(-x)=(-1)^nT_n(x)$ and $T_n(1)=1$. The first few Chebyshev polynomials are $T_2(x)=2x^2-1$, $T_3(x)=4x^3-3x$, $T_4(x)=8x^4-8x^2+1$, $T_5(x)=16x^5-20x^3+5x$.

The rescaled derivatives $U_{n-1}(x):=\frac{1}{n}\frac{d}{dx}T_{n}(x)$, $n\geq 0$ are the \emph{Chebyshev polynomials of the second kind}. They are the unique polynomials satisfying $U_{n-1}(\cos x)=\sin nx/\sin x$. It follows that $U_{n-1}(\pm 1)=(\pm 1)^{n-1}n$. 

Chebyshev polynomials of the first and second kind satisfy the relation
\begin{equation}\label{eq:Tn-Un_square_relation} T_n^2(x)-1=(x^2-1)U_{n-1}^2(x).
\end{equation}
\end{example}

\begin{lem}[Characterization of Chebyshev polynomials]\label{lem:Chebyshev_characterization}
Let $P(x)\in\mathbb{C}[x]$ be a polynomial with non-degenerate critical points and with $\pm 1$ as the only critical values. Assume that $\pm 1$ are not critical points, that $P(1)=1$ and that $P(-1)^2=1$. Then $P$ is a Chebyshev polynomial of the first kind.
\end{lem}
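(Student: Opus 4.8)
The plan is to force the factorization $P^{2}-1=(x^{2}-1)R(x)^{2}$ for a polynomial $R$, thereby reducing the statement to solving a Pell-type equation over $\C[x]$, which one reads off from the unit group of the coordinate ring of the affine conic $\{y^{2}=x^{2}-1\}$. The normalizations $P(1)=1$ and $P(-1)^{2}=1$ are then used only at the very end to pin down $P$ among $\pm T_{n}$.

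First I would do elementary bookkeeping with the roots of $P^{2}-1=(P-1)(P+1)$. Put $n=\deg P$. Non-degeneracy of the critical points means every root of $P'$ is simple, so $P'$ has exactly $n-1$ distinct roots. A point $x_{0}$ is a multiple root of $P^{2}-1$ if and only if it is a multiple root of $P-1$ or of $P+1$ (it cannot be a root of both, their difference being the constant $2$), i.e.\ if and only if $P(x_{0})\in\{1,-1\}$ and $P'(x_{0})=0$; and by non-degeneracy such an $x_{0}$ then has multiplicity exactly $2$. Hence the multiple roots of $P^{2}-1$ are precisely the $n-1$ critical points, each double, and a degree count $2n=2(n-1)+\#\{\text{simple roots}\}$ shows $P^{2}-1$ has exactly two simple roots. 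Since $P(1)^{2}=P(-1)^{2}=1$, both $1$ and $-1$ are roots of $P^{2}-1$, and as neither is a critical point of $P$ they are simple, so they are exactly the two simple roots. Comparing leading coefficients gives
\[
P^{2}-1=(x^{2}-1)\,R(x)^{2},\qquad R=a_{n}\prod_{x_{0}}(x-x_{0}),
\]
where $a_{n}$ is the leading coefficient of $P$ and the product runs over the critical points of $P$; in particular $\deg R=n-1$.

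Next I would solve $P^{2}-(x^{2}-1)R^{2}=1$. In $A=\C[x,y]/(y^{2}-x^{2}+1)$ put $u=x+y$, $v=x-y$; since $y^{2}-x^{2}+1=1-uv$ we get $A=\C[u,v]/(uv-1)\cong\C[u,u^{-1}]$ with $x=\tfrac12(u+u^{-1})$, $y=\tfrac12(u-u^{-1})$. The identity above reads $(P+Ry)(P-Ry)=1$ in $A$, so $P+Ry$ is a unit and hence $P+Ry=\lambda u^{m}$ for some $\lambda\in\C^{*}$, $m\in\Z$. Applying the involution $y\mapsto-y$ (which fixes $P$ and $R$ and sends $u\mapsto u^{-1}$) gives $P-Ry=\lambda u^{-m}$, and multiplying the two forces $\lambda^{2}=1$. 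Therefore
\[
P=\tfrac12\bigl((P+Ry)+(P-Ry)\bigr)=\pm\tfrac12\bigl(u^{m}+u^{-m}\bigr)=\pm\,T_{|m|}(x),
\]
the last step being the polynomial identity $\tfrac12(u^{m}+u^{-m})=T_{|m|}\bigl(\tfrac12(u+u^{-1})\bigr)$, immediate from $T_{|m|}(\cos\theta)=\cos m\theta$. Comparing degrees gives $|m|=n$, so $P=\pm T_{n}$, and then $P(1)=1=T_{n}(1)$ excludes the sign $-1$, leaving $P=T_{n}$.

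The only delicate point I anticipate is the multiplicity bookkeeping in the second paragraph: one must check that no extra multiple root is overlooked (handled by the characterization of multiple roots of $P^{2}-1$ as critical points of $P$, each of multiplicity exactly $2$) and that all three hypotheses $P(1)=1$, $P(-1)^{2}=1$ and ``$\pm1$ not critical'' genuinely enter in pinning the two simple roots down to be $1$ and $-1$ — dropping $P(-1)^{2}=1$ already allows the non-Chebyshev polynomial $2x-1$. The Pell step is then formal once one knows that the units of $\C[u,u^{-1}]$ are the $\lambda u^{m}$, and the low-degree cases $\deg P\le1$ are covered by the same argument using $T_{0}=1$, $T_{1}=x$. (Alternatively one could argue via Proposition~\ref{thm:dessin_correspondence}: after an affine change of the target sending $\{-1,1\}$ to $\{0,1\}$, $P$ becomes a Shabat polynomial whose dessin is a bicolored tree, non-degeneracy forces every vertex to have degree $\le2$ so the tree is a path, and the unique path with $n$ edges is realized by a Chebyshev polynomial; the normalization then yields $P=T_{n}$.)
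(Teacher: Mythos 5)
Your proof is correct, but it takes a genuinely different route from the paper. The paper's proof is the argument you relegate to your final parenthetical: it views $P$ as a branched self-cover of $\P^1$, observes that non-degeneracy forces the associated bi-colored graph to be a chain, and then invokes the dessin d'enfant correspondence (Proposition~\ref{thm:dessin_correspondence}) to conclude $\pm P(x)=T_n(ax+b)$, after which the normalizations pin down $a=1$, $b=0$ and the sign. Your main argument instead first extracts the Pell-type identity $P^2-1=(x^2-1)R^2$ by a careful multiplicity count on the roots of $(P-1)(P+1)$ (this is where all three hypotheses enter, and your bookkeeping — each critical point is a double root, leaving exactly two simple roots which must be $\pm1$ — is sound), and then solves the Pell equation by identifying the unit group of $\C[x,y]/(y^2-x^2+1)\cong\C[u,u^{-1}]$ and using the involution $y\mapsto -y$ to force $P=\pm\tfrac12(u^m+u^{-m})=\pm T_{|m|}(x)$. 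What your approach buys is complete algebraic self-containment: it avoids the topological input of the dessin correspondence, and as a byproduct it re-derives the identity \eqref{eq:Tn-Un_square_relation} with $R=\pm U_{n-1}$, which the paper uses repeatedly elsewhere. What the paper's approach buys is brevity (given the cited machinery) and thematic coherence, since the whole section is organized around Belyi--Shabat polynomials and dessins; it also generalizes more readily to profiles with more than two critical values, where no Pell structure is available.
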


\begin{proof}We treat $P$ as a self-cover of $\P^1=\A^1\cup\{\8\}$. The assumption that the critical points are non-degenerate implies that the bi-colored graph associated with $P$ (see the discussion preceding Proposition \ref{thm:dessin_correspondence}) is a chain. The critical values are $\pm 1$, so changing $P$ to $-P$ if necessary we may assume that there is an orientation-preserving homeomorphism of $(\P^1,\8)$ identifying the bi-colored graphs of $P$ and of the Chebyshev polynomial $T_n$. By Proposition \ref{thm:dessin_correspondence} branched self-coverings of $\P^1$ induced by $\pm P$ and $T_n$ are isomorphic via an isomorphism respecting $\8$, so $\pm P(x)=T_n(ax+b)$ for some $a\in \C^*, b\in \C$. Since $T_n(-x)=(-1)^nT(x)$, we may assume $a>0$. The normalization conditions give $\{-a+b,a+b\}\subseteq \{-1,1\}$, hence $b=0$ and $a=1$. Then $P=\pm T_n$, so in fact $P=T_n$, because $T_n$ and $P$ agree on $x=1$.
\end{proof}

\medskip
\subsection{Rational fibrations and completions.}  

By a \emph{fibration} of an algebraic variety we mean a faithfully flat (hence surjective) morphism whose general fibers are irreducible, reduced and have positive dimension. A \emph{completion} of a fibration $\rho\:S\to B$ of a smooth variety is a triple $(\bar  S,\iota,\bar  \rho)$ consisting of a smooth complete variety $\bar  S$, an embedding $\iota\: S\to \bar  S$ for which $D=\bar  S\setminus S$ is a simple normal crossing divisor and a fibration $\bar  \rho\:\bar  S\to \bar  B$, where $\bar  B$ is a smooth completion of $B$, such that $\bar  \rho| _S=\rho$. In fact we will often refer to $(\bar  S,D,\bar  \rho)$ as the completion, the identification $S\cong \bar  S\setminus D$ being implicitly fixed. A completion is called \emph{minimal} if it does not properly dominate other completions.

\begin{dfn} \label{dfn:fib-preserving} If $\rho_i\:S_i\to B_i$, $i=1,2$ are fibrations of varieties then we say that a morphism $\eta\:S_1\to S_2$ \emph{respects} the fibrations $\rho_1$ and $\rho_2$ if there exists a morphism $\varphi\:B_1\to B_2$ such that $\rho_2\circ\eta=\varphi\circ \rho_1$:
$$\xymatrix{S_1\ar[d]_{\rho_1}\ar[r]^{\eta} & S_2\ar[d]^{\rho_2}\\ B_1\ar[r]^{\varphi} & B_2.}$$
\end{dfn}

A fibration $\rho\:S\to B$ of a normal surface $S$ is called a \emph{$\P^1$-} or \emph{$\A^1$-} or an \emph{$\Ast$-fibration} if its general fibers are isomorphic to $\P^1$, $\A^1=\A^1_{\C}$ or $\Ast=\A^1_{\C}\setminus\{0\}$ respectively. For such a fibration a fiber is called \emph{degenerate} if it is not isomorphic to a general fiber, equivalently it is either non-reduced or its reduced form is not isomorphic to the general fiber. 
Fibers of $\P^1$-fibrations of smooth projective surfaces are well understood. There are no multiple fibers and every reducible fiber contains a $(-1)$-curve which meets at most two other components of the fiber, so iterated contractions of such curves map the fiber onto a smooth $0$-curve. In particular, every such fiber is a tree of rational curves. An $\Ast$-fibration is \emph{untwisted} if its generic fiber is $\A^1_{K,*}$, where $K=\C(B)$, and \emph{twisted} otherwise. 
Fibers of $\A^1$- and $\Ast$-fibrations of affine surfaces can be easily described. By $\mu\A^1\cup_{\{0\}}\nu\A^1$ we denote the scheme $\Spec(\C[x,y]/(x^{\mu}y^{\nu}))$, which is a sum of two multiple affine lines whose reduced forms meet transversally at one point.

\begin{notation} For a curve $C$ contracted by a given fibration $\rho:S\to B$ of a surface we denote by $\mu(C)$ the multiplicity of $C$ in the scheme-theoretic fiber of $\rho$ containing $C$. 
\end{notation}

\begin{lem}[$\A^1$-, $\A^1_*$-fibrations, \cite{Miyan-OpenSurf}, 3.1.4.2, 3.1.7.3]\label{lem:Gamma+Delta}
Let $F$ and $F_{gen}$ denote respectively a degenerate and a general fiber of a fibration of a smooth affine surface.\begin{enumerate}
	\item If $F_{gen}\cong \A^1$ then $F_\redd\cong \bigsqcup_{i=1}^r \A^1$ for some $r\geq 1$.
	\item If $F_{gen}\cong \Ast$ then $F_\redd\cong \Gamma\sqcup \bigsqcup_{i=1}^r \A^1$ for some $r\geq 0$ and $\Gamma\in \{\emptyset$, $\Ast$, $\AuA\}$.  Any two lines constituting a connected component with reduced form $\AuA$ have coprime multiplicities.
\end{enumerate} 
\end{lem}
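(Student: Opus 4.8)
The plan is to reduce everything to the combinatorics of a tree of rational curves on a suitable completion. Concretely, I would fix a completion $(\bar S,D,\bar\rho)$ of the given $\A^1$- or $\Ast$-fibration $\rho\:S\to B$ in which $\bar\rho\:\bar S\to\bar B$ is a $\P^1$-fibration and $D=\bar S\setminus S$ is simple normal crossing; by the structure of $\P^1$-fibrations recalled above, each fiber of $\bar\rho$ is then a tree of smooth rational curves and no fiber is a multiple of a single curve. Two consequences of affineness drive the argument: $D$ is connected (being the boundary of a smooth affine surface), and $S$ contains no complete curve, so every component of every fiber of $\bar\rho$ meets $D$. Since a general fiber of $\rho$ is $\A^1$ (resp.\ $\Ast$), i.e.\ $\P^1$ with one point removed (resp.\ with two points removed), the horizontal part $D^h$ of $D$ meets a general fiber of $\bar\rho$ in one point (resp.\ in two points, counted with multiplicity); hence $D^h$ is a single $1$-section $H$ (resp.\ either a union $H_1\cup H_2$ of two $1$-sections in the untwisted case, or a single irreducible $2$-section $H$ in the twisted case).

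Next I would analyse a fiber $\bar F=\bar\rho^{-1}(b_0)$, $b_0\in B$, corresponding to a degenerate fiber $F=\rho^{-1}(b_0)$ of $\rho$; then $\bar F$ is reducible (a reduced irreducible fiber being a general one), and with $D_0$ the union of its components lying in $D$ one has $\emptyset\neq D_0\subsetneq\bar F$ and $F=\bar F\setminus D_0$ scheme-theoretically. In the $\A^1$-case $H$ meets $\bar F$ transversally at one point $p$, lying on a single component $C_0$. I would first check that $p\in C_0\subseteq D_0$ and that $D_0$ is connected: a connected piece of $D_0$ avoiding $p$ — or all of $D_0$, were $C_0\notin D_0$ — would meet neither $H$ (which touches $\bar F$ only at $p$) nor any other fiber of $\bar\rho$, so it would be a connected component of $D$ distinct from $D$, which is impossible. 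Since $\bar F$ is a tree and $D_0$ a connected subtree, each connected component of $\bar F\setminus D_0$ is joined to $D_0$ by a single node; moreover every component of $\bar F$ outside $D_0$ must meet $D_0$, because it cannot meet $H$ (which meets $\bar F$ only at $p\in C_0\subseteq D_0$) and, by the SNC hypothesis together with $H\cdot\bar F=1$, $H$ avoids the nodes of $\bar F$. A connected component of $\bar F\setminus D_0$ containing two or more curves would then create a cycle in the tree $\bar F$; so every such component is a single $\P^1$ meeting $D_0$ in exactly one point, contributing one copy of $\A^1$ to $F_\redd$, and distinct components are disjoint. As $\bar F$ is reducible there is at least one such component, which is statement~(1).

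For~(2) the same scheme works, the new feature being that $D^h\cap\bar F$ now consists of at most two points. Running the connectedness argument, $D_0$ splits into at most two connected pieces (a piece carrying none of these points would again be a spurious connected component of $D$), and all components of $\bar F\setminus D_0$ are single $\P^1$'s meeting $D_0$ once, except possibly for one exceptional piece $\Gamma$ accounting for the disconnectedness of $D_0$. Chasing the tree $\bar F$ together with the at most two horizontal intersection points — using $D^h\cdot\bar F\le 2$ and SNC to keep $H$ (or $H_1,H_2$) off the nodes of $\bar F$ — I would conclude that $\Gamma_\redd$ is empty, or one $\P^1$ with two points removed ($\cong\Ast$), or two $\P^1$'s meeting each other and meeting $D$ once each ($\cong\AuA$), the remaining pieces still yielding copies of $\A^1$. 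In the last case, in local analytic coordinates at the common point $q\in S$ of the two lines the fibration takes the form $\rho=(\text{unit})\cdot x^{\mu_1}y^{\mu_2}$, where $\mu_1,\mu_2$ are the multiplicities of the two lines, so a general fiber of $\rho$ near $q$ is $\{x^{\mu_1}y^{\mu_2}=c\}$; being a nonempty open subset of the irreducible curve $\Ast$ it is connected, which forces $\gcd(\mu_1,\mu_2)=1$.

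I expect the real obstacle to be the combinatorial bookkeeping in the $\Ast$-case: showing that $D_0$ can fail to be connected in only one ``place'', hence that there is at most one exceptional piece $\Gamma$, and that $\Gamma_\redd$ has exactly the three listed shapes. The inputs that make this go through are the bound $D^h\cdot\bar F\le 2$ combined with the SNC condition (forcing the horizontal part to miss the nodes of $\bar F$), and treating the twisted and untwisted $\Ast$-fibrations on the same footing. Everything else is routine tree combinatorics layered on the structure theory of $\P^1$-fibrations recalled above.
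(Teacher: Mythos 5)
Your treatment of the structure statements --- part (1) and the shape of $\Gamma$ in part (2) --- follows the standard completion-plus-tree-combinatorics route; the paper simply cites \cite{Miyan-OpenSurf} for these, your argument for (1) is essentially correct, and for (2) you yourself flag that the case analysis is left open. The genuine problem is your proof of the coprimality of the two multiplicities, which is precisely the one assertion the paper actually proves. You argue that $\{x^{\mu_1}y^{\mu_2}=c\}$ near $q$ is connected because it is ``a nonempty open subset of the irreducible curve $\Ast$''. But this set (the Milnor fibre of $x^{\mu_1}y^{\mu_2}$ at $q$) is open in the nearby fibre only in the \emph{analytic} topology, not the Zariski topology, and an analytically open subset of an irreducible curve has no reason to be connected. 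In fact the Milnor fibre of $x^{\mu_1}y^{\mu_2}$ has exactly $\gcd(\mu_1,\mu_2)$ connected components, and nothing in the local picture prevents these from being joined to one another elsewhere on the global fibre $\Ast$; so the step proves nothing. Nor can coprimality be a purely local fact: fibres of $\P^1$-fibrations of smooth projective surfaces do contain pairs of meeting components with non-coprime multiplicities (blow up a free point on a component of multiplicity $2$), so some global input --- affineness of $S$ and the structure of the boundary --- has to enter.

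The paper's argument is global in exactly this way: after discarding the $\A^1$-components, pass to a minimal completion; since $\mu(G_1),\mu(G_2)\geq 2$ (else there is nothing to prove), the closures $G_1,G_2$ miss the sections contained in the connected boundary $D$, minimality forces the unique $(-1)$-curve of the fibre tree $\bar F$ to be one of $G_1,G_2$, and $\bar F\setminus F$ consists of two connected pieces each reaching a section and hence each containing a component of multiplicity $1$. This shows $\bar F$ is a chain with multiplicity-one tips, and coprimality of adjacent multiplicities follows by induction along the contraction of $(-1)$-curves, since a node blow-up replaces an adjacent pair $(\mu,\mu')$ by $(\mu,\mu+\mu',\mu')$ and $\gcd(\mu,\mu+\mu')=\gcd(\mu,\mu')$. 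You should replace your local connectedness claim by an argument of this kind (or by an honest monodromy computation combined with a global constraint); as written the step fails.
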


\begin{proof}The only statement not proven in the above references is the one about multiplicities in (2). Deleting connected components whose reduced form is $\A^1$ we may assume that $r=0$. Let $\bar F$ be the fiber of the minimal completion of the fibration which contains $F$ and let $G_1, G_2$ be the closures of the components of $F$. The boundary $D$ of the completion is connected, because the initial surface is affine. We may assume $\mu(G_1), \mu(G_2)\geq 2$, hence $G_1+G_2$ does not meet sections contained in $D$. The minimality implies that $D\cap \bar F$ contains no $(-1)$-curves, and hence $\bar F$, which is a tree of rational curves, contains a unique $(-1)$-curve, say $G_1$. The difference $\bar F\setminus F$ has exactly two connected components, each meeting one of the sections contained in $D$, hence containing a component of multiplicity $1$. It follows that $\bar F$ is a chain of rational curves. By induction we argue that any two components of $\bar F$ which meet have coprime multiplicities. 
\end{proof}

\smallskip
\subsection{$\Q$-homology planes of negative Kodaira dimension and $\C^*$-pseudo-planes.} \label{ssec:Cst_fibrations}

A $\Q$-\emph{homology plane} is a smooth complex surface whose Betti numbers are the same as those of the affine plane, that is, trivial in positive dimensions. As mentioned in the introduction, every $\Q$-homology plane is affine. These surfaces are important for many problems in affine geometry and the attempts to fully understand them motivated lots of progress (see \cite{Miy-recent_dev}, \cite[\S 3.4]{Miyan-OpenSurf} and \cite{Palka-recent_progress_Qhp} for a review). We need the following more detailed description (see \cite[\S 3.4]{Miyan-OpenSurf} and \cite[5.9, 4.19]{Fujita-noncomplete_surfaces}). By $\kappa$ we denote the logarithmic Kodaira dimension. Cyclic groups are denoted by $\Z_m:=\Z/m\Z$ and the free product of groups by $*$. 

\begin{lem}[$\Q$-homology planes with $\kappa=-\8$]\label{lem:Qhp_structure}
A $\Q$-homology plane $S$ of negative Kodaira dimension has an $\A^1$-fibration $\rho:S\to B$, and for every such fibration the base curve $B$ and the reduced forms of all the fibers are isomorphic to $\A^1$. 

If $\mu_1F_1,\ldots,\mu_nF_n$, with $F_i$ reduced, are all the degenerate fibers of $\rho:S\to B$, then $\pi_1(S)\cong \Z_{\mu_1}*\ldots*\Z_{\mu_n}.$ In particular, $\pi_1(S)$ is finite if and only if it is cyclic.
\end{lem}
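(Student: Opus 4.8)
The plan is to prove the two assertions of Lemma~\ref{lem:Qhp_structure} in the order they are stated: first the existence of an $\A^1$-fibration $\rho\colon S\to B$ together with the identification of $B$ and of the reduced fibers with $\A^1$, and then the computation of $\pi_1(S)$ as the free product $\Z_{\mu_1}*\cdots*\Z_{\mu_n}$. For the first part I would invoke the structure theory of smooth affine surfaces of negative Kodaira dimension: since $S$ is a $\Q$-homology plane it is affine and rational, and $\kappa(S)=-\infty$ forces $S$ to carry an $\A^1$-fibration over a smooth curve $B$ (this is the cited $[\text{Miyan-OpenSurf, \S 3.4}]$ and Fujita). Because $S$ is a $\Q$-homology plane, $H_1(S;\Q)=0$ and $\overline{\mathbb{Q}}$-acyclicity constrains $B$: the generic fiber $\A^1$ has trivial rational homology, so by the Leray spectral sequence (or simply by the fact that an $\A^1$-bundle over an open curve retracts onto that curve rationally) $B$ must itself be a $\Q$-acyclic smooth curve, hence $B\cong\A^1$. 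Similarly each reduced fiber component is a rational affine curve; combined with Lemma~\ref{lem:Gamma+Delta}(1), every fiber has reduced form a disjoint union of copies of $\A^1$, and a counting argument using $e_{top}(S)=1$ together with the fact that $e_{top}(\A^1)=1$ for a general fiber pins down that in fact the reduced fiber is a \emph{single} copy of $\A^1$ when the ambient is a $\Q$-homology plane — otherwise extra components would contribute to $H_\bullet(S;\Q)$, contradicting acyclicity.

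For the fundamental group computation I would pass to a minimal log smooth completion $(\bar S, D, \bar\rho)$ of the $\A^1$-fibration, so that $D=\bar S\setminus S$ is an SNC divisor of rational curves and $\bar\rho\colon\bar S\to\P^1$ is a $\P^1$-fibration extending $\rho$. Choosing a general fiber $F_{gen}\cong\P^1$ and noting that $S\setminus(\text{fibers over branch points})$ is a Zariski-locally trivial $\A^1$-bundle over $\A^1$ minus finitely many points, hence simply connected, one reduces the computation of $\pi_1(S)$ to analyzing the local monodromy contributed by each degenerate fiber $\mu_iF_i$. The key local statement is that a degenerate fiber whose reduced form is $\A^1$ with multiplicity $\mu_i$ contributes exactly a $\Z_{\mu_i}$ factor: near such a fiber the surface looks, after the completion, like a neighborhood of a multiple fiber of a $\P^1$-fibration, and van Kampen's theorem applied to the decomposition of $S$ into the generic-fiber part and tubular neighborhoods of the degenerate fibers yields $\pi_1(S)\cong\Z_{\mu_1}*\cdots*\Z_{\mu_n}$ — the free product arising because the base $\A^1$ with $n$ punctures has free $\pi_1$ and each puncture's loop maps to a generator whose $\mu_i$-th power dies.

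The main obstacle is the careful bookkeeping in the van Kampen argument: one must verify that a neighborhood of the degenerate fiber $\mu_iF_i$ in $S$ (not in $\bar S$!) really has fundamental group $\Z_{\mu_i}$, which requires knowing the structure of the full fiber $\bar F_i$ in the completion and how the boundary curves $D\cap\bar F_i$ sit inside it. Here Lemma~\ref{lem:Gamma+Delta}(1) and the minimality of the completion (no $(-1)$-curves in $D\cap\bar F_i$, so $\bar F_i$ is a tree contracting to a $0$-curve) do the geometric work: contracting $\bar F_i$ to a smooth fiber exhibits the multiplicity $\mu_i$ as the order of the local orbifold structure, and a Mayer--Vietoris/van Kampen computation on the tree gives the cyclic group of that order. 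The final sentence, that $\pi_1(S)$ is finite iff cyclic, is then immediate: a free product $\Z_{\mu_1}*\cdots*\Z_{\mu_n}$ is finite precisely when at most one factor is nontrivial, in which case it is cyclic (and conversely a nontrivial cyclic group is such a free product with $n=1$).
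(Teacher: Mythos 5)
The paper does not prove this lemma at all: it is quoted verbatim from the literature, with the citations \cite[\S 3.4]{Miyan-OpenSurf} and \cite[5.9, 4.19]{Fujita-noncomplete_surfaces} standing in for a proof. Your sketch is essentially the standard argument found in those references, and it is correct in substance: the Miyanishi--Sugie--Fujita theorem supplies the $\A^1$-fibration; $B\cong\A^1$ because $B$ is affine, rational and has $H_1(B;\Q)=0$ as a quotient of $H_1(S;\Q)=0$; the Euler-characteristic count $1=e_{top}(S)=(1-n)+\sum_i r_i$ forces each degenerate fiber to be irreducible with reduced form $\A^1$ (your parenthetical remark that extra components would show up in $H_\bullet(S;\Q)$ is the less precise version of this -- the clean statement is the additivity/multiplicativity of $e_{top}$, which you do invoke); and the van Kampen computation gives $\Z_{\mu_1}*\cdots*\Z_{\mu_n}$ because the preimage of a small disk around the $i$-th critical value has $\pi_1\cong\Z_{\mu_i}$, generated by the image of the loop $\gamma_i$ around the puncture, the meridian of $F_i$ being $\gamma_i^{\mu_i}$.

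One sentence in your write-up is wrong as stated and would, if taken literally, derail the computation: you write that $S$ minus the degenerate fibers, being an $\A^1$-bundle over $\A^1$ minus finitely many points, is ``hence simply connected.'' It is not -- it is homotopy equivalent to the punctured base, so its fundamental group is free on the $n$ loops $\gamma_i$. If it really were simply connected, van Kampen would kill every $\gamma_i$ and yield $\pi_1(S)=1$. You correct yourself two sentences later (``the base $\A^1$ with $n$ punctures has free $\pi_1$ and each puncture's loop maps to a generator whose $\mu_i$-th power dies''), which is the right statement, so I read the earlier clause as a slip rather than a gap; but it should be fixed. The final observation, that a free product of nontrivial finite cyclic groups is infinite as soon as there are at least two factors, is correct and does close the argument.
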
 

A $\Q$-homology plane is called a \emph{pseudo-plane} if it has negative logarithmic Kodaira dimension and a cyclic fundamental group.  
The following positive result concerning the Generalized Jacobian Conjecture was proved in \cite[\S3, \S6]{GurjarMiyanishi-GJC_for_Qhp}. Some simplifications and a correction of Theorem 6.2 from loc.\ cit.\ (the counterexample in case $\pi_1(S)\cong \Z_2*\Z_2$) were made in \cite[2.4.3(2), 2.3.11]{Miyanishi-Lectures_on_polynomials}.  

\begin{prop}[GJC for $\Q$-homology planes with $\kappa=-\8$] \label{prop:GJC_known} Let $S$ be a $\Q$-homology plane of negative Kodaira dimension with a non-cyclic fundamental group. Then $S$ has a unique $\A^1$-fibration and every étale endomorphism of $S$ respects it. Furthermore, the Generalized Jacobian Conjecture holds for $S$, unless $\pi_1(S)=\Z_2*\Z_2$.
\end{prop}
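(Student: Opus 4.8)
The plan is to fix an $\A^1$-fibration of $S$, show it is the only one, deduce that every étale endomorphism respects it, and then extract from étaleness a numerical identity for the induced self-map of $\A^1$ that can only be solved non-trivially when $\pi_1(S)=\Z_2*\Z_2$. So fix the $\A^1$-fibration $\rho\colon S\to B\cong\A^1$ provided by Lemma~\ref{lem:Qhp_structure}, with multiple fibers $\mu_1F_1,\dots,\mu_nF_n$ over distinct points $b_1,\dots,b_n\in\A^1$ (the $\mu_i\geq 2$), so that $\pi_1(S)\cong\Z_{\mu_1}*\cdots*\Z_{\mu_n}$; then $\pi_1(S)$ is non-cyclic exactly when $n\geq 2$. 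For uniqueness of $\rho$: if $S$ carried a second $\A^1$-fibration with a different general fiber, the boundary of a minimal log smooth completion would be forced to be a chain of rational curves, so by the classification recalled in the introduction \cite{Gizatullin_quasihom_aff_surf,MasMiy-Cplus_actions_on_Qhp} one would get $S\cong S(k,1,a)$, whose fundamental group $\Z_k$ is cyclic --- contradicting our hypothesis.

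Next, given an étale endomorphism $f\colon S\to S$, the general fiber of $\rho\circ f$ is the $f$-preimage of an $\A^1$-fiber of $\rho$, hence an étale cover of the simply connected curve $\A^1$, hence a disjoint union of copies of $\A^1$; differentiating $\rho\circ f=\mathrm{const}$ along one such copy shows that $f$ maps it isomorphically onto the $\A^1$-fiber beneath. Stein-factoring $\rho\circ f=\beta\circ\rho'$ then exhibits $\rho'$ as an $\A^1$-fibration of $S$, which by uniqueness satisfies $\rho'=\gamma\circ\rho$ for some $\gamma\in\Aut(\A^1)$; with $\varphi:=\beta\circ\gamma$ we obtain $\rho\circ f=\varphi\circ\rho$ and $\deg\varphi=\deg f=:d$, i.e.\ $f$ respects $\rho$.

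The heart of the proof is a local computation. Comparing the two expressions for the divisor $f^*\rho^*[q]=\rho^*\varphi^*[q]$ and using that an étale morphism pulls back a scheme fiber without altering the multiplicities occurring in it, I expect the relation: if $\varphi(c)=q$ with ramification index $e_c$, and the fibers over $c$ and $q$ have multiplicities $\mu_c'$ and $\mu_q$, then $\mu_q=e_c\mu_c'$. Hence $\varphi$ is unramified off $\{b_1,\dots,b_n\}$, it maps $\{b_1,\dots,b_n\}$ into itself --- say $\varphi(b_i)=b_{\sigma(i)}$ with $\mu_{\sigma(i)}=e_{b_i}\mu_i$ --- and any further point $c$ of $\varphi^{-1}(b_j)$ has $e_c=\mu_j$. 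Letting $p_j$ count these further points, $\sum_{c\in\varphi^{-1}(b_j)}e_c=d$ reads $d=\mu_j\bigl(p_j+\sum_{\sigma(i)=j}\mu_i^{-1}\bigr)$, while Riemann--Hurwitz for $\varphi$ --- equivalently Lemma~\ref{lem:Thom}, since $\varphi$ is branched over at most the $n$ points $b_j$ --- gives $\sum_j p_j=(n-1)(d-1)$. Summing the first identity over $j$ and substituting yields $(d-1)\sum_{j=1}^{n}\mu_j^{-1}=(n-1)(d-1)$. If $d\geq 2$ this forces $\sum_{j=1}^{n}\mu_j^{-1}=n-1$; since each $\mu_j\geq 2$ the left side is at most $n/2$, so $n\leq 2$, and for $n=2$ the equality $\mu_1^{-1}+\mu_2^{-1}=1$ forces $\mu_1=\mu_2=2$, i.e.\ $\pi_1(S)=\Z_2*\Z_2$. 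Thus when $\pi_1(S)$ is non-cyclic and $\ne\Z_2*\Z_2$ every étale endomorphism has $d=1$; then $\varphi\in\Aut(\A^1)$ and the fiberwise analysis makes $f$ a bijective birational étale endomorphism, hence an isomorphism by Lemma~\ref{lem:etale-basics}, in particular proper, so the Generalized Jacobian Conjecture holds for $S$.

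I expect the main obstacle to be the uniqueness of the $\A^1$-fibration: this is not formal and rests on the structure theory of $\Q$-homology planes of negative Kodaira dimension (the chain-completion classification). The other delicate point is the étaleness-versus-ramification bookkeeping near the fibers, where one must also accommodate possible reducible degenerate fibers of $\rho$ (see Lemma~\ref{lem:Gamma+Delta}); these, however, only contribute extra points to the counts $p_j$ and leave the final identity unchanged. This is in essence the argument of \cite[\S3,\,\S6]{GurjarMiyanishi-GJC_for_Qhp}, together with the correction to the $\Z_2*\Z_2$ case pointed out in \cite{Miyanishi-Lectures_on_polynomials}.
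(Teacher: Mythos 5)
The paper itself does not prove this proposition: it is imported from \cite[\S 3, \S 6]{GurjarMiyanishi-GJC_for_Qhp} together with Miyanishi's correction of the $\Z_2*\Z_2$ case in \cite{Miyanishi-Lectures_on_polynomials}, so there is no in-text proof to compare yours against, and what you have written is a reconstruction of the cited argument. Its numerical core is correct and is indeed the heart of the matter: Lemma~\ref{lem:multiplicities} (with Lemma~\ref{lem:etale-basics}(1) forcing $m=1$) gives $\mu_{\varphi(c)}=e_c\mu_c$, hence $d=\mu_j\bigl(p_j+\sum_{\sigma(i)=j}\mu_i^{-1}\bigr)$, the Thom/Riemann--Hurwitz count gives $\sum_j p_j=(n-1)(d-1)$, and summing yields $(d-1)\sum_j\mu_j^{-1}=(n-1)(d-1)$, which for $d\geq 2$ forces $n=2$ and $\mu_1=\mu_2=2$. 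Note that by Lemma~\ref{lem:Qhp_structure} every fiber of $\rho$ has irreducible reduced form, so your parenthetical worry about reducible degenerate fibers is vacuous here. The uniqueness of the $\A^1$-fibration is also correctly reduced to Gizatullin: with $n\geq 2$ multiple fibers the boundary of a minimal completion has a branching component, so it is not a chain and a second $\A^1$-fibration is excluded (this is the same reduction the paper performs in the proof of Lemma~\ref{lem:Cst-pseudo-planes}).

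The one genuine gap is in the step ``every étale endomorphism respects $\rho$''. You claim that $f^{-1}(F)\to F$ is an étale cover of the simply connected curve $\A^1$ and hence a disjoint union of copies of $\A^1$. But $f$ is not assumed proper --- that is precisely what is at stake in the whole paper --- so $f^{-1}(F)\to F$ is only étale and quasi-finite, not finite, and simple connectedness of $\A^1$ buys you nothing: a connected component of $f^{-1}(F)$ could a priori be a closed curve isomorphic to $\A^1\setminus\{0\}$ mapping to $F$ by an open immersion (nothing formal excludes such curves from these surfaces), in which case the Stein factorization of $\rho\circ f$ would not be an $\A^1$-fibration. The standard repair argues with images rather than preimages: by Lemma~\ref{lem:etale-basics}(5) the restriction of $f$ to a general fiber $F\cong\A^1$ is unramified of degree $1$ onto a curve with normalization $\A^1$ and no cusps, and the resulting algebraic family of affine lines $\{f(F_t)\}$ generates an $\A^1$-fibration of $S$, which by uniqueness must be $\rho$; once ``respects'' is established, $\deg\varphi=\deg f$ follows from Lemma~\ref{lem:factorization_sigma=id}(1) rather than from the covering-space picture. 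With that step patched, your argument is complete and coincides with the Gurjar--Miyanishi proof that the paper cites.
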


Recall  that a $\C^*$-action on a smooth surface $S=\Spec (A)$ is called \emph{hyperbolic} if for every fixed point the weights of the induced linear action on the tangent space are non-zero and have different sign. Equivalently (see \cite{Kaup-Fieseler_Cst-actions}), every fixed point is isolated and is not a limit point of every nearby orbit. For such actions, the algebraic quotient morphism $\rho\:S\to S/\C^*=\Spec (A^{\C^*})$ is an $\Ast$-fibration over a smooth curve.  

We now review the description of pseudo-planes which admit hyperbolic $\C^*$-actions. We begin with their universal covers.

\begin{example}[The $\C^*$-surfaces $\ti S(k,r)$]\label{ex:tiS(k,r)}
For fixed integers $k,r\geq 1$, the smooth affine surface \begin{equation}
\ti S(k,r)=\{x^ry=z^k-1\}\subseteq \Spec (\C[x,y,z])
\end{equation} is endowed with an effective hyperbolic $\C^*$-action defined by
\begin{equation}\label{eq:Cst-action_Sti(k,r)}
\lambda\cdot (x,y,z)=(\lambda x,\lambda^{-r}y,z),\  \lambda\in \C^*.
\end{equation}
The algebraic quotient morphism $$\ti \rho=\pr_z\:\ti S(k,r)\to \ti S(k,r)/\C^*=\Spec (\C[z])\cong \A^1$$
is an $\Ast$-fibration with degenerate fibers $\ti \rho^*(\eps^j)\cong \A^1\cup_{\{0\}}r\A^1$, where $\eps$ is a primitive $k$-th root of unity and $j=0,\ldots,k-1$. The degenerate fibers of the minimal completion of $\rho$ into a $\mathbb{P}^1$-fibration are of type $[r,1,(2)_{r-1}]$, that is, they are chains of rational curves with subsequent weights $-r,-1,-2,\ldots,-2$ with exactly $r-1$ weights $(-2)$.

In addition, for every polynomial $P\in \C[x]$, $\ti S(k,r)$ carries a $\C^+$-action defined by 
\begin{equation}\label{eq:Cplus-action_Sti(k,r)}
\Theta^P_t(x,y,z)=(x,y+\frac{(z+tP(x)x^r)^k-z^k}{x^r}, z+tP(x)x^r),\  t\in \C^1,
\end{equation}
whose algebraic quotient morphism $$p=\pr_x\:\ti S(k,r)\to \ti S(k,r)/\C^+=\Spec (\C[x])\cong \A^1$$ is an $\A^1$-fibration trivial over $\A^1\setminus\{0\}$ and with the unique degenerate fiber $p^*(0)=\bigsqcup_{i=1}^k\A^1$. In particular, $\ti S(k,r)$ is a rational surface with logarithmic Kodaira dimension $\kappa(\ti S(k,r))=-\8$, which is simply connected by \cite[5.9, 4.19]{Fujita-noncomplete_surfaces}, as all fibers of $p$ are reduced. Since $\wt S(k,r)$ is affine, $H_2(\wt S(k,r),\Z)$ is free abelian and hence isomorphic to $\Z^{k-1}$, because its topological Euler characteristic is $k$.

Moreover, for every $a\in\{1,\ldots,k\}$ coprime with $k$, $\ti S(k,r)$ admits a free $\C^*$-equivariant $\Z_k$-action defined by 
\begin{equation}
\eps*_a(x,y,z)=(\eps x, \eps^{-r} y, \eps^{-a}z), \label{eq:Zk-action}
\end{equation}
where we identify $\Z_k$ with the group of complex $k$-th roots of unity.
\end{example}

\begin{example}[The $\C^*$-pseudo-planes $S(k,r,a)$]\label{ex:S(k,r,a)}
Let again $k, r$ be positive integers and let $a\in\{1,\ldots,k\}$ be coprime with $k$. Denote by
\begin{equation}\label{eq:quotient_pi}
\pi_a\:\ti S(k,r)\to S(k,r,a)=\ti S(k,r)/\Z_k
\end{equation}
the quotient morphism of $\ti S(k,r)$ by the $\Z_k$-action \eqref{eq:Zk-action}. Note that for $k=1$ we have $a=1$ and $S(1,r,a)=\ti S(1,r)\cong \A^2$. Since it commutes with the $\Z_k$-action, the $\C^*$-action \eqref{eq:Cst-action_Sti(k,r)} descends to an effective hyperbolic $\C^*$-action on $S(k,r,a)$. We have a commutative diagram 
\[\xymatrix{ \ti S(k,r) \ar[d]_{\ti \rho} \ar[r]^{\pi_a} & S(k,r,a) \ar[d]^{\rho} \\ \Spec (\C[z]) \ar[r]^{\pi_a'} & \Spec (\C [t]),}\]  
where $\rho\:S(k,r,a)\to S(k,r,a)/\C^*=\Spec (\C[t])$ is the quotient $\Ast$-fibration and where $\pi_a'(z)=t=1-z^k$. The degenerate fibers of $\rho$ are
\begin{equation}\label{eq:S(k,r,a)_fibers}
F_1=\rho^*(1)=kA_1\cong k \Ast \text{\ \ and\ \ } F_0=\rho^*(0)=A_0\cup rA_2\cong \A^1\cup_{\{0\}}r\A^1.
\end{equation}
The degenerate fibers of the minimal completion of $\rho$ into a $\mathbb{P}^1$-fibration are chains of rational curves, the one containing $F_0$ is of type $[r,1,(2)_{r-1}]$, see Fig.\ \ref{fig:S(k,r,a)_fibers}.

\begin{figure}[h]\centering
\begin{minipage}{3in}\centering \includegraphics[scale=0.6]{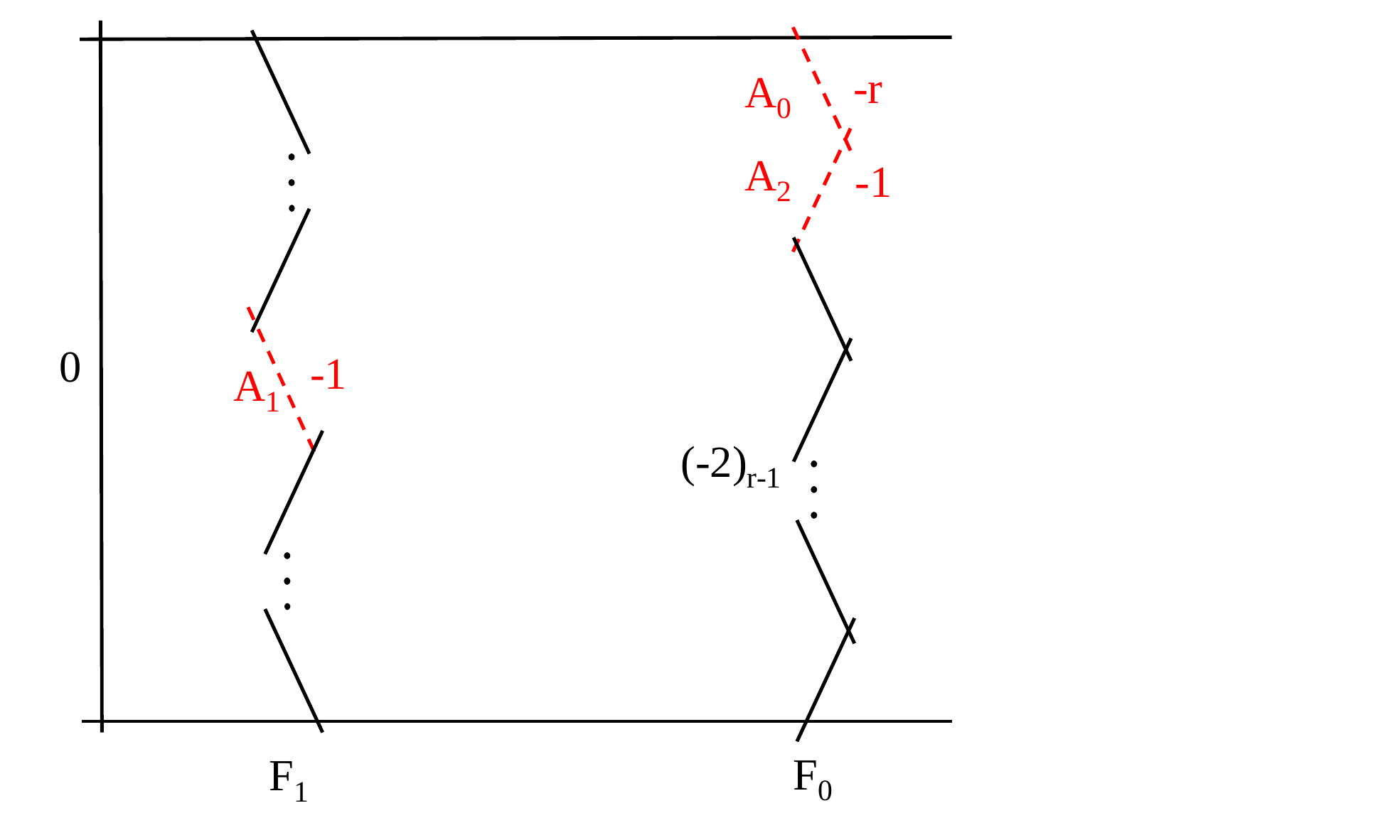}\end{minipage}
\caption{A completion of $\rho\:S(k,r,a)\to \A^1$.}\label{fig:S(k,r,a)_fibers}
\end{figure}

Since $\ti S(k,r)$ is a simply connected surface of negative Kodaira dimension and Euler characteristic equal to $k$, the surface $S(k,r,a)$ is a pseudo-plane with fundamental group $\pi_1(S(k,r,a))\cong \Z_k$ and with a hyperbolic $\C^*$-action. 
\end{example}

\begin{lem}[Characterization of $S(k,r,a)$, \cite{MiyMa-hp_with_torus_actions}, \cite{FlZa-covers_of_Cst-pseudoplanes}] \label{lem:Cst-pseudo-planes} Let $S$ be a non-trivial ($S\not\cong \A^2$) pseudo-plane with a hyperbolic $\C^*$-action. Let $k=|\pi_1(S)|$ and let $r$ be the maximal multiplicity of a fiber component of the quotient morphism isomorphic to $\A^1$. If $r\geq 2$ then there exists $a\in\{1,\ldots,k-1\}$ coprime with $k$ such that $S$ is $\C^*$-equivariantly isomorphic to $S(k,r,a)$ with the $\C^*$-action induced from \eqref{eq:Cst-action_Sti(k,r)}. The isomorphism is unique up to a composition with the group automorphism $\lambda \mapsto \lambda^{-1}$ of $\C^*$. 
\end{lem}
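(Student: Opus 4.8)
The plan is to reduce the statement to the structure theory of hyperbolic $\C^*$-surfaces via their quotient $\Ast$-fibration $\rho\colon S\to S/\C^*$, and then to identify the surface by examining the degenerate fibers. First I would record that since $S$ is a pseudo-plane of negative Kodaira dimension with $\pi_1(S)\cong\Z_k$ cyclic, Lemma~\ref{lem:Qhp_structure} gives an $\A^1$-fibration whose unique multiple fiber has multiplicity $k$; on the other hand the hyperbolic $\C^*$-action produces an $\Ast$-fibration $\rho\colon S\to C$ over a smooth affine curve $C$, which must be $C\cong\A^1$ since $S$ is rational and $\rho$ has a section-like behaviour forced by $\Q$-acyclicity. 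Using Lemma~\ref{lem:Gamma+Delta}(2), every degenerate fiber of $\rho$ is of the form $\Gamma\sqcup\bigsqcup\A^1$ with $\Gamma\in\{\emptyset,\Ast,\AuA\}$, and a count of Euler characteristics together with $e_{\mathrm{top}}(S)=k$ (the Euler characteristic of a pseudo-plane with $\pi_1=\Z_k$) pins down the possible fiber configurations. The hyperbolicity forces exactly the fixed points to sit on these degenerate fibers, and a $\C^*$-action on an $\Ast$-fiber of type $\AuA$ has the two branches as the two one-dimensional orbit closures through an isolated hyperbolic fixed point.

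Next I would analyze the minimal $\P^1$-completion $(\bar S, D, \bar\rho)$ of $\rho$. By the argument in the proof of Lemma~\ref{lem:Gamma+Delta}, a fiber of $\rho$ whose affine part is $\AuA$ closes up in $\bar S$ to a chain of rational curves, and the minimality plus $\Q$-acyclicity (so $D$ is a rational tree) lets me compute the self-intersections: the component of multiplicity $r$ (the maximal multiplicity of an $\A^1$-component, by hypothesis $r\geq 2$) gives a chain of type $[r,1,(2)_{r-1}]$ exactly as in Example~\ref{ex:S(k,r,a)}, while the other degenerate fiber is $k\Ast$, contributing the multiplicity-$k$ multiple fiber matching $\pi_1(S)\cong\Z_k$. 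At this point the weighted dual graph of $(\bar S, D)$ together with the $\C^*$-action on it is completely determined by the pair $(k,r)$, so $\bar S$ is obtained from a Hirzebruch surface by a prescribed sequence of equivariant blow-ups, and $S=\bar S\setminus D$ is therefore $\C^*$-equivariantly isomorphic to one surface in the family $\{S(k,r,a)\}$. The residual discrete parameter $a$ records the gluing datum, i.e. the relative position of the two special fibers or equivalently the local $\Z_k$-weight at the fixed point on the $r$-chain; that $a$ is coprime to $k$ is forced by the freeness of the $\Z_k$-action on the universal cover $\ti S(k,r)$ (Example~\ref{ex:tiS(k,r)}), and $a\in\{1,\dots,k-1\}$ since $S\not\cong\A^2$ excludes the trivial cover. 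Finally, the ambiguity $\lambda\mapsto\lambda^{-1}$ arises because reversing the $\C^*$-action swaps the two ends of the $\Ast$-generic fiber, hence swaps the roles that distinguish $a$ from its inverse-type partner; so the isomorphism is unique exactly up to this automorphism of $\C^*$.

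The heavy lifting here is genuinely the classification of hyperbolic $\C^*$-surfaces in terms of their DPD (Dolgachev--Pinkham--Demazure) presentation or equivalently their completion graphs, and this is precisely what is cited from \cite{MiyMa-hp_with_torus_actions} and \cite{FlZa-covers_of_Cst-pseudoplanes}; I would quote it rather than reprove it. Concretely, one invokes that a smooth affine surface with hyperbolic $\C^*$-action and quotient $\cong\A^1$ is determined up to equivariant isomorphism by a pair of $\Q$-divisors (or by the fiber multiplicity data of $\rho$), and the pseudo-plane and $\Q$-acyclicity constraints cut this data down to the one-parameter family $S(k,r,a)$.

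\textbf{Expected main obstacle.} The delicate point is not the existence of \emph{some} equivariant isomorphism but the bookkeeping of the parameter $a$ and the uniqueness clause: one must check that two surfaces $S(k,r,a)$ and $S(k,r,a')$ are $\C^*$-equivariantly isomorphic if and only if $a'\equiv\pm a\bmod k$, which requires tracking how automorphisms of the completion act on the residual gluing datum and confirming that no further identifications are introduced by automorphisms of $\bar S$ preserving $D$ and the $\C^*$-structure. This rigidity computation — essentially that $\Aut^{\C^*}(\bar S, D)$ acts on the $a$-parameter only through $a\mapsto -a$ — is where the citation to \cite{FlZa-covers_of_Cst-pseudoplanes} does the real work, and reproving it from scratch would mean a careful study of the equivariant automorphism group of the weighted graph.
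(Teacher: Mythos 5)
Your plan is sound in outline and ultimately leans on the same references for the substantive classification, but it takes a genuinely different route from the paper. The paper's proof is much shorter and hinges on one specific observation: for $r\geq 2$ the boundary of a relatively minimal completion of the quotient $\Ast$-fibration is \emph{not} a rational chain, so Gizatullin's theorem gives that $S$ has a \emph{unique} $\A^1$-fibration — and uniqueness of the $\A^1$-fibration is exactly the hypothesis under which the Masuda--Miyanishi characterization (as quoted in the introduction: $S(k,r,a)$, $k,r\geq 2$, are the only pseudo-planes with a hyperbolic $\C^*$-action admitting a unique $\A^1$-fibration) applies verbatim. You instead propose to reconstruct the weighted boundary graph of a minimal completion from the fiber analysis of Lemma~\ref{lem:Gamma+Delta} and then identify the surface via the DPD-type classification of hyperbolic $\C^*$-surfaces; this is a legitimate alternative entry point into the same cited results, and you correctly flag that the delicate part is the residual gluing datum $a$ and the uniqueness clause up to $\lambda\mapsto\lambda^{-1}$, which neither you nor the paper reprove. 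What the paper's route buys is brevity and a hypothesis that matches the cited theorem exactly; what yours buys is a more explicit picture of where the parameters $k$, $r$, $a$ live in the completion data.

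Two small corrections. First, $e_{\mathrm{top}}(S)=1$ for any $\Q$-homology plane (all positive Betti numbers vanish); it is the universal cover $\ti S(k,r)$ that has Euler characteristic $k$, so your Euler-characteristic count of fiber configurations should be run either on $\ti S(k,r)$ or with the value $1$ on $S$. Second, be careful with the claim that the weighted dual graph of $(\bar S,D)$ "completely determines" $\bar S$ as an equivariant blow-up of a Hirzebruch surface: the graph alone does not fix the surface (the centers of the blow-ups carry moduli, which is precisely where $a$ hides), so this step genuinely requires the cited classification rather than a purely combinatorial argument — you acknowledge this, but the phrasing overstates what the graph gives you for free.
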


\begin{proof}Since the $\C^*$-action is hyperbolic, the quotient morphism is a $\C^*$-fibration. Since $S\not\cong \A^2$, by Lemma \ref{lem:Qhp_structure} $k\geq 2$. From the above description we see that for $r\geq 2$ there is a smooth relatively minimal completion of the quotient morphism for which the boundary is not a rational chain, so by a theorem of Gizatullin \cite{Gizatullin_quasihom_aff_surf} $S$ has a unique $\A^1$-fibration. Then we infer the result from  \cite{MiyMa-hp_with_torus_actions} and \cite{FlZa-covers_of_Cst-pseudoplanes}.
\end{proof}

\begin{rem}\label{rem:non-hyperbolic_Cst_action} If an effective $\C^*$-action on a $\Q$-homology plane is not hyperbolic then it is a linear action on $\A^2$.
\end{rem}

\begin{proof}In general, if a $\C^*$-action on a smooth affine variety $S$ has an elliptic or a parabolic fixed point then by \cite[Proposition 1.9]{Rynes-smooth_affine_Cst-surfaces} (which easily follows from \cite{Bass_Haboush-linearizing_group_actions}) $S$ is respectively either equivariantly isomorphic to some affine space with a linear $\C^*$-action or it is a $\C^*$-vector bundle over the smooth quotient $S/\C^*$. In case $S$ is a $\Q$-homology plane the latter quotient is $\Q$-acyclic, hence isomorphic to $\A^1$, so in both cases $S\cong \A^2$ and the $\C^*$-action is linear.
\end{proof}

\medskip
\subsection{\'{E}tale endomorphisms respecting fibrations.}\label{ssec:Etale_endo}

Recall that a morphism $f\:X\to Y$ between varieties is called étale if it is flat and unramified. Since it is flat, it is open and equidimensional (all nonempty fibers of $f$ have the same dimension) and since it is also unramified, it is quasi-finite (its fibers are finite). Assume that $X$ and $Y$ are smooth. If $f\:X\to Y$ is dominant equidimensional then by \cite[23.1]{Matsumura_ring_theory} it is automatically flat. It follows from \cite[Cor.\ 2, \S III.5]{Mumford-red_book} that $f$ is étale if and only if it is a local isomorphism in the complex topology, equivalently, if and only if the differential is an isomorphism for every point of $X$.

For a quasi-finite morphism of smooth quasi-projective varieties $f\:X\to Y$, the Zariski-Nagata purity theorem implies that the complement $E\subset X$ of the étale locus of $f$ is of pure codimension one. It supports an effective, canonically defined \emph{ramification divisor} of $f$, denoted by $R_{f}$, which is linearly equivalent to $f^*K_Y-K_X$, see \cite[pp.\ 202]{Iitaka_AG}. The image of the ramification locus is called the \emph{branching locus}. 

\begin{lem}[\'Etale endomorphisms]\label{lem:etale-basics}
Let $S$ be a smooth variety and let $\eta\:S\to S$ be étale.
\begin{enumerate}
	\item For every reduced divisor $D$ on $S$ the divisor $\eta^*D$ is reduced. 
	\item If $\deg \eta=1$ then $\eta$ is invertible.
	\item If $\eta$ is proper and $e_{top}(S)\neq 0$ then $\eta$ is invertible.
	\item Let $(\ov S,D)$ be a smooth completion of $S$ and let $\Phi_m\:\ov S\map \P_m:=\P(|m(K_{\ov S}+D)|)$ be the $m$-th log canonical map. Then $\Phi_m\circ \eta=p_m(\eta)\circ \Phi_m$ for some $p_m(\eta)\in \Aut \P_m$.
	\item A restriction of $\eta$ to a closed subvariety of $S$ is unramified. In particular, if $\ell\subseteq S$ is isomorphic to $\A^1$ then $\eta(\ell)$ has no unibranched singular points (no cusps) and $\eta_{|\ell}$ has degree $1$. 
\end{enumerate}
\end{lem}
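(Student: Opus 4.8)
The plan is to prove the five assertions essentially independently, using only standard facts about étale and unramified morphisms, the sole nontrivial input being Iitaka's theory of logarithmic plurigenera, which is needed for (4).

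\emph{Parts (1)--(3).} Since $\eta$ is étale it is flat with reduced (indeed étale) fibres, and base change along such a morphism preserves reducedness; hence $\eta^{-1}(D)=D\times_{S,\eta}S$ is reduced whenever $D$ is, and this reduced scheme is exactly the divisor $\eta^*D$, which gives (1). (Analytically: $\eta$ is locally biholomorphic, so a square-free local equation of $D$ pulls back to a square-free one.) For (2), $\deg\eta=1$ forces $\eta$ to be birational; being in addition étale---hence quasi-finite and separated---with normal target, $\eta$ is an open immersion by Zariski's Main Theorem (factor it as an open immersion followed by a finite birational morphism onto the normal surface $S$, which must then be an isomorphism). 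An open immersion $S\mono S$ is an injective endomorphism of a variety, hence surjective (Ax--Grothendieck; in all our applications $S$ is affine, and one may argue directly), so $\eta$ is an isomorphism. (If $\eta$ is moreover finite---as in the situation of (3)---this is immediate: a finite étale self-cover of degree $1$ is trivial.) For (3), a proper quasi-finite morphism is finite, so $\eta$ is finite étale, i.e.\ an analytic covering of degree $d=\deg\eta$; multiplicativity of the topological Euler characteristic in finite covers gives $e_{top}(S)=d\cdot e_{top}(S)$, so $e_{top}(S)\neq 0$ forces $d=1$, and (2) concludes.

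\emph{Part (5).} Unramifiedness is preserved under composition and $\iota\:Z\mono S$ is unramified, so $\eta|_Z=\eta\circ\iota$ is unramified. Now let $\ell\cong\A^1$; then $\eta|_\ell$ is unramified, so its differential is injective at every point of $\ell$. Put $C=\eta(\ell)$, let $\ov C$ be its closure in $S$ and $\nu\:\wt C\to\ov C$ the normalization. As $\ell$ is normal, $\eta|_\ell$ lifts to a dominant morphism $\mu\:\ell\to\wt C$ with $\eta|_\ell=\nu\circ\mu$, and $\mu$ is again unramified, hence étale (a dominant unramified morphism of smooth curves is flat). Since $\ell\cong\A^1$ dominates $\wt C$, the curve $\wt C$ is rational and its smooth completion is $\P^1$; extending $\mu$ to a finite morphism $\P^1\to\P^1$ and noting that it is unramified except possibly at the single point $\P^1\setminus\A^1$ (because $\mu$ is étale over $\wt C$ and maps $\A^1$ into $\wt C$), a short Riemann--Hurwitz count forces $\deg\mu=1$; hence $\deg(\eta|_\ell)=\deg\mu\cdot\deg\nu=1$. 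Finally, if $q=\eta(p)\in C$ were a unibranched singular point, then $\nu^{-1}(q)$ would consist of a single point $\wt q$ at which $d\nu$ vanishes, so $d(\eta|_\ell)_p=d\nu_{\wt q}\circ d\mu_p=0$, contradicting the injectivity of the differential of $\eta|_\ell$; thus $\eta(\ell)$ has no cusps.

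\emph{Part (4).} This is Iitaka's theorem on the functoriality of logarithmic plurigenera, and it is the one point that uses nontrivial machinery. Choose a common log resolution: a smooth completion $(\ov T,D_T)$ of $S$ together with a birational morphism $\sigma\:\ov T\to\ov S$ and a morphism $\ov\eta\:\ov T\to\ov S$ extending $\eta$. The logarithmic ramification formula gives $K_{\ov T}+D_T=\ov\eta^*(K_{\ov S}+D)+R$ with $R\geq 0$ supported on $D_T$ (its interior part vanishes because $\eta$ is étale), so $\ov\eta^*$ maps $H^0(\ov S,m(K_{\ov S}+D))$ into $H^0(\ov T,m(K_{\ov T}+D_T))$; since logarithmic plurigenera spaces are birational invariants of log-smooth pairs, the latter is canonically identified with $V_m:=H^0(\ov S,m(K_{\ov S}+D))$, and we obtain an endomorphism $\eta^*$ of the finite-dimensional space $V_m$. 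As $\ov\eta$ is dominant, $\eta^*$ is injective, hence bijective, and therefore induces an automorphism $p_m(\eta)$ of $\P_m=\P(|m(K_{\ov S}+D)|)$; the identity $\Phi_m\circ\eta=p_m(\eta)\circ\Phi_m$ is then immediate from the definition of $\Phi_m$ by this linear system. We refer to \cite{Iitaka_AG} for the logarithmic ramification formula and the birational invariance of logarithmic plurigenera. The main obstacle is thus packaged into this classical input; in (1)--(3) and (5) the only slightly technical point is the Riemann--Hurwitz case-check in (5).
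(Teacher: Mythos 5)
Your proof is correct and follows essentially the same route as the paper's: (1)--(3) via unramifiedness/étale base change, Zariski's Main Theorem plus Ax--Grothendieck, and multiplicativity of the topological Euler characteristic for finite étale covers; (4) by Iitaka's logarithmic ramification argument (which the paper simply cites as Proposition 11.9 of \cite{Iitaka_AG}); and (5) by lifting to the normalization and a Riemann--Hurwitz count on $\P^1$. The only substantive difference is that you spell out the ``no cusps'' claim in (5) explicitly, via the vanishing of the differential of the normalization at a point over a unibranched singularity, a step the paper leaves implicit.
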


\begin{proof} 
(1) This follows from the fact that $\eta$ is unramified, see e.g. \cite[Proposition I.3.2 and Definition I.3.2]{SGA1}.

(2) If $\deg \eta=1$ then $\eta$ is an open embedding, hence an isomorphism by the Ax-Grothendieck theorem \cite[Theorem 10.4.11]{EGA-IV-3}.

(3) Since $\eta$ is étale, it has finite fibers, so being proper, it is finite by \cite[Theorem 8.11.1]{EGA-IV-3}. Finite étale morphisms are covers in the complex topology, so we get $e_{top}(S)=\deg \eta\cdot e_{top}(S)$, and hence $\deg \eta=1$.  By (2) $\eta$ is invertible. 

(4) See \cite[Propositon 11.9]{Iitaka_AG}.

(5) The first statement follows from the fact that a closed immersion is unramified and that the composition of two unramified morphisms is unramified. Assume $\ell\subseteq S$ is isomorphic to $\A^1$. Let $\eta'\:\ell \to \ell'$ be the lift of $\eta|_{\ell}$ to the normalization $\ell'$ of $\eta(\ell)$. Then $\kappa(\ell')\leq \kappa(\ell)$, so $\ell'$ is isomorphic to $\A^1$ or $\P^1$. But $\eta'$ is unramified, so it extends to a, necessarily finite, endomorphism of $\P^1$ with at most one ramification point. By the Riemann-Hurwitz formula this endomorphism has degree $1$, so $\eta|_{\ell}$ has degree $1$. 
\end{proof}

Given a variety $S$ we denote the monoid of its étale endomorphisms by $\Et(S)$.

\begin{lem}[Lift to the universal cover]\label{lem:lifting_eta} 
Let $\pi\:\ti S\to S$ be a finite étale cover of smooth varieties. Assume $\ti S$ is simply connected. Then every $\eta\in \Et(S)$ lifts to some $\ti\eta\in \Et(\ti S)$ such that $\pi\circ\ti \eta=\eta\circ \pi$. In particular, $\deg \ti \eta=\deg \eta$, so $\eta\in \Aut(S)$ if and only if $\ti \eta \in \Aut(\ti S)$.
\end{lem}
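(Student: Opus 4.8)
The plan is to build the lift algebraically via a fibre product, which is cleaner than first invoking the topological lifting criterion for the covering map $\pi$ and then having to argue that the resulting continuous map is a morphism of varieties. Put $f:=\eta\circ\pi\colon\ti S\to S$; this is étale, being a composition of étale morphisms. Form the fibre product $Z:=\ti S\times_S\ti S$ in which the first factor maps to $S$ via $f$ and the second via $\pi$, and let $p_1,p_2\colon Z\to\ti S$ be the two projections, so that $p_1$ is the base change of $\pi$ along $f$ and $p_2$ is the base change of $f$ along $\pi$. In particular $p_1$ is finite étale and $p_2$ is étale. Moreover $Z\neq\emptyset$: since $\pi$ is surjective, for every $a\in\ti S$ there is $b\in\ti S$ with $\pi(b)=\eta(\pi(a))=f(a)$, and then $(a,b)\in Z$.

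The key input is the hypothesis that $\ti S$ is simply connected, which forces every finite étale cover of $\ti S$ to be trivial. Applying this to $p_1$, choose a connected component $Z_0$ of $Z$; it is non-empty and, since $p_1$ is both open (étale) and closed (finite), $p_1(Z_0)=\ti S$, so $p_1|_{Z_0}\colon Z_0\to\ti S$ is a connected finite étale cover, necessarily of degree $1$, hence an isomorphism by Lemma~\ref{lem:etale-basics}(2). Now set $\ti\eta:=p_2\circ(p_1|_{Z_0})^{-1}\colon\ti S\to\ti S$. Chasing definitions, $\pi\circ\ti\eta=\pi\circ p_2\circ(p_1|_{Z_0})^{-1}=f\circ p_1\circ(p_1|_{Z_0})^{-1}=f=\eta\circ\pi$, which is the desired compatibility; and $\ti\eta$ is étale because it is the composition of the isomorphism $(p_1|_{Z_0})^{-1}$ with the étale morphism $p_2|_{Z_0}$. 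Hence $\ti\eta\in\Et(\ti S)$.

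For the numerical statement, note that $\pi$, $\eta$ and $\ti\eta$ are all dominant and generically finite (for $\ti\eta$: it is étale, hence quasi-finite, and dominant because $\pi\circ\ti\eta=\eta\circ\pi$ is), so degrees are multiplicative along $\pi\circ\ti\eta=\eta\circ\pi$, giving $\deg\pi\cdot\deg\ti\eta=\deg\eta\cdot\deg\pi$; cancelling the finite positive integer $\deg\pi$ gives $\deg\ti\eta=\deg\eta$. Finally, by Lemma~\ref{lem:etale-basics}(2) an étale self-map of a smooth variety lies in the automorphism group precisely when it has degree $1$, so $\eta\in\Aut(S)\iff\deg\eta=1\iff\deg\ti\eta=1\iff\ti\eta\in\Aut(\ti S)$.

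The one point that genuinely needs justification is the assertion that a finite étale cover of the simply connected variety $\ti S$ is trivial. I would prove it by passing to the complex topology: a finite étale morphism analytifies to a finite topological covering, so each connected component of its total space is a connected covering space of the simply connected base $\ti S$ and is therefore mapped homeomorphically onto $\ti S$; such a component is thus étale of degree $1$ over $\ti S$, i.e.\ an isomorphism of varieties. (Alternatively, one can cite the triviality of the algebraic fundamental group of $\ti S$ together with the classification of finite étale covers.) All the remaining ingredients — stability of étale and finite étale morphisms under composition and base change, and multiplicativity of degrees — are standard.
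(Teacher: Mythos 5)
Your proof is correct and follows essentially the same route as the paper: the paper forms the fiber product $S'$ of $\eta$ and $\pi$ and then obtains the lift from a section of the further pullback $\ti S\times_S S'\to\ti S$, which is trivial by simple connectedness, and your $Z=\ti S\times_{\eta\circ\pi,S,\pi}\ti S$ is exactly that double fiber product assembled in one step, with the choice of the component $Z_0$ playing the role of the section. The only cosmetic caveat is that Lemma~\ref{lem:etale-basics}(2) is stated for endomorphisms, so to conclude that the degree-one finite étale surjection $p_1|_{Z_0}$ is an isomorphism you should instead invoke the standard fact (or your own analytic argument) directly.
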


\begin{proof} First recall that $\pi$ has the universal property that it factors through every finite étale cover $f\:X\to S$ of $S$. Indeed the existence of a morphism $\pi_X\:\ti S\to X$ such that $\pi=f\circ \pi_X$ is equivalent to the existence of a section of the projection $\ti S\times_S X\to \ti S$. But the latter, being a pullback of $f$, is a finite étale cover of $\ti S$, so since $\ti S$ is simply connected, it is a trivial cover, which therefore admits a section. Let now $(S',\eta',\pi')$ be the fiber product of $\eta$ and $\pi$. Then  $\eta'\:S'\to \ti S$ is étale of degree $\deg \eta'=\deg \eta$ while  $\pi'\:S'\to S$ is étale and finite, of degree $\deg \pi'=\deg \pi$. By the universal property of $\pi$, there exists a morphism $\alpha\:\ti S\to S'$ such that $\pi'\circ \alpha=\pi$, and then $\ti \eta=\eta'\circ \alpha$ is as required.
\end{proof}

\begin{notation}\label{dfn:Et(S,rho)}
If $\rho\: S\to B$ is a fibration of algebraic varieties and an étale endomorphism $\eta$ of $S$ respects $\rho$ then by Definition \ref{dfn:fib-preserving}, there exists an endomorphism $\eta_\rho\:B\to B$, such that $\rho\circ \eta=\eta_\rho\circ \rho$. We denote by $\Et(S,\rho)$ the submonoid of $\Et(S)$ consisting of étale endomorphisms respecting $\rho$.
\end{notation}

The following Lemma is a minor improvement of  \cite[3.1]{GurjarMiyanishi-GJC_for_Qhp}.

\begin{lem}[Condition on multiplicities for étale endomorphisms]\label{lem:multiplicities}
Let $\rho_i\:S_i\to B_i$, $i=1,2$ be fibrations of smooth surfaces over smooth curves and let $\eta\: S_1\to S_2$ be an endomorphism respecting $\rho_1$ and $\rho_2$, with the induced morphism $\varphi\: B_1 \to B_2$. Let $e(\varphi,p)$ denote the ramification index of $\varphi$ at $p\in B_1$. If $\eta$ is étale then  for every irreducible curve $C$ contracted by $\rho_1$ we have 
\begin{equation}\mu(\eta(C))=e(\varphi,\rho(C))\cdot \mu(C). \label{eq:multiplicities}
\end{equation} Conversely, if \eqref{eq:multiplicities} holds for every irreducible curve $C$ contracted by $\rho_1$ and $\eta$ is étale on general fibers of $\rho_1$ then $\eta$ is étale.
\end{lem}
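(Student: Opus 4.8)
The plan is to analyze the local structure of $\eta$ along a contracted curve $C$ by passing to formal or analytic neighborhoods of generic points, where the fibration becomes a map of discs and everything is governed by orders of vanishing. First I would set up notation: let $C$ be an irreducible curve contracted by $\rho_1$, let $p=\rho_1(C)\in B_1$, let $q=\varphi(p)\in B_2$, and let $C'=\eta(C)$, which is contracted by $\rho_2$ and satisfies $\rho_2(C')=q$. Choosing local parameters $t$ at $p$ on $B_1$ and $s$ at $q$ on $B_2$, the relation $\rho_2\circ\eta=\varphi\circ\rho_1$ says that, as an ideal on $S_1$ near the generic point of $C$, the pullback $\eta^*(s)$ equals (up to a unit) $\varphi^*(s)$ viewed via $\rho_1$, i.e. $\eta^*(s)=u\cdot t^{e(\varphi,p)}$ for a unit $u$. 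Now I would compute the multiplicity of $C'$ in the fiber $\rho_2^*(q)$: by definition $\rho_2^*(q)$ is cut out near the generic point of $C'$ by $s$, and its multiplicity along $C'$ is $\operatorname{ord}_{C'}(s)=\mu(C')$. Pulling back, $\operatorname{ord}_C(\eta^*s)=\mu(C)\cdot(\text{ramification of }\eta\text{ along }C)+\text{(order contributed by }\mu(C')\text{ pulled back)}$.

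The key input is that $\eta$ being étale forces the ramification divisor $R_\eta$ to be zero, hence $\eta$ does not ramify along any divisor; in particular $\eta^*C'$ is reduced along $C$ by Lemma \ref{lem:etale-basics}(1), so $\eta^*(\text{divisor }C')$ has coefficient exactly $1$ at $C$. Therefore, comparing orders of vanishing of $s$ along $C$ in two ways — once through $B_1$ via $\varphi\circ\rho_1$, giving $e(\varphi,p)\cdot\mu(C)$, and once through $S_2$ via $\rho_2\circ\eta$, giving $\mu(C')\cdot\operatorname{ord}_C(\eta^*C')=\mu(C')\cdot 1$ — yields $\mu(\eta(C))=e(\varphi,\rho(C))\cdot\mu(C)$, which is \eqref{eq:multiplicities}. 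I would phrase this cleanly using the identity of divisors $\eta^*\rho_2^*(q)=\rho_1^*\varphi^*(q)$ on $S_1$ and reading off the coefficient of $C$ on both sides: on the right it is $e(\varphi,p)\mu(C)$ by definition of ramification index and fiber multiplicity, while on the left, since $\rho_2^*(q)=\sum\mu(C_j')C_j'+\cdots$ and $\eta^*$ of a reduced curve is reduced, the coefficient of $C$ equals $\mu(C')$ (the unique component $C_j'$ with $C\subseteq\eta^{-1}(C_j')$ being $C'=\eta(C)$, appearing with multiplicity one in $\eta^*C'$, the other components not passing through the generic point of $C$).

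For the converse, I would argue as follows. Suppose \eqref{eq:multiplicities} holds for every contracted curve $C$ and $\eta$ is étale on general fibers of $\rho_1$. The ramification divisor $R_\eta$, if nonzero, is supported on a union of curves; a curve in its support is either dominant over $B_1$ (impossible, since then it meets a general fiber and $\eta$ would ramify there, contradicting étaleness on general fibers) or contracted by $\rho_1$. So it suffices to show $\eta$ is unramified along every contracted curve $C$. Running the coefficient computation of the first part in reverse: the identity of divisors $\eta^*\rho_2^*(\varphi(p))=\rho_1^*\varphi^*(p)$ still holds (it is just functoriality of pullback of Cartier divisors), so the coefficient of $C$ on the right is $e(\varphi,\rho_1(C))\mu(C)$, while on the left it is $\operatorname{ord}_C(\eta^*C')\cdot\mu(C')+(\text{contributions of other components of }\rho_2^*(\varphi(p))\text{ through the generic point of }C)$. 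The hypothesis \eqref{eq:multiplicities} forces $\operatorname{ord}_C(\eta^*C')=1$ and no other component contributes, i.e. $\eta$ does not ramify along $C$. Hence $R_\eta=0$, and since $\eta$ is quasi-finite (being étale on a dense open set and finite-to-one generically) and a morphism of smooth surfaces with trivial ramification divisor is étale by purity (Zariski–Nagata, as recalled before Lemma \ref{lem:etale-basics}), we conclude $\eta$ is étale.

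The main obstacle I anticipate is bookkeeping the local contributions correctly: one must be careful that $\rho_2^*(\varphi(p))$ may have several components, and only one of them — namely $\eta(C)$ — can pass through the generic point of $C$, so that the coefficient extraction on the left-hand side genuinely isolates $\mu(\eta(C))\cdot\operatorname{ord}_C(\eta^*\eta(C))$. Making this precise requires noting that $\eta$ is quasi-finite, so $\eta(C)$ is a curve (not a point) and $\eta^{-1}(\eta(C))$ contains $C$ as a component, and that distinct components of a fiber of $\rho_2$ have disjoint generic-point preimages under the quasi-finite map $\eta$. Once this local picture is pinned down, both directions reduce to reading off a single coefficient in an equality of effective divisors, and Lemma \ref{lem:etale-basics}(1) does the rest.
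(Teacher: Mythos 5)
Your proof is correct and takes essentially the same route as the paper's: both read off the coefficient of $C$ in the divisor identity $\eta^*\rho_2^*(q)=\rho_1^*\varphi^*(q)$, use Lemma \ref{lem:etale-basics}(1) to get reducedness of $\eta^*(\eta(C))$ in the forward direction, and for the converse combine the Zariski--Nagata purity theorem with flatness of the quasi-finite map $\eta$ (the paper cites miracle flatness here, which your appeal to purity implicitly needs) to conclude the ramification divisor is trivial.
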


\begin{proof}Let $C_1$ be an irreducible curve contracted by $\rho_1$. Put $C_2=\eta(C_1)$, $p_i=\rho_i(C_i)$ and $e=e(\varphi,p_1)$. Write $\eta^*(C_2)=mC_1+R$ for some $m\geq 1$ and an effective divisor $R$ contained in fibers and not containing $C_1$. Since $\eta^*\circ\rho_2^*(p_2)=\rho_1^*\circ\varphi^*(p_2)$, looking at the coefficient of $C_1$ in the latter divisor we get
\begin{equation}\mu(C_2)m=e\mu(C_1). \end{equation}

If $\eta$ is étale then $m=1$ by \ref{lem:etale-basics}(1), which gives  \eqref{eq:multiplicities}. For the converse implication, we note that since $\eta$ is quasi-finite, it is flat by \cite[23.1]{Matsumura_ring_theory}. Because $S_1$ is smooth, the Zariski-Nagata purity theorem implies that the ramification locus of $\eta$ is either empty or has pure codimension $1$. But by \eqref{eq:multiplicities} $m=1$ for every $C_1$ as above, so the ramification divisor has no components contained in fibers. By assumption $\eta$ is étale along general fibers of $\rho_1$, hence the ramification divisor of $\eta$ is trivial.
 \end{proof}

\begin{example} Let $B$, $C$ be smooth curves and let $\eta_C$ be a dominating endomorphism of $C$. Then $\eta=\id_B\times \eta_C$ is an endomorphism of $(B\times C,\pr_B)$ which trivially satisfies \eqref{eq:multiplicities} and which is étale if and only if $\eta_C$ is étale. Note that if $C\neq \A^1, \P^1$ then $\kappa(C)\geq 0$, so by \cite[Theorem 11.7]{Iitaka_AG} every dominating endomorphism of $C$ is automatically étale. On the other hand, dominating endomorphisms of degee at least $2$ are never étale for $C=\A^1, \P^1$.
\end{example}

\begin{lem}[One non-reduced fiber]\label{lem:when_etaB_has_deg=1} 
Let $S$ be a smooth surface equipped with a fibration $\rho\:S\to B$ with at most one non-reduced fiber. Then every $\eta\in \Et(S,\rho)$ induces a (finite) étale endomorphism of $B$.
\end{lem}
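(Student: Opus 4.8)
The plan is to analyze the induced endomorphism $\varphi:=\eta_{\rho}\colon B\to B$ of the base curve and to prove it is finite and unramified. First I would note that $\varphi$ is dominant: $\eta$ is étale, hence open, so $\eta(S)$ is dense in the irreducible surface $S$; composing with the surjection $\rho$ and using $\rho\circ\eta=\varphi\circ\rho$ shows $\varphi\circ\rho$ is dominant, hence so is $\varphi$. Being a dominant morphism between two copies of the same curve $B$, $\varphi$ is finite by Lemma \ref{prop:GJC(1)_holds}, and, being finite and dominant onto a smooth curve, it is flat (cf.\ \cite[23.1]{Matsumura_ring_theory}). Since an étale morphism is precisely a flat unramified one, it remains to show $\varphi$ is unramified. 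Assume, for contradiction, that $\varphi$ ramifies at some $p\in B$, i.e.\ $e:=e(\varphi,p)\geq 2$, and set $n=\deg\varphi$.

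The key step is local and uses Lemma \ref{lem:multiplicities}. Pick any irreducible component $C$ of the nonempty fiber $\rho^{*}(p)$. As $\eta$ is quasi-finite it does not contract $C$, so $\eta(C)$ is a component of $\rho^{*}(\varphi(p))$, and by \eqref{eq:multiplicities} its multiplicity is $\mu(\eta(C))=e\cdot\mu(C)\geq 2$. Hence $\rho^{*}(\varphi(p))$ is non-reduced, so by hypothesis $\varphi(p)=q_{0}$, where $\rho^{*}(q_{0})$ is the unique non-reduced fiber of $\rho$. Therefore \emph{every} branch point of $\varphi$ equals $q_{0}$, and the Riemann--Hurwitz identity for the finite morphism $\varphi$ reads
\[
(n-1)\,e_{top}(B)=\sum_{b\in B}(e(\varphi,b)-1)=\sum_{b\in\varphi^{-1}(q_{0})}(e(\varphi,b)-1)=n-|\varphi^{-1}(q_{0})|.
\]

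Next I would control $\varphi$ at $q_{0}$ itself. If $\varphi(q_{0})\neq q_{0}$, then $\rho^{*}(\varphi(q_{0}))$ is reduced, so $\eta^{*}\rho^{*}(\varphi(q_{0}))$ is reduced by Lemma \ref{lem:etale-basics}(1); but $\eta^{*}\rho^{*}=\rho^{*}\varphi^{*}$ and $\rho^{*}\varphi^{*}(\varphi(q_{0}))$ contains $\rho^{*}(q_{0})$ as a subdivisor, which is non-reduced --- a contradiction. Hence $\varphi(q_{0})=q_{0}$. Then $\eta$ maps the fiber $\rho^{*}(q_{0})$ into itself, so it induces a self-map $C\mapsto\overline{\eta(C)}$ of the finite set of irreducible components of $\rho^{*}(q_{0})$, along which, by \eqref{eq:multiplicities}, the fiber multiplicity gets multiplied by $e(\varphi,q_{0})$; since the (finitely many) multiplicities are bounded, iterating this self-map forces $e(\varphi,q_{0})=1$.

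It remains to derive the contradiction from the displayed identity. If $e_{top}(B)\leq 0$ it gives $|\varphi^{-1}(q_{0})|\geq n$, hence $|\varphi^{-1}(q_{0})|=n$ and all ramification indices equal $1$, contradicting that $\varphi$ is ramified. If $e_{top}(B)\geq 2$, i.e.\ $B\cong\P^{1}$, it gives $|\varphi^{-1}(q_{0})|\leq 2-n\leq 0$, contradicting that fibers are nonempty. Finally, if $e_{top}(B)=1$, i.e.\ $B\cong\A^{1}$, it gives $|\varphi^{-1}(q_{0})|=1$, so $\varphi$ is totally ramified over $q_{0}$ with index $n\geq 2$ at its unique preimage $p_{1}$; but then either $p_{1}=q_{0}$, contradicting $e(\varphi,q_{0})=1$, or $p_{1}\neq q_{0}$, forcing $\varphi(q_{0})\neq q_{0}$, which we excluded. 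In every case we reach a contradiction, so $\varphi$ is unramified, hence a finite étale endomorphism of $B$. I expect the only delicate point to be the case $\varphi(q_{0})=q_{0}$: there the reducedness argument of the previous paragraph does not apply (the non-reduced fiber may legitimately map into itself), and one genuinely has to combine the ``multiplicities scale under iteration'' observation with the global Riemann--Hurwitz count. Throughout, one should be careful to use only that $\eta$ is étale --- equivalently, that it pulls back reduced divisors to reduced divisors and scales fiber multiplicities as in Lemma \ref{lem:multiplicities} --- since $\eta$ is not assumed proper and no surjectivity or fiberwise-degree statement about $\eta$ is available.
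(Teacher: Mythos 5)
Your proof is correct and follows essentially the same route as the paper's: finiteness via the curve version of GJC, the observation that ramification of $\eta_\rho$ forces a non-reduced image fiber (so all branching lies over the unique such point $q_0$), an Euler-characteristic/Riemann--Hurwitz count forcing $B\cong\A^1$ with total ramification over $q_0$ at a single point, the fixed-point identity $\eta_\rho(q_0)=q_0$, and finally the iteration of $\eta$ on the components of $\rho^*(q_0)$ to contradict reducedness of étale pullbacks. The only cosmetic differences are that you dispose of the case $e_{top}(B)\leq 0$ directly from Riemann--Hurwitz where the paper cites Iitaka to reduce to $B\cong\A^1$ or $\P^1$, and your "multiplicities grow unboundedly under iteration" phrasing replaces the paper's "pass to $\eta^{\circ k}$ fixing a component $A$ and contradict reducedness of $\eta^*A$" --- these are the same idea.
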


\begin{proof} By Lemma \ref{prop:GJC(1)_holds} $\eta_\rho$ is finite. Suppose that $\eta_\rho\:B\to B$ has ramification index $d>1$ at some point $p\in B$. By \cite[Prop. 11.7]{Iitaka_AG} $B\cong \A^1$ or $\P^1$. Let $q=\eta_\rho(p)$ and for any $u\in B$ put $F_u=\rho^*(u)$. By definition $\eta_\rho^*(q)-dp$ is effective, hence $\rho^*(\eta_\rho^*(q)-dp) =\eta^*(\rho^*(q))-d\rho^*(p) =\eta^*(F_q)-d F_p$ is effective, so $\eta^*(F_q)$ is a non-reduced divisor and, because $\eta$ is étale, the fiber $F_q$ is non-reduced by Lemma \ref{lem:etale-basics}(1). So if $\rho$ has no multiple fiber then we are done. Otherwise, $F_q$ must be the unique non-reduced fiber of $\rho$. Since $\eta_\rho$ is finite, the restriction $B\setminus \eta_\rho^{-1}(q)\to B\setminus \{q\}$ is finite and étale, hence  $e_{top}(B\setminus \eta^{-1}_\rho(q))=\deg \eta_\rho\cdot e_{top}(B\setminus \{q\})$. Since $\deg \eta_\rho\geq d>1$, the latter is impossible for $B=\P^1$, hence $B=\A^1$ and $e_{top}(\eta_\rho^{-1}(q))=0$, so $\eta_\rho^{-1}(q)=\{p\}$, $\deg \eta_\rho=d$ and $\eta^*(F_q)=\deg \eta_\rho\cdot  F_p$. Thus $\eta_\rho$ is a cyclic cover totally ramified over $q$. Since $F_q$ contains a non-reduced component, by Lemma \ref{lem:etale-basics}(1) the fiber over $\eta_\rho(q)$ contains one too, so $\eta_\rho(q)=q$ and hence $p=q$. We get $\eta^*(F_q)=\deg \eta_\rho\cdot  F_q$. Replacing $\eta$ with some $\eta^k$, $k\geq 1$ we may assume that $\eta$ maps some irreducible component $A$ of $F_q$ to itself. We get $\eta^*(\mu(A)A)=(\deg \eta_\rho)\cdot \mu(A)A$, hence $\eta^*(A)=\deg \eta_\rho\cdot A$. But on the other hand, since $\eta$ is étale, $\eta^*A$ is reduced by Lemma \ref{lem:etale-basics} (1); a contradiction. Thus $\eta_\rho$ is étale.
\end{proof}

\begin{cor}\label{cor:Et(S,p)-for_A1-fib_pseudo-planes} If an étale endomorphism of a pseudo-plane respects some $\A^1$-fibration then it is an automorphism.
\end{cor}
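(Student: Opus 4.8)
The statement is essentially the conjunction of Lemmas \ref{lem:Qhp_structure}, \ref{lem:when_etaB_has_deg=1} and parts (2), (5) of Lemma \ref{lem:etale-basics}, so the plan is to assemble these with no new computation. Fix an $\A^1$-fibration $\rho\colon S\to B$ which $\eta$ respects, with induced endomorphism $\eta_\rho\colon B\to B$. The strategy is to show separately that $\eta_\rho$ has degree $1$ and that $\eta$ has degree $1$ along a general fiber of $\rho$; multiplying, $\deg\eta=1$, and then Lemma \ref{lem:etale-basics}(2) gives that $\eta$ is an automorphism.

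First I would run the fiber analysis. By Lemma \ref{lem:Qhp_structure} the base $B$ is isomorphic to $\A^1$, all reduced fibers of $\rho$ are isomorphic to $\A^1$, and $\pi_1(S)\cong\Z_{\mu_1}*\cdots*\Z_{\mu_n}$ where $\mu_1F_1,\ldots,\mu_nF_n$ are the degenerate fibers. Since $S$ is a pseudo-plane, $\pi_1(S)$ is cyclic, hence finite by the last sentence of Lemma \ref{lem:Qhp_structure}; but a free product $\Z_{\mu_1}*\cdots*\Z_{\mu_n}$ with more than one factor of order $\geq 2$ is infinite, so at most one $\mu_i$ exceeds $1$, i.e.\ $\rho$ has at most one non-reduced fiber. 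Then Lemma \ref{lem:when_etaB_has_deg=1} applies and shows that $\eta_\rho\colon B\to B$ is a finite étale endomorphism. As $B\cong\A^1$ is simply connected, a finite étale self-map of $B$ has degree $1$ (alternatively this follows from Lemma \ref{prop:GJC(1)_holds} and Lemma \ref{lem:etale-basics}(2), or directly from Riemann--Hurwitz), so $\eta_\rho\in\Aut(B)$.

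Next I would control $\eta$ along a general fiber. Pick a general point $y\in S$ and set $b=\rho(y)$, so the fiber $F:=\rho^{-1}(b)$ is reduced and isomorphic to $\A^1$; since $\eta_\rho$ is bijective, $\eta^{-1}(\rho^{-1}(b))=\rho^{-1}(\eta_\rho^{-1}(b))$ is the single fiber $F':=\rho^{-1}(\eta_\rho^{-1}(b))$, which for general $y$ is again isomorphic to $\A^1$. Thus $\eta^{-1}(y)\subseteq F'$, and $\eta|_{F'}\colon F'\to F$ is a non-constant morphism between affine lines ($\eta$ is quasi-finite, so it contracts nothing), hence of degree $1$ by Lemma \ref{lem:etale-basics}(5). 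Therefore $\#\eta^{-1}(y)=1$ for general $y$, i.e.\ $\deg\eta=1$, and Lemma \ref{lem:etale-basics}(2) yields $\eta\in\Aut(S)$. There is no serious obstacle here; the only point deserving a word of care is the reduction ``$\pi_1(S)$ cyclic $\Rightarrow$ at most one non-reduced fiber,'' which is needed precisely to make Lemma \ref{lem:when_etaB_has_deg=1} applicable, and everything else is a direct citation.
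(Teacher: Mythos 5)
Your proposal is correct and follows essentially the same route as the paper: reduce via the cyclic fundamental group to at most one non-reduced fiber, apply Lemma \ref{lem:when_etaB_has_deg=1} to get $\eta_\rho\in\Aut(B)$, use Lemma \ref{lem:etale-basics}(5) to see $\deg\eta=\deg\eta_\rho=1$, and conclude with Lemma \ref{lem:etale-basics}(2). The only difference is that you spell out the fiberwise degree argument that the paper compresses into a single citation of Lemma \ref{lem:etale-basics}(5).
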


\begin{proof} Let $\rho\:S\to B$ be an $\A^1$-fibration of a pseudo-plane $S$ and let $\eta\in\Et(S,\rho)$. By Lemma \ref{lem:etale-basics}(5) $\deg \eta=\deg \eta_\rho$. By Lemma \ref{lem:Qhp_structure} $p$ has at most one degenerate fiber, so Lemma \ref{lem:when_etaB_has_deg=1} says that $\eta_\rho$ is an automorphism, and hence $\eta$ is an automorphism by Lemma \ref{lem:etale-basics}(2).\end{proof}

Let $\rho\:S\to B$ be a fibration of some smooth surface $S$ onto a smooth curve $B$ and let $\eta\:S\to S$ be an endomorphism respecting $\rho$. Denote by $\eta_\rho$ the endomorphism induced on the base and by $(S',\rho',\eta_\rho')$ the normalized fiber product of $\rho$ and $\eta_\rho\:B\to B$. Since $S$ is smooth, hence normal, by the universal properties of the fiber product and of the normalization there exists a unique morphism $j\:S\to S'$ such that the following diagram commutes:
\begin{equation}\label{diagram:factorization}
\xymatrix@C=2.5pc{S \ar[dr]_{\rho} \ar@/^2pc/[rr]^\eta \ar@{.>}[r]^{\exists ! j} & S' \ar[d]^{\rho'}  \ar[r]^{\eta_\rho'} & S \ar[d]^{\rho} \\ & B \ar[r]^{\eta_\rho} & B. }
\end{equation}

As we will see now, in many cases $j$ is an open embedding.

\begin{lem}\label{lem:factorization_sigma=id}
Let $\rho\:S\to B$ be a fibration of a smooth affine surface over a smooth curve $B$. Assume that $\eta\in \Et(S,\rho)$ and one of the following holds:
\begin{enumerate}
\item general fibers of $\rho$ are not isomorphic to $\Ast$,
\item $\rho$ is the quotient $\Ast$-fibration of some $\C^*$-action on $S$ or
\item $\rho$ is an untwisted $\Ast$-fibration with at least one fiber having a reducible connected component.
\end{enumerate}
Then, with the notation as above, $\eta_\rho'$ is finite and $j$ is an open embedding, both $\C^*$-equivariant in case (2). In particular $\deg \eta=\deg \eta_\rho$.
\end{lem}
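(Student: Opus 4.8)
The plan is to first settle the ``formal'' parts, then reduce the whole statement to the claim that $\eta$ has degree $1$ on a general fibre of $\rho$, and finally establish that claim by three unrelated arguments, one per case — case (3) being where the real work lies. First, since $\eta$ is étale it is dominant, so $\eta_\rho\colon B\to B$ is a dominant endomorphism of a smooth curve, hence finite by Lemma~\ref{prop:GJC(1)_holds}; therefore $\eta_\rho'$, being the pullback of $\eta_\rho$ along $\rho$ followed by a normalisation, is finite. The other leg $S\times_BB\to B$ is the pullback of $\rho$ along $\eta_\rho$; as $\rho$ is flat over the smooth curve $B$ with geometrically integral generic fibre $C$ (general fibres of a fibration being irreducible and reduced), the scheme $S\times_BB$ is integral, so $S'$ is a normal irreducible surface and $\deg\eta_\rho'=\deg\eta_\rho$. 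From $\eta=\eta_\rho'\circ j$ one gets $\deg\eta=\deg\eta_\rho\cdot\deg j$. Since $\eta$ is quasi-finite so is $j$, and $S,S'$ are normal, so Zariski's Main Theorem makes $j$ an open embedding as soon as $\deg j=1$, i.e.\ as soon as $\deg\eta=\deg\eta_\rho$ (which is also the last sentence of the lemma). Finally, for general $q\in B$ one has $\eta_\rho^{-1}(q)=\{b_1,\dots,b_d\}$ with $d=\deg\eta_\rho$ and $\deg\eta=\sum_i\deg\big(\eta|_{\rho^{-1}(b_i)}\big)$, each summand being the degree of $\eta$ on a general fibre. So it remains to prove that $\eta$ restricts to a degree~$1$ morphism on a general fibre $F$ of $\rho$.

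For case (1) I would argue: $F$ is a smooth affine curve with $F\not\cong\Ast$, hence $F\cong\A^1$ or $\kappa(F)=1$, and in both cases $e_{top}(F)\neq0$; since source and target carry the same fibration, $\eta|_F$ maps $F$ onto a general fibre $F'\cong F$, it is unramified by Lemma~\ref{lem:etale-basics}(5) and finite by Lemma~\ref{prop:GJC(1)_holds}, so it is a finite étale cover between curves of equal non-zero Euler characteristic and thus has degree~$1$. For case (2) — where $\eta$ must be taken $\C^*$-equivariant (without that hypothesis $j$ need not be injective, e.g.\ $(x,y)\mapsto(x,y^2)$ on $\C^*\times\C^*$ with $\rho=\pr_1$) — the fibre $F$ is a general free $\C^*$-orbit, equivariantly isomorphic to $\C^*$ with its translation action, so $\eta|_F$ is an equivariant self-map of $\C^*$, i.e.\ a translation, in particular an isomorphism; again $\deg(\eta|_F)=1$. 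The equivariance of $j$ and $\eta_\rho'$ in case (2) I would obtain by putting on $S\times_BB$ the $\C^*$-action that is trivial on the second factor and the given one on the first: then $\eta_\rho'$ is the equivariant projection and $j=(\eta,\rho)$ is equivariant because $\eta$ is and $\rho$ is invariant.

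Case (3) is where I expect the genuine difficulty. Here $\rho$ is untwisted, so $C\cong\Ast$ over $K=\C(B)$ and $\eta^{*}u=cu^{n}$ for the coordinate $u$ on $C$, some $c\in K^{*}$ and some $n\in\Z$ with $|n|=\deg(\eta|_F)$; the goal is $|n|=1$. Take a fibre $F_0=\rho^{-1}(b_0)$ with a reducible connected component, which by Lemma~\ref{lem:Gamma+Delta}(2) is $\mu_1A_1+\mu_2A_2$ with $A_1,A_2\cong\A^1$ meeting at one point and $\gcd(\mu_1,\mu_2)=1$. As $\eta$ is étale it is a local isomorphism near $A_1\cap A_2$, so $\overline{\eta(A_1)}$ and $\overline{\eta(A_2)}$ are two distinct components meeting at one point; by Lemma~\ref{lem:Gamma+Delta}(2) they form the $\AuA$ connected component of $\rho^{-1}(\eta_\rho(b_0))$, and by Lemma~\ref{lem:multiplicities} their multiplicities are $e\mu_1,e\mu_2$, where $e$ is the ramification index of $\eta_\rho$ at $b_0$; coprimality then forces $e=1$, so these multiplicities are again $\mu_1,\mu_2$. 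Choosing local equations $x,y$ of $A_1,A_2$ with $\rho$ given by $x^{\mu_1}y^{\mu_2}$ up to a unit, one has $\mathcal O(C)=K[x^{\pm1},y^{\pm1}]/(x^{\mu_1}y^{\mu_2}-t)$ with $t$ a uniformiser at $b_0$, and since $u$ is a generator of the group of Laurent monomials modulo that relation, $\mu_2\ord_{A_1}(u)-\mu_1\ord_{A_2}(u)=\pm1$; the same computation at the image fibre gives $\mu_2\ord_{\eta(A_1)}(u)-\mu_1\ord_{\eta(A_2)}(u)=\pm1$. Now $\eta$ étale implies $\eta^{*}\overline{\eta(A_i)}$ reduced (Lemma~\ref{lem:etale-basics}(1)), hence $\ord_{A_i}(\eta^{*}u)=\ord_{\eta(A_i)}(u)$; with $\eta^{*}u=cu^{n}$ and $\ord_{A_i}(c)=\mu_i\ord_{b_0}(c)$ this reads $\ord_{\eta(A_i)}(u)=\mu_i\ord_{b_0}(c)+n\,\ord_{A_i}(u)$ for $i=1,2$. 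Substituting into the identity at the image fibre, the $\ord_{b_0}(c)$-terms cancel and one is left with $\pm1=n\big(\mu_2\ord_{A_1}(u)-\mu_1\ord_{A_2}(u)\big)=\pm n$, so $|n|=1$.

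The delicate points I anticipate are exactly those in case (3): verifying that $\overline{\eta(A_1)}$ and $\overline{\eta(A_2)}$ are precisely the two branches of the image $\AuA$ fibre (no collapsing, no ``crossing'', nothing else attached), and setting up the monomial normal form of a coordinate on $C$ near such a fibre. The rest — the reduction through Zariski's Main Theorem and cases (1)–(2) — should be routine, and the coprimality of the multiplicities supplied by Lemma~\ref{lem:Gamma+Delta}(2) is precisely what makes both $e=1$ and the final identity go through.
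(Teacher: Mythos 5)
Your proposal is correct, and its skeleton coincides with the paper's: finiteness of $\eta_\rho'$ via the curve case of the GJC, Zariski's Main Theorem reducing everything to $\deg j=1$, i.e.\ to $\eta$ having degree $1$ on a general fibre, and then one argument per case. Cases (1) and (2) are essentially the paper's arguments in different clothing: for (1) the paper combines Riemann--Hurwitz with Lemma \ref{lem:etale-basics}(5), where you use multiplicativity of $e_{top}$ under finite \'etale covers; for (2) both proofs extract ``$\lambda^k=\lambda$, hence $k=1$'' from equivariance over a trivializing open subset of $B$. Your remark that case (2) genuinely requires the $\C^*$-equivariance of $\eta$, not merely $\eta\in\Et(S,\rho)$, is accurate: the paper's proof uses it silently, your $(x,y)\mapsto(x,y^2)$ example on $\C^*\times\C^*$ shows it cannot be dropped, and in every application of case (2) in the paper the endomorphism is indeed equivariant (or case (3) applies instead).

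The genuine divergence is case (3). The paper obtains $e(\eta_\rho,p_1)=1$ from coprimality exactly as you do, but then invokes \cite[Lemma 2.4.1(3)]{Miyanishi-Lectures_on_polynomials} to conclude that the finite part $\sigma$ of the decomposition $j=\sigma\circ\wt j$ is a cyclic Galois cover leaving $\wt j(F_1)$ invariant, and kills the Galois group by noting that it must act freely (since $\wt j(F_1)\to F_2$ is \'etale) yet fix the node of $\wt j(F_1)$. You instead compute directly with the fibre coordinate $u$ of the untwisted fibration: writing $\eta^*u=cu^n$ and using that $(\mu_1,\mu_2)$ and $(\ord_{A_1}u,\ord_{A_2}u)$ form a unimodular pair, the identity $\pm1=\pm n$ falls out after the $\ord_{b_0}(c)$-terms cancel. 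This is correct and has the merit of being self-contained, replacing the external citation by a valuation computation that uses the coprimality of the multiplicities twice (once for $e=1$, once for unimodularity). The one step you should write out with care is the unimodularity itself: the displayed ring $K[x^{\pm1},y^{\pm1}]/(x^{\mu_1}y^{\mu_2}-t)$ is only a heuristic for the formal local picture at the node, and the clean justification is that $u$ restricts to a coordinate on the nearby fibres $\cong\Ast$, so its winding number $\mu_2\ord_{A_1}(u)-\mu_1\ord_{A_2}(u)$ around the vanishing cycle of the node is $\pm1$; likewise $\ord_{A_i}(\eta^*u)=\ord_{\eta(A_i)}(u)$ is precisely the reducedness of $\eta^*\overline{\eta(A_i)}$ from Lemma \ref{lem:etale-basics}(1), as you indicate.
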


\begin{proof}
By Lemma \ref{prop:GJC(1)_holds} $\eta_\rho$ is finite, so since the normalization morphism is finite, $\eta_\rho'$ is finite. Since $\eta$ is quasi-finite, so is $j$. By the Zariski Main Theorem (\cite[Theorem 8.12.6]{EGA-IV-3}) we have a decomposition $j=\sigma\circ \wt j$, where $\wt j\:S\to S^\dagger$ is an open embedding and $\sigma\:S^\dagger\to S'$ is finite. Note that $j$ is an open embedding if and only if $\sigma$ is an isomorphism. Since $S'$ is normal, the latter holds if and only if $\deg \sigma=1$.

(1) Let $F_{gen}$ denote a general fiber. By the Riemann-Hurwitz formula $(\deg \sigma-1)e_{top}(F_{gen})\geq 0$, so we may assume $e_{top}(F_{gen})\geq 0$. Since $F_{gen}\not\cong \Ast$ we have $F_{gen}\cong \A^1$. Then $\sigma$ is an isomorphism by Lemma \ref{lem:etale-basics}(5). We may further assume that $\rho$ is an untwisted $\Ast$-fibration.

(2) By the universal property of the normalization the $\C^*$-action on the fiber product of $\eta_\rho$ and $\rho$ lifts to a $\C^*$-action on $S'$, for which $\rho'$ is the quotient morphism. Since $\rho$ is a trivial principal homogeneous $\C^*$-bundle over some non-empty Zariski open subset $B_0\subseteq B$, taking $B'=B_0\cap \eta_\rho^{-1}(B_0)$ the restriction of $\eta$ to $\rho^{-1}(B')\cong B'\times \Ast$ can be written as $(b,t)\mapsto (\eta_\rho(b),f(b)t^k)$ for some $f\in \C(B')^*$ and $k=\pm \deg j$. Due to the $\C^*$-equivariance of $\eta$, for every $\lambda\in\C^*$ we get $f(b)(\lambda t)^k=\lambda f(b) t^k$, hence $\lambda^k=\lambda$. Thus $k=1$ and hence $j$ is an open immersion. Then the restriction of $j$ to $\rho^{-1}(B')$ is $(b,t)\mapsto (b,f(b)t)$, so $j$, and hence $\eta_\rho'$, is $\C^*$-equivariant.

(3) Let $F_1$ be a reducible connected component of some fiber $\rho^*(p_1)$. By Lemma \ref{lem:Gamma+Delta}(2) $F_1\cong \mu_1\A^1\cup_{\{0\}}\mu_2\A^1$, where $\mu_1,\mu_2$ are coprime positive integers and, since $\eta$ is étale, $F_2=\eta(F_1)\cong \mu_1'\A^1\cup_{\{0\}}\mu_2'\A^1$. By Lemma \ref{lem:multiplicities} $\mu_i'=e(\eta_\rho,p_1)\mu_i$ for $i=1,2$. By Lemma \ref{lem:Gamma+Delta}(2) $e(\eta_\rho,p_1)=1$. Then by \cite[Lemma 2.4.1(3)]{Miyanishi-Lectures_on_polynomials} $\sigma$ is a cyclic Galois cover and $\wt j(F_1)$ is invariant with respect to this action. The action on $\wt j(F_1)$ is free, as the induced morphism $\wt j(F_1)\to F_2$ is étale. But the intersection point of the two components of $\wt j(F_1)$ is necessarily a fixed point, hence the Galois group is trivial, from which it follows that $\sigma$ is an isomorphism.
\end{proof}

\section{Proof of Theorem \ref{thm:(G,S)=(Cst,S(kra))}. Reduction to $\C^*$-actions.}

In this section we prove Theorem \ref{thm:(G,S)=(Cst,S(kra))} in case $G=\C^+$, that is, we prove the $\C^+$-equivariant Jacobian Conjecture for $\Q$-homology planes of negative Kodaira dimension.

\begin{notation}
If $G$ is an algebraic group and $X$ is a $G$-variety then we denote by $\Et_G(S)$ the monoid of $G$-equivariant étale endomorphisms of $S$.    
\end{notation}

\subsection{Non-proper étale endomorphisms respecting an $\A^1$-fibration.} \label{ssec:Cplus-equivariance}

If $\C^+$ acts effectively on a normal affine surface $S$ then the algebraic quotient morphism $\rho\:S\to B$ is an $\A^1$-fibration, hence the surface has negative Kodaira dimension, and any $\C^+$-equivariant étale endomorphism respects this fibration. We are therefore led to the study of étale endomorphisms of $\Q$-homology planes respecting $\A^1$-fibrations. 

Let $\rho\:S\to B$ be an $\A^1$-fibration of a $\Q$-homology plane. By Lemma \ref{lem:Gamma+Delta} $B\cong \A^1$. By Proposition \ref{prop:GJC_known} and Corollary \ref{cor:Et(S,p)-for_A1-fib_pseudo-planes} if $\pi_1(S)\ncong \Z_2*\Z_2$ then $\Et(S,\rho)=\Aut(S,\rho)$, hence the $\C^+$-equivariant Generalized Jacobian Conjecture holds for $S$. The case $\pi_1(S)\cong \Z_2*\Z_2$ requires further attention. As before, let $T_n$ and $U_{n}$ denote the Chebyshev polynomials of degree $n$ of the first and second kind respectively.

\begin{prop}\label{prop:descent_for_A1-fibrations}
Let $\rho\:S\to \A^1$ be an $\A^1$-fibration of a $\Q$-homology plane. Assume $\eta\in\Et(S,\rho)$ has degree $n>1$. Then $\pi_1(S)\cong \Z_2*\Z_2$ and there exist
\begin{enumerate}[(a)]
\item a birational morphism $\sigma\:S\to \A^2=\Spec (\C[x,y])$ restricting to an isomorphism off the fibers $\rho^*(\pm 1)$ such that $\rho={\pr_1}\circ \sigma$ and
\item polynomials $a,b\in \C[x]$ such that $a(1)=\pm 1/n$ and $a$ vanishes only on (some) zeros of $U_{n-1}$
\end{enumerate}
such that for the endomorphism $\eta_0\in \End(\A^2,\pr_1)$ given by 
\begin{equation}\label{eq:eta_0}
\eta_0(x,y)=(T_n(x),U^2_{n-1}(x)a(x)y+(x^2-1)U_{n-1}(x)b(x)),
\end{equation} 
the following diagram commutes
\[\xymatrix{S\ar[d]_{\eta}\ar[r]^{\sigma} & \A^2\ar[d]^{\eta_0}\ar[d]^{\eta_0}\ar[r]^{\pr_1} & \A^1\ar[d]^{T_n}\\S\ar[r]_{\sigma} & \A^2\ar[r]_{\pr_1} & \A^1}\]
for some choice of coordinates on $\A^1$.
\end{prop}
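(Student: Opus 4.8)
The plan is to run a Chebyshev-type classification argument on the base, then lift the structure to $S$. First I would use Corollary \ref{cor:Et(S,p)-for_A1-fib_pseudo-planes} (contrapositively) and Proposition \ref{prop:GJC_known}: since $\eta$ has degree $n>1$, the Generalized Jacobian Conjecture must fail for $S$, so $\pi_1(S)\cong\Z_2*\Z_2$; by Lemma \ref{lem:Qhp_structure} this forces $\rho$ to have exactly two degenerate fibers, both double, say over $p_1,p_2\in B\cong\A^1$, each of reduced form a disjoint union of affine lines. By Lemma \ref{lem:when_etaB_has_deg=1} the induced $\eta_\rho\:\A^1\to\A^1$ is finite étale — wait, that lemma needs at most one non-reduced fiber, which fails here; instead I use Lemma \ref{prop:GJC(1)_holds} to get that $\eta_\rho$ is finite of degree $n$ (using Lemma \ref{lem:etale-basics}(5), so $\deg\eta=\deg\eta_\rho=n$), and then Lemma \ref{lem:etale-basics}(1) together with Lemma \ref{lem:multiplicities} to pin down the branching behaviour. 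The key point: the double fibers can only map to double fibers, so $\{p_1,p_2\}$ is $\eta_\rho$-invariant as a set, and by Lemma \ref{lem:multiplicities} combined with Lemma \ref{lem:etale-basics}(1) the ramification index of $\eta_\rho$ over each $p_i$ must be odd (otherwise $\eta^*$ of the double fiber would be non-reduced). Choosing the coordinate on the base so that $p_1=1$, $p_2=-1$, and analyzing which self-covers of $\P^1$ can have all critical points lying over $\{\pm1\}$ with the right parity constraint, Lemma \ref{lem:Chebyshev_characterization} (applied after normalizing via an affine change as in its proof) identifies $\eta_\rho$ with $T_n$ up to the allowed symmetries.

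Next I would construct the birational morphism $\sigma\:S\to\A^2$. Since $\rho$ is an $\A^1$-fibration of a $\Q$-homology plane with $\pi_1\cong\Z_2*\Z_2$ and the two degenerate fibers sit over $\pm1$, I claim $S$ is obtained from $\A^1\times\A^1$ by a sequence of fibered birational modifications supported over $\pm1$; concretely, contracting appropriately inside a minimal completion and using the standard structure theory of $\A^1$-fibered $\Q$-homology planes (Lemma \ref{lem:Gamma+Delta}(1) and the completion description in \S\ref{ssec:Cst_fibrations}), one exhibits $\sigma$ as a birational morphism restricting to an isomorphism away from $\rho^*(\pm1)$ and satisfying $\rho=\pr_1\circ\sigma$. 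This reduces the problem to describing endomorphisms of $(\A^2,\pr_1)$ over $T_n$ that restrict to étale maps of $S$.

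Finally I would write down $\eta$ in the coordinates $(x,y)$ provided by $\sigma$. Respecting $\pr_1$ over $T_n$ means $\eta_0(x,y)=(T_n(x),g(x,y))$ with $g$ linear in $y$, say $g(x,y)=A(x)y+B(x)$ for polynomials $A,B\in\C[x]$. The étale condition for $\eta$ — equivalently, since $\sigma$ is an isomorphism off $\rho^*(\pm1)$, the étale condition for $\eta_0$ away from $x=\pm1$ together with Lemma \ref{lem:multiplicities} on the two exceptional fibers — forces the Jacobian $T_n'(x)A(x)=n\,U_{n-1}(x)A(x)$ to vanish only where forced by the fiber contractions, which are exactly the zeros of $U_{n-1}$; unwinding this, $A(x)=U_{n-1}^2(x)a(x)$ and the $y$-free part must be divisible by $(x^2-1)U_{n-1}(x)$ — here I would use the factorization $T_n^2-1=(x^2-1)U_{n-1}^2$ from \eqref{eq:Tn-Un_square_relation} to see that $(x^2-1)U_{n-1}(x)$ is precisely the divisor along which $B$ is constrained — so $B(x)=(x^2-1)U_{n-1}(x)b(x)$, giving \eqref{eq:eta_0}. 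The normalization $a(1)=\pm1/n$ comes from imposing that $\eta$ be étale (Jacobian a nonzero constant, or rather nowhere-vanishing) at the points of $\rho^*(1)$, translated through $\sigma$; and $a$ vanishes only on zeros of $U_{n-1}$ by the same local analysis at the contracted curves. I expect the main obstacle to be the construction of $\sigma$ and the careful bookkeeping of multiplicities via Lemma \ref{lem:multiplicities} at the two degenerate fibers — matching the combinatorics of the completion's fiber trees to the exponents of $U_{n-1}$ and of $(x^2-1)$ in $A$ and $B$ — rather than the base classification, which is essentially Lemma \ref{lem:Chebyshev_characterization}.
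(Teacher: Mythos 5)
Your overall strategy matches the paper's: identify $\eta_\rho$ with $T_n$ via Lemma \ref{lem:Chebyshev_characterization}, build $\sigma$ by contracting inside a minimal completion, and then write $\eta_0(x,y)=(T_n(x),A(x)y+B(x))$ and extract divisibility conditions on $A$ and $B$. The first step is essentially right, though your parity phrasing is looser than needed: since every fiber component of $\rho$ has multiplicity $1$ or $2$, Lemma \ref{lem:multiplicities} gives directly that the ramification index of $\eta_\rho$ at $\pm 1$ is exactly $1$ and at every other preimage of $\{\pm 1\}$ is exactly $2$, which is precisely the hypothesis set of Lemma \ref{lem:Chebyshev_characterization} (non-degenerate critical points, critical values in $\{\pm1\}$, $\pm1$ not critical). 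The construction of $\sigma$ is only asserted, but the route you indicate (a chain of contractions of $(-1)$-curves in the fibers over $\pm1$ of a minimal completion, down to a $\P^1$-bundle) is the one the paper carries out; note that to push $\eta$ down through these contractions one also needs that $\eta$ has no base points on the completed fibers over $\pm1$, which the paper gets from the factorization through the normalized fiber product (Lemma \ref{lem:factorization_sigma=id}).

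The genuine gap is in the last step. The Jacobian of $\eta_0$ is $nU_{n-1}(x)A(x)$, and requiring it to be nonzero away from $\{x=\pm1\}\cup\{U_{n-1}=0\}$ only tells you where $A$ may vanish; combined with the fact that $\eta_0$ must contract the fiber over each root $x_0$ of $U_{n-1}$ to the point $(\pm1,0)$ (a base point of $\sigma$), you get $U_{n-1}\mid A$ and $(x^2-1)U_{n-1}\mid B$ — but \emph{not} the exponent $U_{n-1}^2\mid A$, and not $a(1)=\pm1/n$. These two facts are not consequences of étaleness on the open locus where $\sigma$ is an isomorphism; they encode the condition that $\eta_0$ sends the \emph{infinitely near} base points of $\sigma$ to the infinitely near base points, so that $\eta_0$ actually lifts to a morphism of $S$ landing on the correct fibers. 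Concretely, on the second blow-up $V_2$ one must check that $\eta$ sends the fiber over $x_0$ to the single point $p_{T_n(x_0)}$ that is the center of the next blow-up; in the coordinate $v_2=(x^2-1)/y^2$ this forces the restriction of $\eta_2^*v_2$ to that fiber to be constant, which is what kills the extra factor of $U_{n-1}$ in $A$ (i.e.\ $A=U_{n-1}^2a$), and the requirement that the center $p_1$ over $x=1$ be fixed yields $(na(1))^{2}=1$, i.e.\ $a(1)=\pm 1/n$. Without this local computation at the infinitely near points your argument produces a strictly weaker normal form, so the step "unwinding this, $A(x)=U_{n-1}^2(x)a(x)$" does not follow from what precedes it.
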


\begin{proof}By Proposition \ref{prop:GJC_known} $\pi_1(S)$ is isomorphic to $\Z_2*\Z_2$ and $\Et(S)=\Et(S,\rho)$. By Lemma \ref{lem:Qhp_structure}, $\rho$ has exactly two degenerate fibers, say over $x=\pm 1$, both isomorphic to $2\A^1$. Since the general fiber is $\A^1$, we have $\deg \eta_\rho=\deg \eta\geq 2$. By Lemma \ref{lem:multiplicities}, $\eta_\rho$ has non-degenerate critical points (the ramification indices equal $2$) and $\eta_\rho\{-1,1\}\subseteq \{-1,1\}$, so renaming $\pm 1$ if necessary we may assume $\eta_\rho(1)=1$ and $\eta_\rho(-1)=\pm 1$. So $\eta_\rho=T_n$ by Lemma \ref{lem:Chebyshev_characterization} with $n=\deg \eta$.

Let $\bar \rho\:\ov S\to \P^1$ be a minimal smooth completion of $\rho$ and let $D_h$ be the section of $\bar \rho$ contained in $D=\ov S\setminus S$. Denote by $\bar F_{\pm 1}$ the fiber of $\bar \rho$ over $\pm 1$. Since $\bar F_{\pm 1}\cap S$ has multiplicity two, $\bar F_{\pm 1}$ has dual graph 
$$\xymatrix@C=1.5pc@R=1.5pc{{-1}\ar@{-}[r] & {-2}\ar@{-}[r] & {\ldots}\ar@{-}[r] & {(-2)}\ar@{-}[r]\ar@{-}[d] & {-2}\ar@{.}[r] & {\bullet} \\ {} & {} & {} & {-2} & {} & { }}$$
where the black dot represents $D_h$. Let $k+1$ be the maximum of the number of components of $\bar F_{\pm 1}$. Contract successively $(-1)$-curves in the fiber with biggest number of components until the induced fibers over $\pm 1$ have the same number of components. Then continue with simultaneous contractions of $(-1)$-curves - each time one $(-1)$-curve over each of $\pm 1$. This gives a sequence of contractions 
\begin{equation*}
\ov S=\ov S_k\xra{\sigma_{k-1}}\ldots\xra{\sigma_1}\ov S_1\xra{\sigma_0}\ov S_0.
\end{equation*} In particular, $\ov S_0$ is a $\P^1$-bundle over $\P^1$ and $\ov S_1$ and $\ov S_2$ have Picard ranks $4$ and $6$, respectively. By choosing $\Exc \sigma_0$ correctly we may, and will, assume that the components of $\bar F_{\pm 1}$ meeting $D_h$ are not contracted in this process. We order the components of both $\bar F_{\pm 1}$ by increasing multiplicity in the fiber (we assume the ones meeting $D_h$ are the first ones). Clearly, components which are older in this order are contracted first in the sequence above.

Let $S\xra{j}S'\xra{\eta_\rho'}S$ be the factorization \eqref{diagram:factorization} and let $\bar \rho'\:\bar S'\to\P^1$ be a minimal normal completion of the induced $\A^1$-fibration of $S'$, smooth along $\bar S'\setminus S'$. Since $\eta_\rho$ is étale at $\pm 1$, $\eta_\rho'$ is a local analytic isomorphism over $\pm 1$, so $\eta_\rho'$ has no base points on the fibers $\bar F'_{\pm 1}=(\bar \rho')^{-1}(\pm 1)$ and maps them isomorphically onto their images $\bar F_{\pm 1}\subseteq \ov S$. But $j$ is an open embedding, so it has no base point either. It follows that $\eta$ has no base points on $\bar F_{\pm 1}$.

For $i\geq 2$ let $D_i$ be the direct image of $D$ on $\ov S_i$ with the last components of the direct images of $\bar F_{\pm 1}$ (in the order defined above) deleted and let $D_1$, $D_0$ be the direct images of $D_h$ on $\ov S_1$ and $\ov S_0$, respectively. For $i\geq 0$ put $V_i=\ov S_i\setminus D_i$. All divisors $D_i$ for $i\neq 2$ are connected. The divisor $D_2\cap \bar F_{\pm 1}$ has $3$ connected (irreducible) components. All $V_i$ for $i\geq 0$ are quasi-affine surfaces with an induced $\A^1$-fibration over $\A^1$ and with degenerate fibers over $\pm 1$ only. For $i\geq 3$ the fibers are isomorphic to $2\A^1$. For $i=2,1,0$ they are isomorphic to $2\A^1_*$ and $\AuA$ and $\A^1$, respectively. The restriction to $V_0$ of the $\P^1$-bundle extending $\rho$ is trivial, hence it can be written as the projection $\rho_0\:\Spec (\C[x,y])\to \Spec (\C[x])$. In particular, $V_0\cong \A^2$. 

Let $\eta_i$ denote $\eta$ treated as a rational endomorphism of $V_{i}$. It has no base points off the fibers over $x=\pm 1$. As we have seen, $\eta$ is well defined on $\bar F_{\pm 1}$. Since the divisor $D\cap \bar F_{\pm1}$ contains no $(-1)$-curves, $\eta$ maps $\bar F_{\pm 1}$ isomorphically to $\bar F_{(\pm 1)^n}$. Because the extended dual graph on the figure above has no symmetry, $\eta$ respects the order of components we defined. It follows by a descending induction on $i$ that $\eta_i$ maps the fibers over $x=\pm 1$ isomorphically to their images. Therefore, $\eta_i$ is an endomorphism of $V_i$. We obtain a commutative diagram
\begin{equation}\label{eq:S_A^1-fibered_descent}
\xymatrix{S\ar[d]_{\eta}\ar[r]^{\sigma_{k-1}} & V_{k-1}\ar[d]^{\eta_{k-1}}\ar[r] &\ldots\ar[r]^{\sigma_1} & V_1\ar[d]^{\eta_1}\ar[r]^{\sigma_0}  & V_0\ar[d]^{\eta_0}\ar[d]^{\eta_0}\ar[r]^{\rho_0} & \A^1\ar[d]^{\eta_\rho=T_n}\\
S\ar[r]^{\sigma_{k-1}} &V_{k-1}\ar[r] & \ldots\ar[r]^{\sigma_1} & V_1\ar[r]^{\sigma_0}  & V_0\ar[r]^{\rho_0} & \A^1}
\end{equation}
from which $\eta_0(x,y)=(T_n(x),A(x)y+B(x))$ for some $A\in \C[x]\setminus 0$ and $B\in \C[x]$. 

We now review the conditions imposed on $A$ and $B$ by the fact that $\eta_0$ lifts to $\eta$. Let $\sigma=\sigma_0\circ\cdots\circ\sigma_{k-1}$. Up to changing the coordinate $y$ by $y+c_1x+c_0$ for some $c_0,c_1\in \C$, we may assume that $\sigma$ contracts the fibers $\rho^{-1}(\pm1)$  onto the points $(\pm1,0)\in V_0$. The commutativity of the above diagram implies that $\eta_0$ maps the base points of $\sigma$, including  infinitely near ones, to the base points of $\sigma$. In particular, $\eta_0(\pm1,0)=((\pm 1)^n,0)$, hence $B(1)=B(-1)=0$, so $B=\left(x^2-1\right)B_1$ for some $B_1\in \C[x]$. Also, since $\eta$ is quasi-finite, we see that $\eta_0$ contracts a fiber over $x_0$ if and only if $x_0\neq \pm 1$ and $T_n(x_0)=\pm 1$, equivalently if and only if $U_{n-1}(x_0)=0$ (see \eqref{eq:Tn-Un_square_relation}), and in each case the image is $(\pm 1,0)$. Since $U_{n-1}$ is separable, $A=U_{n-1}A_1$ and $B_1=U_{n-1}b$ for some $A_1,b\in \C[x]$. 

The morphism $\sigma_0\circ \sigma_1\:V_2\to V_0$ is a restriction of the blowup of the ideal $(x^2-1,y^2)$. Let $E_{1,x}\subseteq V_1$ and $E_{2,x}\subseteq V_2$ be the exceptional divisors of $\sigma_0$ and $\sigma_1$ over $x=\pm 1$. The functions $y$ and $v_2=\frac{x^2-1}{y^2}$ are regular on an open subset of $V_2$ containing $E_{2,\pm 1}$ and in the coordinates $(v_2,y)$ the latter is described by $y=0$ (and $x=\pm 1$). We have $\eta_2^*(y)=U_{n-1}\cdot (A_1y+(x^2-1)b)$ and, by \eqref{eq:Tn-Un_square_relation}, $\eta_2^*(x^2-1)=T_n^2-1=(x^2-1)U^2_{n-1}$, so
\begin{equation}\label{eq:u}
\eta_2^*v_2=\frac{x^2-1}{(A_1y+(x^2-1)b)^2}\ .
\end{equation}

Denote by $F_x$ the reduced form of the fiber of $\rho_2=\rho_0\circ \sigma_0\circ \sigma_1\:V_2\to \A^1$ over $x$ and by $p_{\pm 1}\in E_{2,\pm 1}=F_{\pm 1}\cong \Ast$ the center of $\sigma_3$. Let $\lambda_{\pm 1}\in \C^*$ be the $v_2$-coordinate of $p_{\pm 1}$. Let $x_0$ be a root of $U_{n-1}$. The open subset of $F_{x_0}$ visible in the $(y,v_2)$-coordinates is $\{(y,v_2):v_2=(x_0^2-1)/y^2\}\cong \Ast$. By the commutativity of the above diagram $\eta$ maps the fiber $F_{x_0}$ to $p_{T_n(x_0)}$, so the restriction of $\eta_2^*v_2$ to $F_{x_0}$ is constant, equal to $\lambda_{T_n(x_0)}$. We get $A_1(x_0)=0$ and $(x_0^2-1)b^2(x_0)=\lambda_{T_n(x_0)}^{-1}$. In particular, $A_1=U_{n-1}a$ for some $a\in \C[x]$. We obtain
\begin{equation}\label{eq:v_ver2}
\eta_2^*v_2=\frac{x^2-1}{(U_{n-1}ay+(x^2-1)b)^2}=\frac{v_2}{(U_{n-1}a+yv_2b)^2}.
\end{equation}
Putting $\bar v_2=v_2|_{E_1}$, we get $\eta_2^*\bar v_2/\bar v_2=(na(1))^{-2}$. Since $\eta_2$ fixes $p_1$, we obtain $a(1)=\pm 1/n$.
\end{proof}

\subsection{$\C^+$-equivariant GJC holds.}
 
\begin{prop}[$\C^+$-equivariant GJC]\label{lem:Cplus-equivariance} The $\C^+$-equivariant Generalized Jacobian Conjecture holds for $\Q$-homology planes with an effective $\C^+$action.
\end{prop}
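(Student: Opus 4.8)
Let $S$ be a $\Q$-homology plane carrying an effective $\C^+$-action and let $\rho\:S\to B$ be the algebraic quotient morphism. Then $\rho$ is an $\A^1$-fibration, so $\kappa(S)=-\8$ and $B\cong\A^1$ by Lemma~\ref{lem:Qhp_structure}. If $\eta\in\Et_{\C^+}(S)$ then $\rho\circ\eta$ is $\C^+$-invariant, hence factors through $\rho$; thus $\eta\in\Et(S,\rho)$. By Proposition~\ref{prop:GJC_known} and Corollary~\ref{cor:Et(S,p)-for_A1-fib_pseudo-planes}, if $\pi_1(S)\ncong\Z_2*\Z_2$ then $\Et(S,\rho)=\Aut(S,\rho)$ and we are done, so the plan is to treat the remaining case $\pi_1(S)\cong\Z_2*\Z_2$ by contradiction: assume $\eta\in\Et_{\C^+}(S)$ is not an automorphism, so $n:=\deg\eta\geq2$ by Lemma~\ref{lem:etale-basics}(2). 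Apply Proposition~\ref{prop:descent_for_A1-fibrations} to $\eta$: it produces a birational morphism $\sigma\:S\to\A^2=\Spec(\C[x,y])$ with $\rho=\pr_1\circ\sigma$, an isomorphism over $\A^1\setminus\{\pm1\}$, and polynomials $a,b$ with $a(1)=\pm1/n$, such that $\eta_0\:=\sigma\circ\eta\circ\sigma^{-1}$ (a rational self-map of $\A^2$) equals $\eta_0(x,y)=(T_n(x),A(x)y+B(x))$ with $A=U_{n-1}^2a$ and $B=(x^2-1)U_{n-1}b$. The point to keep track of is that $A(1)=U_{n-1}(1)^2a(1)=n^2\cdot(\pm1/n)=\pm n\neq0$.

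Next I would transport the $\C^+$-action through $\sigma$. Since the action preserves the fibres of $\rho$, on the open subset isomorphic to $(\A^1\setminus\{\pm1\})\times\A^1$ it must have the form $\phi_t(x,y)=(x,y+tg(x))$ for some $g\in\C(x)$ regular on $\A^1\setminus\{\pm1\}$, and $g\neq0$ because the action is effective. The $\C^+$-equivariance of $\eta$ becomes the identity of rational maps $\eta_0\circ\phi_t=\phi_t\circ\eta_0$; comparing the $y$-coordinates the term $B$ cancels and one is left with the functional equation
\[
g(T_n(x))=A(x)\,g(x).
\]

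It remains to derive a contradiction from this equation together with $A(1)=\pm n$. First, writing $g=P/Q$ in lowest terms, from $P(T_n)Q=APQ(T_n)$ and $\gcd(P(T_n),Q(T_n))=1$ one gets $Q(T_n(x))\mid Q(x)$; since $\deg Q(T_n(x))=n\deg Q$ with $n\geq2$, this forces $\deg Q=0$, so $g\in\C[x]\setminus\{0\}$. Let $s=\ord_{x=1}(g)\geq0$. Because $T_n(1)=1$ and $T_n'(1)=nU_{n-1}(1)=n^2\neq0$, we have $T_n(x)-1=n^2(x-1)+O((x-1)^2)$, so $g(T_n(x))$ vanishes at $x=1$ to order $s$ with leading coefficient $n^{2s}$ times that of $g$; on the other hand, as $A(1)=\pm n\neq0$, the right-hand side $A(x)g(x)$ vanishes at $x=1$ to order $s$ with leading coefficient $\pm n$ times that of $g$. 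Hence $n^{2s}=\pm n$, i.e.\ $n^{2s-1}=\pm1$, which is impossible for an integer $n\geq2$ and $s\in\Z_{\geq0}$ since the exponent $2s-1$ is odd and nonzero. This contradiction shows that every $\C^+$-equivariant étale endomorphism of $S$ is an automorphism.

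I expect the main obstacle to be the set-up rather than the final computation: one must justify carefully that the transported $\C^+$-action really is of the shape $(x,y)\mapsto(x,y+tg(x))$, that the equivariance identity is legitimate as an identity of rational maps on $\A^2$, and — most importantly — that the normalization $a(1)=\pm1/n$ built into Proposition~\ref{prop:descent_for_A1-fibrations} is exactly what makes the multiplier $A$ nonzero at $x=1$. Once the functional equation $g(T_n(x))=A(x)g(x)$ is in hand, the polynomiality of $g$ and the single leading-term comparison at $x=1$ close the argument at once.
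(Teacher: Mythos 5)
Your proof is correct, and its overall strategy coincides with the paper's: reduce via Proposition \ref{prop:GJC_known} and Corollary \ref{cor:Et(S,p)-for_A1-fib_pseudo-planes} to the case $\pi_1(S)\cong\Z_2*\Z_2$, invoke Proposition \ref{prop:descent_for_A1-fibrations} to get $\eta_0(x,y)=(T_n(x),A(x)y+B(x))$ with the crucial normalization $A(1)=\pm n$, derive the functional equation $g(T_n(x))=A(x)g(x)$ from equivariance, and contradict $A(1)=\pm n$ using $T_n'(1)=n^2$. Where you genuinely diverge is in how the $\C^+$-action is brought down to $\A^2$. The paper shows that the minimal completion $\bar S$ is $\C^+$-equivariant, that the contractions $\sigma_i$ are equivariant (so the action descends to a \emph{regular} action on $V_0=\A^2$), and then proves separately that the action is free, in order to conclude that its coefficient function is $Q(x)=c(x-1)^{m^-}(x+1)^{m^+}$; the contradiction then comes from $A(1)=n^{2m^-}=\pm n$. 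You instead transport the action only rationally, over the locus $x\neq\pm1$ where $\sigma$ is an isomorphism, obtaining $g\in\C(x)$ a priori with poles at $\pm1$, and then extract everything you need from the functional equation itself: the lowest-terms argument $Q(T_n)\mid Q$ forces $g$ to be a polynomial, and the order-of-vanishing/leading-coefficient comparison at $x=1$ gives $n^{2s}=\pm n$ for $s=\ord_{x=1}(g)$, which is the same numerical contradiction. This buys you a real simplification: you need neither the equivariance of the completion and of the contractions nor the freeness of the action, and the explicit factored form of $Q$ becomes irrelevant. The only points to spell out in a final write-up are the ones you already flagged: that a fibered $\C^+$-action on $(\A^1\setminus\{\pm1\})\times\A^1$ must be of the form $(x,y)\mapsto(x,y+tg(x))$ (no nontrivial characters of $\C^+$, plus algebraicity in $x$), and that $g\neq0$ by effectiveness.
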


\begin{proof} Let $S$ be a $\Q$-homology plane with an effective $\C^+$-action $\Lambda_t\:S\to S$, $t\in \C^+$. Suppose $\eta\in \Et_{\C^+}(S)$ is non-proper and put $n=\deg \eta >1$. The quotient morphism is an $\A^1$-fibration $\rho\:S\to \Spec (\C[x])$ respected by $\eta$. By Proposition \ref{prop:descent_for_A1-fibrations} $S$ is not a pseudo-plane, $\rho$ has exactly two degenerate fibers and there is a birational morphism $\sigma\:S\to \A^2$ such that $\rho=\pr_1\circ \sigma$ and an endomorphism $\eta_0\in \End(\A^2,\pr_1)$ given by 
\begin{equation*}
\eta_0(x,y)=(T_n(x),A(x)y+B(x)),
\end{equation*} where $A=U^2_{n-1}a$, $B=(x^2-1)U_{n-1}b$, and where $a,b\in \C[x]$ and $A(1)=\pm n$ such that $\eta_0\circ\sigma=\sigma\circ\eta$.

We now derive a contradiction from the assumption that $\eta$ is $\C^+$-equivariant.  First of all note that, in principle, a minimal $\C^+$-equivariant completion of $\rho$ can have a higher Picard rank than the minimal completion $\bar S$ used in the proof of Proposition \ref{prop:descent_for_A1-fibrations}. However, since $\Lambda_{t=0}=\id$, by continuity the $\C^+$-action on $\bar S$ extending $\Lambda$ maps fiber components to themselves, which by the minimality means that the boundary of such a completion contains no $(-1)$-curves in the fibers, hence it is also minimal among all smooth completions of $\rho$. Thus $\bar S$ is a $\C^+$-equivariant smooth completion of $S$. Then, since $1$-dimensional orbits of a $\C^+$-action are necessarily isomorphic to $\A^1$, we infer that the contractions $\sigma_i$ of $(-1)$-curves contained in fibers are $\C^+$-equivariant, so the diagram \eqref{eq:S_A^1-fibered_descent} is $\C^+$-equivariant. 

Suppose the action $\Lambda$ is not free over some $b\in \A^1$. Then $F_b:=\rho^{-1}(b)$ is contained in the fixed point set of $\Lambda$. Since $\eta$ is $\Lambda$-equivariant, the fixed point set contains $\bigcup_{k\geq 0}(\eta^{\circ k})^{-1}(F_b)$, where $\eta^{\circ k}$ denotes the $k$-th iteration of $\eta$. The latter cannot be dense in  $S$, because $\Lambda$ is effective, hence its image by $\rho$ is finite. So there exists $N\geq 1$ such that $Z=\bigcup_{k=0}^{N} (T_n^{\circ k})^{-1}(b)$ satisfies $T_n^{-1}(Z)\subseteq Z$. Since $T_n$ is finite of degree $\geq 2$, so is its restriction to $\A^1\setminus Z$. This is possible only if $e_{top}(\A^1\setminus Z)=0$, so $Z=\{b\}$ and $\deg T_n=n$ is the ramification index of $T_n$ at $b$. The latter gives $T_n^*\{b\}=n\cdot\{b\}$, and hence $n=1$ by Lemma \ref{lem:etale-basics}(1). Thus $\Lambda$ is a free $\C^+$-action.

Since $\sigma$ is an isomorphism over $x\neq \pm 1$, the $\C^+$-action $\Lambda^0$ induced by $\Lambda$ on $V_0$ has the form $\Lambda^0_t(x,y)=(x,y+t\cdot Q(x))$ for some polynomial $Q\in \C[x]$ of the form $Q(x)=c(x-1)^{m^-}(x+1)^{m^+}$ for some integers $m^-,m^+\geq 0$ and some $c\in \C^*$. We have $\eta_0\circ \Lambda^0_t=(T_n(x),A(x)(y+tQ(x))+B(x))$ and $\Lambda^0_t\circ \eta_0=(T_n(x),A(x)y+B(x)+tQ(T_n(x)))$. Since $\eta_0$ is $\C^+$-equivariant, we obtain $A(x)Q(x)=Q(T_n(x))$, i.e.\

\begin{equation}\label{eq:Cplus-a(x)}
A(x)=\left(\frac{T_n(x)-1}{x-1}\right)^{m^-}\left(\frac{T_n(x)+1}{x+1}\right)^{m^+}.
\end{equation}
In particular, $A(1)=(\frac{dT_n}{dx}(1))^{m^-}=n^{2m^-}$. But $A(1)=\pm n$, hence $n=1$; a contradiction. 
\end{proof}

\smallskip
\subsection{Miyanishi's counterexample expanded.}\label{ssec:Miyanishi}

Let again $S$ be a $\Q$-homology plane with an $\A^1$-fibration $\rho\:S\to \A^1$. If $\eta$ is a non-proper étale endomorphism of $S$ respecting $\rho$ (hence a counterexample to the Generalized Jacobian Conjecture for $S$) then by Proposition \ref{prop:descent_for_A1-fibrations} $\pi_1(S)\cong \Z_2*\Z_2$. A first example of this type has been constructed by Miyanishi \cite[2.4.3(2), 2.3.11]{Miyanishi-Lectures_on_polynomials}; it has degree $2$. We now make a digression to show how our computations from the previous section can be applied to construct counterexamples of any degree. Note that by Proposition \ref{lem:Cplus-equivariance} $\eta$ is not $\C^+$-equivariant for any effective $\C^+$-action on $S$.

\begin{example}[Miyanishi's $\Q$-acyclic counterexample with $\pi_1=\Z_2*\Z_2$]\label{ex:Miy_Z2stZ2} Let $(C,p_{-1},p_1,p_{\8})$ be a plane conic with a triple of distinct points on it. Denote by $F_{\pm 1}$ the lines tangent to $C$ at $p_{\pm 1}$ and by $F_\8$ the line joining $p_\8$ and the common point, say $q$, of $F_1$ and $F_{-1}$. Blow up once over $q$, three times over each of $p_{\pm 1}$, each time on the proper transform of $C$. Denote the last exceptional curves over $\pm 1$ by $E_{\pm 1}$. Let $\bar S$ be the resulting projective surface and $D$ be the total reduced transform of $F_1+F_\8+F_{-1}$ with $E_1+E_{-1}$ subtracted. Denote by $\bar \rho\:\bar S\to \P^1$ the $\P^1$-fibration induced by the linear system of the proper transform of $F_\8$ and by $\rho$ its restriction to $S=\bar S\setminus D$. The two degenerate fibers of $\rho$ are isomorphic to $2\A^1$ and have dual graph
$$\xymatrix@C=1.5pc@R=1.5pc{{-1}\ar@{-}[r] & {(-2)}\ar@{-}[r]\ar@{-}[d] & {-2}\ar@{.}[r] & {\bullet} \\ {} & {-2} & {} & { }}$$
where the black dot represents the exceptional curve over the point $F_1\cap F_\8 \cap F_{-1}$. The discriminant of the intersection matrix of $D$ is non-zero, so since $\rho(\bar S)=\#D$, the components of $D$ freely generate $\Pic(\bar S)\otimes_{\Z}\Q$. It follows that $S$ is $\Q$-acyclic.  It is also $\A^1$-fibered, so its Kodaira dimension is negative. 
\end{example}

In \cite[2.4.3(2), 2.3.11]{Miyanishi-Lectures_on_polynomials} Miyanishi argues geometrically that $S$ has an étale endomorphism of degree $2$. Namely, let  $(S', \rho', \varphi')$ be the normalized pullback of $\rho\:S\to\A^1$ and $\varphi\:\A^1\to \A^1$ given by $\varphi(x)=2x^2-1$. We have $\varphi^{-1}\{-1,1\}=\{-1,0,1\}$ and $\varphi$ is étale at $\pm 1$, so the degenerate fibers of $\rho'\:S'\to \A^1$ are $(\rho')^*(\pm 1)\cong 2\A^1$ and $(\rho')^*(0)\cong \A^1\sqcup \A^1$. If we remove from $S'$ one of the components of the latter fiber then one shows that we get a surface isomorphic with $S$. Let us note that the proof of the latter uses the fact that $S$ has a $\Z_2$ action lifting the $\Z_2$-action $x\mapsto -x$. 

We now construct étale endomorphism of every degree on Miyanishi's surface. 

\begin{prop}[Miyanishi's counterexample expanded]
Let $\rho\:S\to \A^1$ be the $\A^1$-fibration of Miyanishi's $\Q$-acyclic surface of Example \ref{ex:Miy_Z2stZ2} and let $\sigma:S\rightarrow \A^2$ be the birational morphism constructed in Proposition \ref{prop:descent_for_A1-fibrations}. Then for every $n\geq 2$ and every polynomial $b\in \C[x]$ such that 
 \begin{equation}\label{eq:Miy_condition_on_b_ver2}
 b(\cos(\frac{k\pi}{n}))=\pm \sqrt{-1}/\sin(\frac{k\pi}{n})\text{\ \ for\ \ } k=1,\ldots,n-1.
 \end{equation}  
the endomorphism $\eta_0\:\A^2\to \A^2$ defined by 
\begin{equation}\label{eq:Miy_eta_0}
\eta_0(x,y)=(T_n(x),\frac{1}{n}U^2_{n-1}(x)y+(x^2-1)U_{n-1}(x)b(x)),
\end{equation}
lifts via $\sigma:S\rightarrow \A^2$ to an étale endomorphism $\eta:S\rightarrow S$ of degree $n$ respecting $\rho$.

In particular, the monoid $\Et(S,\rho)$ contains endomorphisms of every positive degree.
\end{prop}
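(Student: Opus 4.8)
The plan is to build $\eta$ by showing that the composite $\eta_0\circ\sigma\colon S\to\A^2$ factors through $\sigma$, and then to verify that the resulting self-map of $S$ is étale of degree $n$ by means of Lemma~\ref{lem:multiplicities}. All the computations needed are exactly those made in the proof of Proposition~\ref{prop:descent_for_A1-fibrations}, read now in the opposite direction; the content of the argument is the observation that that chain of implications is reversible once one takes the polynomial $a$ there to be the constant $1/n$ and imposes \eqref{eq:Miy_condition_on_b_ver2} on $b$. I first recall from that proof and from Example~\ref{ex:Miy_Z2stZ2} the structure of $\sigma$: it is a composition $\sigma=\sigma_0\circ\cdots\circ\sigma_{k-1}\colon S\to\A^2=\Spec(\C[x,y])$ of blow-ups through quasi-affine surfaces $V_0=\A^2,\dots,V_k=S$, with $\rho=\pr_1\circ\sigma$, restricting to an isomorphism away from the two degenerate fibers $\rho^{-1}(\pm1)=2A_{\pm1}$; near the exceptional curves $E_{2,\pm1}\cong\Ast$ of the second blow-down the functions $y$ and $v_2=(x^2-1)/y^2$ are regular, $E_{2,\pm1}=\{y=0\}$, the bisection from Example~\ref{ex:Miy_Z2stZ2} is the conic $\{v_2=1\}=\{y^2=x^2-1\}$, and the center $p_{\pm1}\in E_{2,\pm1}$ of the following blow-up lies on it, so its $v_2$-coordinate is $1$; the last blow-up over $p_{\pm1}$ produces $A_{\pm1}$. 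The $\Z_2$-symmetry of Example~\ref{ex:Miy_Z2stZ2} (the automorphism lifting $x\mapsto -x$) interchanges the data over $x=1$ and over $x=-1$.

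For the lift, note first that $T_n(\pm1)=(\pm1)^n$ and, by Example~\ref{ex:Chebyshev}, $U_{n-1}(\pm1)=(\pm1)^{n-1}n$, so \eqref{eq:Miy_eta_0} gives $\eta_0(\pm1,0)=((\pm1)^n,0)$ and $\eta_0$ sends the base points $(\pm1,0)$ of $\sigma$ to base points. Using $T_n^2-1=(x^2-1)U_{n-1}^2$ (see \eqref{eq:Tn-Un_square_relation}) one computes, exactly as for \eqref{eq:u}--\eqref{eq:v_ver2},
\[
\eta_0^*v_2=\frac{x^2-1}{\bigl(\tfrac1n U_{n-1}y+(x^2-1)b\bigr)^2}=\frac{v_2}{\bigl(\tfrac1n U_{n-1}+yv_2b\bigr)^2}.
\]
Restricting to $E_{2,\pm1}$ (where $x=\pm1$, $y=0$) gives $\eta_0^*v_2|_{E_{2,\pm1}}=v_2$, so the rational self-map $\eta_2$ of $V_2$ induced by $\eta_0$ carries $E_{2,\pm1}$ isomorphically onto $E_{2,(\pm1)^n}$ by the identity in the $v_2$-coordinate, hence $p_{\pm1}\mapsto p_{(\pm1)^n}$ (when $n$ is even this uses that, by the $\Z_2$-symmetry, $p_1$ and $p_{-1}$ have the same $v_2$-coordinate). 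Restricting instead to the fiber $F_{x_0}$ over a root $x_0$ of $U_{n-1}$, where $U_{n-1}(x_0)=0$ collapses the bracketed term, gives $\eta_0^*v_2|_{F_{x_0}}=1/\bigl((x_0^2-1)b(x_0)^2\bigr)$; since the roots of $U_{n-1}$ are $x_0=\cos(k\pi/n)$, $k=1,\dots,n-1$, we have $x_0^2-1=-\sin^2(k\pi/n)$, and \eqref{eq:Miy_condition_on_b_ver2} makes this constant equal to $1$, i.e.\ equal to the $v_2$-coordinate of $p_{T_n(x_0)}$. So $\eta_2$ contracts each $F_{x_0}$ onto the center $p_{T_n(x_0)}$. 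A descending induction over the blow-ups composing $\sigma$ — the same bookkeeping as in the proof of Proposition~\ref{prop:descent_for_A1-fibrations}, the extended dual graph of $\bar F_{\pm1}$ admitting no non-trivial symmetry, so that $\eta$ respects the ordering of the fibre components — then shows that $\eta_0\circ\sigma$ sends every (possibly infinitely near) center of $\sigma$ to a center, hence lifts to a morphism $\eta\colon S\to S$ with $\sigma\circ\eta=\eta_0\circ\sigma$. In particular $\rho\circ\eta=\pr_1\circ\eta_0\circ\sigma=T_n\circ\rho$, so $\eta\in\End(S,\rho)$ with $\eta_\rho=T_n$; moreover $\eta$ maps $A_{\pm1}$ isomorphically onto $A_{(\pm1)^n}$, and it maps $F_{x_0}$ onto $A_{T_n(x_0)}$ without contracting it (one checks that the leading coefficient of $\eta_0^*v_2-1$ along $F_{x_0}$ varies with $y$, so the induced map $F_{x_0}\to A_{T_n(x_0)}$ is non-constant).

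To see that $\eta$ is étale of degree $n$ I apply Lemma~\ref{lem:multiplicities} with $\rho_1=\rho_2=\rho$ and $\varphi=\eta_\rho=T_n$. On a general fiber $\{x=x_0\}$ (with $U_{n-1}(x_0)\neq0$) the map $\eta$ is the affine isomorphism $y\mapsto\tfrac1n U_{n-1}^2(x_0)y+(x_0^2-1)U_{n-1}(x_0)b(x_0)$, so $\eta$ is étale on general fibers. For the multiplicity condition \eqref{eq:multiplicities}: since $T_n'(\pm1)=nU_{n-1}(\pm1)=n^2\neq0$ the ramification index $e(T_n,\pm1)$ is $1$, and $\eta$ maps $A_{\pm1}$ isomorphically onto $A_{(\pm1)^n}$, so $\mu(\eta(A_{\pm1}))=2=e(T_n,\pm1)\mu(A_{\pm1})$; for a root $x_0$ of $U_{n-1}$ the point $x_0$ is a non-degenerate critical point of $T_n$, so $e(T_n,x_0)=2$, while $\eta$ maps $F_{x_0}\cong\A^1$ of multiplicity $1$ onto $A_{T_n(x_0)}$ of multiplicity $2$, again in accordance with \eqref{eq:multiplicities}; over every remaining point $e(T_n,x_0)=1$ and $\eta$ maps the fiber isomorphically onto a fiber. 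Lemma~\ref{lem:multiplicities} then gives that $\eta$ is étale. Since $\eta$ restricts to isomorphisms on general fibers while $\eta_\rho=T_n$ has degree $n$, we get $\deg\eta=n$; in particular $\eta$ is not proper for $n\geq2$, since a proper étale self-map of the $\Q$-acyclic surface $S$ (with $e_{top}(S)=1$) would be an isomorphism by Lemma~\ref{lem:etale-basics}(3). Finally, for every $n\geq2$ a polynomial $b$ satisfying \eqref{eq:Miy_condition_on_b_ver2} exists — interpolate the $n-1$ prescribed values at the distinct points $\cos(k\pi/n)$ — so $\Et(S,\rho)$ contains an endomorphism of each degree $n\geq2$, and of degree $1$ (for instance $\id_S$); this gives the last assertion.

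The main obstacle is the base-point bookkeeping in the second step: verifying that $\eta_0\circ\sigma$ genuinely carries each infinitely near center of $\sigma$ to a center of $\sigma$, and in particular that the $v_2$-coordinate of $p_{\pm1}$ (equivalently, the normalization of the fibre coordinate $y$ in the construction of $\sigma$) is exactly what turns \eqref{eq:Miy_condition_on_b_ver2} into the correct condition. This is precisely the chain of equivalences implicit in the proof of Proposition~\ref{prop:descent_for_A1-fibrations}, so the remaining work is to confirm that each implication there is in fact reversible.
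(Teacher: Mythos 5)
Your proposal is correct and follows essentially the same route as the paper: descend $\eta_0$ through the blow-up structure of $\sigma$ by checking that the base points (including infinitely near ones) go to base points via the pullback of $v_2$, use \eqref{eq:Miy_condition_on_b_ver2} to place the images of the contracted fibers $F_{x_0}$ at the centers $p_{\pm1}$ and the next-order coefficient to see that these fibers are not contracted, and conclude with Lemma~\ref{lem:multiplicities}. The only point where you are lighter than the paper is the justification that the lift through each blow-up actually exists: the paper verifies at each stage that the scheme-theoretic preimage of the center differs from the center by the principal divisor $\{U_{n-1}(x)=0\}$, which is what makes the universal property of the blow-up applicable.
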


\begin{proof} 
Let $V_i$, $\sigma_i$ and $v_i$, $i=0,1,2,3$ be as in the proof of Proposition \ref{prop:descent_for_A1-fibrations}. We have $V_3=S$. Let again $F_x$ denote the fiber of $\rho_2=\rho_0\circ \sigma_0\circ \sigma_1\:V_2\to \A^1$ over $x$ and let $p_{\pm 1}\in F_{\pm 1}\cong \Ast$ denote the center of $\sigma_3$. We denote curves and their proper transforms by the same letters. In case of the Miyanishi surface we have $S=V_3$ and $p_{\pm 1}$ both have coordinates $(y,v_2)=(0,\lambda)$ (and $x=\pm 1$) for some $\lambda\in \C^*$, which we may assume to be equal to $1$. Put $\sigma=\sigma_0\circ\sigma_1\circ\sigma_2\circ\sigma_3\: S\to V_0$. 
Since the restriction of $\sigma$ to $S\setminus\rho^{-1}\{-1,1\}$ is an isomorphism, $\eta$ is well defined and quasi-finite on the complement of $\rho^{-1}(-1)\cup\rho^{-1}(1)\cup Z$, where $Z=\bigcup\{F_x:U_{n-1}(x)=0\}$.

The centers of the two blowups constituting $\sigma_0\:V_1\to V_0$ have coordinates $(x,y)=(\pm 1,0)$ and we compute that $\eta_0^{-1}\{(-1,0),(1,0)\}=Z\cup\{(-1,0), (1,0)\}$. By the universal property of a blowup $\eta_0$ lifts to a morphism $\eta_1\:V_1'\to V_1$, where $V_1'$ is the blowup of $V_0$ at $Z\cup \{(-1,0), (1,0)\}$. Since $Z$ is principal, we have $V_1'=V_1$. Moreover, the lift maps isomorphically the fibers over $x=\pm 1$ onto their images, because $\eta_0$ was a local analytic isomorphism at the centers of $\sigma_0$. Put $v_1=\frac{x^2-1}{y}$. The exceptional divisors $E_{1,\pm 1}$ over $x=\pm 1$ are described in the $(y,v_1)$-coordinates by $y=0$ (and $x=\pm 1$). By the definition of $\eta_0(x,y)$ and by \eqref{eq:Tn-Un_square_relation} we have 
$$\eta_1^*v_1=\frac{v_1 U_{n-1}(x)}{\frac{1}{n}U_{n-1}(x)+v_1b(x)}.$$

The centers $w_{\pm}$ of $\sigma_1$ are at the intersections of $E_{1,\pm1}$ with the proper transforms of the fibers $\{(\pm 1,y):y\in \C\}\subseteq V_0$ on $V_1$. The functions $y$ and $v_1=\frac{x^2-1}{y}$ are local parameters there. We check that $\eta_1^{-1}\{w_-,w_+\}=Z\cup \{w_-,w_+\}$. As before, $\eta_1$ lifts to an endomorphism $\eta_2$ of $V_2$ which maps isomorphically the fibers over $x=\pm 1$ onto their images. Put $v_2=\frac{v_1}{y}$.  The exceptional divisors of $\sigma_1$ over $x=\pm 1$ are $E_{2,\pm 1}=F_{\pm 1}$ and are described in the $(y, v_2)$-coordinates by $y=0$ (and $x=\pm 1$). We have 
$$\eta_2^*v_2=\frac{x^2-1}{(\frac{1}{n}U_{n-1}(x)y+(x^2-1)b(x))^2}.$$

Finally, the centers  $p_{\pm 1}\in E_{2,\pm 1}$ of $\sigma_2$ have $(y,v_2)$-coordinates $(0,1)$ (and $x=\pm 1$), and $y$ and $v_2-1$ are local parameters there. We compute that $\eta_2^{-1}\{p_{-1},p_1\}=Z\cup \{p_{-1},p_1\}$, hence $\eta_2$ lifts to an endomorphism $\eta$ of $S$ mapping the fibers over $x=\pm 1$ isomorphically onto their images (and respecting the induced $\A^1$-fibration on $S$). Thus $\eta$, which lifts $\eta_0$, has no base points. To show that $\eta$ is quasi-finite it remains to show that it does not contract curves in $Z$. Put $v_3=\frac{v_2-1}{y}=\frac{x^2-1-y^2}{y^3}$. We have  $$\eta^*v_3=\frac{(x^2-1)-(\frac{1}{n}yU_{n-1}(x)+(x^2-1)b(x))^2}{U_{n-1}(x)(\frac{1}{n}yU_{n-1}(x)+(x^2-1)b(x))^3}.$$ Since the zeros of $U_{n-1}$ are $\cos (k\pi/n)$, $k=1,\ldots, n-1$, by \eqref{eq:Miy_condition_on_b_ver2} there exists a polynomial $s\in \C[x]$ such that $$(x^2-1)b^2(x)=1-s(x)U_{n-1}(x).$$ In particular, $b(x_0)\neq 0$ if $U_{n-1}(x_0)=0$. We obtain
$$\eta^*v_3=\frac{(x^2-1)s(x)-\frac{1}{n^2}U_{n-1}(x)y^2-\frac{2}{n}y(x^2-1)b(x)}{(\frac{1}{n}yU_{n-1}(x)+(x^2-1)b(x))^3},$$ which for every root $x_0$ of $U_{n-1}$ gives $$\eta^*v_3|_{\rho^{-1}(x_0)}=\frac{s(x_0)-\frac{2}{n}yb(x_0)}{(x_0^2-1)^2b(x_0)^3}.$$ Since $b(x_0)\neq 0$, the fiber $\rho^{-1}(x_0)$ is not contracted. In fact, since the above expression is linear in $y$, which is a coordinate on $\rho^{-1}(x_0)$, this fiber is mapped isomorphically onto $E_{2,T_n(x_0)}$. Thus $\eta$ is quasi-finite, hence étale by Lemma \ref{lem:multiplicities}.  
\end{proof}

\subsection{Reduction to $\C^*$-actions on pseudo-planes $S(k,r,a)$.} 

We now complete the proof of Theorem \ref{thm:(G,S)=(Cst,S(kra))}. 

\begin{proof}[Proof of Theorem \ref{thm:(G,S)=(Cst,S(kra))}]
Let $\eta\in \Et_G(S)\setminus \Aut(S)$. By Proposition \ref{lem:Cplus-equivariance} $G$ does not contain a subgroup isomorphic to $\C^+$, so since it is infinite, it contains a subgroup isomorphic to $\C^*$, hence $S$ is a $\Q$-homology plane with an effective action of $\C^*$. 

Suppose the action is not hyperbolic. By Remark \ref{rem:non-hyperbolic_Cst_action} it is the action $\lambda\cdot(x,y)\to (\lambda^px, \lambda^qy)$ on $\A^2=\Spec (\C[x,y])$ for some coprime integers $p,q\geq 0$. We have $p,q\geq 1$, because otherwise $\eta$ respects the projection $\pr_x$ or $\pr_y$, contrary to Corollary \ref{cor:Et(S,p)-for_A1-fib_pseudo-planes}. In particular, the fixed point set of the action of $\C^*$ is $\{0\}$. Since $\eta$ is quasi-finite, it restricts to $U=\A^2\setminus\{0\}$. The morphism $\rho\:U\to \P^1$ given by $\rho(x,y)=[x^q:y^p]$ is a $\C^*$-equivariant $\Ast$-fibration respected by $\eta|_U$.  It has irreducible fibers, two of which are degenerate, namely $\rho^*([0:1])\cong q\Ast$ and $\rho^*([1:0])\cong p\Ast$. By Lemma \ref{lem:multiplicities} $[0:1]$ and $[1:0]$ are the only possible branching points of $\eta_\rho$. By the multiplicativity of the Euler characteristic $e_{top}(\eta_{\rho}^{-1}(\P^1\setminus \{[0:1],[1:0]\}))=0$, so if $\deg \eta_\rho>1$ then $\eta_\rho$ has exactly two branching points and two ramification points and they have the same ramification index. By Lemma \ref{lem:multiplicities} the latter index divides both $p$ and $q$, hence is equal to $1$. Thus $\deg\eta=\deg \eta_\rho=1$; a contradiction. 

We may therefore assume that the action of $\C^*$ on $S$ is hyperbolic, hence the quotient morphism $\rho\:S\to B=S/\C^*$ is an $\Ast$-fibration respected by $\eta$. Since $S$ is affine, $B$ is affine, so $B\cong\A^1$ by  \cite[Lemma 3.4.5.1(1)]{Miyan-OpenSurf}. By \cite[Lemma 1.3(5)]{MiyMa-hp_with_torus_actions} degenerate fibers of $\rho$ have reduced forms $\Ast$ and $\AuA$. By Lemma \ref{lem:factorization_sigma=id}(2) $\deg \eta=\deg \eta_\rho$, in which case Lemma \ref{lem:when_etaB_has_deg=1} says that $\rho$ has at least two non-reduced fibers. Then, since $\kappa(S)=-\8$, we infer from \cite[Theorem 3.4.6.2]{Miyan-OpenSurf} that $\rho$ has precisely two degenerate fibers and they are isomorphic to $k\Ast$ and $\A^1\cup_{\{0\}}r\A^1$ for some integers $k,r\geq 2$. Finally, Lemma \ref{lem:Cst-pseudo-planes} says that there exists $a\in \{1,2,\ldots,k-1\}$ coprime with $k$ such that $S$ is $\C^*$-equivariantly isomorphic to $S(k,r,a)$ with the $\C^*$-action induced from \eqref{eq:Cst-action_Sti(k,r)}. By the discussion in Example \ref{ex:S(k,r,a)}, $S$ is an affine pseudo-plane.

It remains to show that $G\cong \C^*$. More precisely, the action of $\Z_k$ on the universal cover $\ti S\cong \ti S(k,r)$  commutes with the $\C^*$-action \eqref{eq:Cst-action_Sti(k,r)} and maps orbits isomorphically onto orbits, so the induced homomorphism $\Aut(\ti S)\to \Aut(S)$ maps the subgroup $\C^*$ isomorphically onto $\C^*$. We will show that $G$ is exactly the latter $\C^*$. Let $\sigma\in G$. By Lemma \ref{lem:lifting_eta} $\eta$ lifts to a non-proper étale endomorphism $\ti \eta$ of $\ti S$ and $\sigma \in G$ lifts to an automorphism $\ti \sigma$ of $\wt S$. By Theorem \ref{thm:GJC_fails_v2} in the next section, there exist $\lambda\in \C^*$ and a $k$-th root of unity $\eps$, such that
$$\ti \eta(x,y,z)=(\eps\lambda xz^{1-\alpha}R_2(1-z^k), \eps^{-r}\lambda^{-r}yR_0(1-z^k),\eps^{-a}z^{\alpha}R_1(1-z^k))$$ for some $\alpha\in\{0,1\}$ and some polynomials $R_i(t)$. By \cite[Theorem 4.4]{MiyMa-hp_with_torus_actions} we have
\begin{equation}
\ti \sigma(x,y,z)=(tx,t^{-r}y+f(x)F(x,z),\zeta z+x^rf(x)),
\end{equation}
where $t\in \C^*$, $\zeta^k=1$ and $F(x,y)$ is some polynomial in $x,y$ uniquely determined by $f(x)$ and the equation of $\ti S$. Put $\bar z=\zeta z+x^rf(x)$. Since $\eta$ and $\sigma$ commute, their lifts commute up to the action of $\Z_k$ given by \eqref{eq:Zk-action}, i.e.\ $\ti \eta\circ\ti \sigma=\mu*_a(\ti \sigma\circ \ti \eta)$ for some $\mu\in \Z_k$. Composing with the projection onto $x$ and dividing by $\eps\lambda t x$ we get $$\bar z^{1-\alpha}R_2(1-\bar z^k)=\mu z^{1-\alpha}R_2(1-z^k)\in \C[z, x,x^{-1}].$$ If $\alpha=0$ then substituting $z=0$ we get $\bar zR_2(1-\bar z^k)=0$, so since $\deg R_2\geq 0$, we obtain $\bar z=x^rf(x)$, so $f=0$ in this case. If $\alpha=1$ then substituting $z=0$ leads to the equality $R_2(1-(x^rf(x))^k)=\mu R_2(1)$. But since $\deg \eta>1$, Lemma \ref{lem:Ri-condition} implies that $\deg R_2\geq \alpha$, so in the latter case we again get $f=0$. Thus $f=0$ and hence $\ti \sigma(x,y,z)=(tx, t^{-r}y,\zeta z)$. Choosing $\nu$ such that $\nu^a=\zeta$ we get a new lift of $\sigma$ given by $\nu*_a\ti \sigma(x,y,z)=((\nu t)x,(\nu t)^{-r}y,z)$. Thus $\sigma$ has a lift given by \eqref{eq:Cst-action_Sti(k,r)} for $\lambda=\nu t\in \C^*$. It follows that $\sigma$ is identical with the induced action by $\lambda=\nu t$ on $S$.
\end{proof}

\section{Proof of Theorem \ref{thm:GJC_fails}. Non-proper $\C^*$-equivariant étale endomorphisms}\label{sec:Et_Cst}

Let $S$ be a $\Q$-homology plane of negative Kodaira dimension. By Theorem \ref{thm:(G,S)=(Cst,S(kra))} when looking for  counterexamples to the equivariant Generalized Jacobian Conjecture for infinite algebraic groups we are reduced to the case of pseudo-planes $S(k,r,a)$ for $r,k\geq 2$ (see Example \ref{ex:S(k,r,a)}) endowed with their unique hyperbolic $\C^*$-action. In this section we prove Theorem \ref{thm:GJC_fails}, which gives a full description of counterexamples to the $\C^*$-equivariant Generalized Jacobian Conjecture for these surfaces.

\subsection{Necessary conditions on $\eta_\rho$.}\label{ssec:Shabat_poly}

As discussed above, to prove Theorem \ref{thm:GJC_fails} we may assume $S=S(k,r,a)$ for some $r,k\geq 2$ and $a\in \{1,2,\ldots,k-1\}$ coprime with $k$. We denote the hyperbolic $\C^*$-action on $S$ by $\Lambda$ and the quotient $\Ast$-fibration by $$\rho\:S\to B=\Spec (\C[t]).$$ If $\eta$ is a $\C^*$-equivariant endomorphism of $S$ then $\eta\in \Et(S,\rho)$, so we have a commutative diagram \[\xymatrix{ S \ar[r]^{\eta} \ar[d]_{\rho} & S \ar[d]^{\rho} \\ B \ar[r]^{\eta_\rho} & B }\] 
where $\eta_\rho$ is some endomorphism of $B=\Spec (\C[t])$. By Lemma \ref{lem:factorization_sigma=id}(2) $\deg \eta=\deg \eta_\rho$. Therefore, to construct counterexamples to the Generalized Jacobian Conjecture we first need to understand the conditions met by the induced endomorphisms $\eta_\rho$ of degree bigger than $1$. As we will see, they are (specific) Belyi-Shabat polynomials, but not necessarily the Chebyshev polynomials (cf.\ Section \ref{rem:Shabat_vs_Gal}).

\medskip
Let $k,r\geq 2$ and $S=S(k,r,a)$ be as above. Assume $\eta\in \Et(S,\rho)$ is non-proper.  
By Example \ref{ex:S(k,r,a)} $\rho$ has exactly two degenerate fibers and there is a unique choice of a coordinate $t$ on $B$ such that the fibers are
\begin{equation}
F_1=\rho^*(1)=kA_1\cong k \Ast {\ \ and\ \ } F_0=\rho^*(0)=A_0\cup rA_2\cong \A^1\cup_{\{0\}}r\A^1,
\end{equation}
where $k$ is the order of the cyclic group $\pi_1(S)$. 

\bigskip
\begin{lem}[Conditions on $\eta_\rho$]\label{lem:Ri-condition} Let $S=S(k,r,a)$, $k,r\geq 2$ and $\rho\:S\to \Spec (\C[t])$ be as above. Assume $\eta\in \Et(S,\rho)$ and put $\alpha=\eta_\rho(1)\in \{0,1\}$. Then $\eta_\rho$ has degree $d:=\deg \eta$, has at most two critical values, $0$ and $1$, and
\begin{equation}\label{eq:alpha,k,r-condition}
\text{either\ } \alpha=1 \text{\ \ or\ \ } k|r \text{\ and\ } \alpha=0.
\end{equation} Moreover, there exist polynomials $R_0, R_1, R_2$ of degrees $d_0, d_1, d_2$ respectively such that 
\begin{equation}\label{eq:Ri-condition}
\eta_\rho(t)=t(1-t)^{(1-\alpha)\frac{r}{k}}R_0(t)R_2^r(t)=1-(1-t)^{\alpha}R_1^k(t),
\end{equation}

\begin{equation}\label{eq:Ri-degrees}
d_2=\frac{d -\alpha-r(1-\alpha)}{k(r-1)},\  d_1=d_2(r-1)+(1-\alpha)\frac{r}{k},\ d_0=(d_2k+1-\alpha)(r-1-\frac{r}{k})
\end{equation}
and 
\begin{equation}\label{eq:Ri-constant_terms}
(1-t)R_0R_1R_2\text{\ is separable,\ }R_1(0)=R_2(0)=1,\ R_0(0)\neq 0.
\end{equation}
\end{lem}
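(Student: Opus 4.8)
The plan is to extract every assertion from the way $\eta$ maps the two special fibers $F_{0}=A_{0}\cup rA_{2}$ and $F_{1}=kA_{1}$ of $\rho$, using Lemma~\ref{lem:multiplicities}, Lemma~\ref{lem:etale-basics} and the Riemann--Hurwitz formula. Since $\rho$ is the quotient $\Ast$-fibration of the hyperbolic $\C^{*}$-action, Lemma~\ref{lem:factorization_sigma=id}(2) gives $\deg\eta_{\rho}=\deg\eta=d$, so by Lemma~\ref{prop:GJC(1)_holds} $\eta_{\rho}$ is a finite self-map of $\A^{1}=\Spec\C[t]$, i.e.\ a polynomial of degree $d$.

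First I would locate the images of the fiber components. By Lemma~\ref{lem:multiplicities} one has $\mu(\eta(A_{1}))=e(\eta_{\rho},1)\cdot k\geq k\geq 2$ and $\mu(\eta(A_{2}))=e(\eta_{\rho},0)\cdot r\geq r\geq 2$, so the fibers through $\eta(A_{1})$ and $\eta(A_{2})$ are non-reduced, hence equal to $F_{0}$ or $F_{1}$; in particular $\eta_{\rho}(\{0,1\})\subseteq\{0,1\}$. A short case analysis -- using that $A_{0}$ and $A_{2}$ have distinct multiplicities $1$ and $r\geq 2$, together with Lemma~\ref{lem:etale-basics}(5) applied to $A_{0}\cong\A^{1}$ -- excludes $\eta_{\rho}(0)=1$ (which would force $e(\eta_{\rho},0)=k$ from $A_{0}$ and $e(\eta_{\rho},0)r=k$ from $A_{2}$, whence $r=1$), so necessarily $\eta_{\rho}(0)=0$, $e(\eta_{\rho},0)=1$, $\eta(A_{0})=A_{0}$ and $\eta(A_{2})=A_{2}$. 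The same bookkeeping at $t=1$ yields the dichotomy: either $\alpha:=\eta_{\rho}(1)=1$, in which case $\eta(A_{1})=A_{1}$ and $e(\eta_{\rho},1)=1$, or $\alpha=0$, in which case $\eta(A_{1})=A_{2}$ and hence $r=e(\eta_{\rho},1)k$, i.e.\ $k\mid r$ and $e(\eta_{\rho},1)=r/k$. This is exactly \eqref{eq:alpha,k,r-condition}. That $0$ and $1$ are the only critical values follows from the same non-reducedness principle: for $p\notin\{0,1\}$ the divisor $\rho^{*}(p)$ is reduced, so $\eta^{*}\rho^{*}(p)=\rho^{*}\eta_{\rho}^{*}(p)$ is reduced by Lemma~\ref{lem:etale-basics}(1); since no point of $\eta_{\rho}^{-1}(p)$ lies in $\{0,1\}$, every fiber over $\eta_{\rho}^{-1}(p)$ is a reduced $\Ast$, and reducedness of $\sum_{q}e(\eta_{\rho},q)\,\rho^{*}(q)$ forces $e(\eta_{\rho},q)=1$ for all $q\in\eta_{\rho}^{-1}(p)$.

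The factorizations \eqref{eq:Ri-condition} are then read off from the ramification over $0$ and $1$. Over $1$: apart from $t=1$ (which is a preimage precisely when $\alpha=1$, with index $1$), every $q\in\eta_{\rho}^{-1}(1)$ has $\rho^{*}(q)\cong\Ast$ mapped onto $A_{1}$, so $e(\eta_{\rho},q)=k$ by Lemma~\ref{lem:multiplicities}; therefore $1-\eta_{\rho}=(1-t)^{\alpha}w$ with $w$ the $k$-th power of a separable polynomial, and since $1-\eta_{\rho}(0)=1$ exactly one $k$-th root of $(1-\eta_{\rho})/(1-t)^{\alpha}$ takes value $1$ at $t=0$; this root is $R_{1}$, so $R_{1}^{k}=(1-\eta_{\rho})/(1-t)^{\alpha}$ and $R_{1}(0)=1$. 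Over $0$: apart from $t=0$ (index $1$) and $t=1$ (a preimage precisely when $\alpha=0$, with index $r/k$), every remaining $q\in\eta_{\rho}^{-1}(0)$ has $\rho^{*}(q)\cong\Ast$ mapped onto $A_{0}$, giving $e(\eta_{\rho},q)=1$, or onto $A_{2}$, giving $e(\eta_{\rho},q)=r$; grouping the first type into a separable polynomial $R_{0}$ and the second into a separable $R_{2}$, normalized by $R_{2}(0)=1$ (which then forces $R_{0}(0)\neq 0$), yields $\eta_{\rho}=t(1-t)^{(1-\alpha)r/k}R_{0}R_{2}^{r}$. Separability of $(1-t)R_{0}R_{1}R_{2}$ holds because the four relevant root sets -- the general preimages of $1$, the general preimages of $0$ mapped to $A_{0}$, those mapped to $A_{2}$, and $\{1\}$ -- are pairwise disjoint and avoid $t=0$; this gives \eqref{eq:Ri-condition} and \eqref{eq:Ri-constant_terms}. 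Finally, \eqref{eq:Ri-degrees} is the unique solution of the linear system consisting of $d=\alpha+kd_{1}$ (counting $\eta_{\rho}^{-1}(1)$ with multiplicity), $d=1+(1-\alpha)r/k+d_{0}+rd_{2}$ (counting $\eta_{\rho}^{-1}(0)$), and $d_{1}(k-1)+d_{2}(r-1)+(1-\alpha)(r/k-1)=d-1$ (Riemann--Hurwitz for $\eta_{\rho}\colon\P^{1}\to\P^{1}$, totally ramified over $\infty$).

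The main obstacle is the bookkeeping in the second step: one has to list all a priori conceivable images of $A_{0}$, $A_{1}$, $A_{2}$ and eliminate the impossible ones purely from the multiplicity identity of Lemma~\ref{lem:multiplicities} and the cusp-freeness in Lemma~\ref{lem:etale-basics}(5). Once that is done, the shape \eqref{eq:Ri-condition} of $\eta_{\rho}$ is forced and the degree formulas \eqref{eq:Ri-degrees} follow mechanically from Riemann--Hurwitz.
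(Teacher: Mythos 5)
Your proof is correct and follows essentially the same route as the paper: reduce to $\eta_\rho$ via Lemma \ref{lem:factorization_sigma=id}(2), pin down the images and ramification indices of the special fibers with Lemma \ref{lem:multiplicities} and Lemma \ref{lem:etale-basics}(1), factor $\eta_\rho$ and $1-\eta_\rho$ accordingly, and extract the degrees from Riemann--Hurwitz. The only (harmless) variation is that you rule out $\eta_\rho(0)=1$ by a multiplicity contradiction, whereas the paper observes directly that $F_0$ is the only reducible fiber and hence $\eta(F_0)\subset F_0$.
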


\begin{proof} By Lemma \ref{lem:factorization_sigma=id}(2) $\eta$ maps general fibers of $\rho$ isomorphically to their images, hence $\deg \eta=\deg \eta_\rho$. By Lemma \ref{lem:multiplicities} if $C$ is a component of a fiber of $\rho$ and $\rho(C)=t_0\in \C$ then the multiplicity of $t_0$ as a root of $\eta_\rho(t)-\eta_\rho(t_0)$ is $\mu(\eta(C))/\mu(C)$. In particular, it equals $1$ if  $\eta_\rho(t_0)\neq 0,1$ (then $\mu(\eta(C))=1$) or if $t_0$ is a fixed point of $\eta_\rho$. The former implies that the only critical values of $\eta_\rho$ are $0, 1$. The latter holds for instance for $t_0=0$, because $F_0$ is the only reducible fiber of $\rho$, and hence $\eta(F_0)\subset F_0$. Since $F_1$ is a multiple fiber, we have $\eta(F_1)\subset F_0\cup F_1$. Let $\alpha\in \{0,1\}$ be the multiplicity of $1$ in $\eta_\rho(t)-\eta_\rho(1)$. With the above choice of $t$ we have $\alpha=\eta_\rho(1)$.

Let $\{\alpha_1,\ldots,\alpha_{d_0}\}\subset \C$ and $\{\beta_1,\ldots,\beta_{d_2}\}\subset \C$ be all distinct points such that the fibers over them are smooth and are mapped by $\eta$ to $A_0$ and $A_2$ respectively. They appear in $\rho^*(0)$ with multiplicities $1$ and $r$ respectively, so we can write $\eta_\rho$ as $\eta_\rho(t)=t(1-t)^{(1-\alpha)m} R_0(t)R_2^r(t)$, where $m$ is a positive integer, $t(1-t) R_0(t)R_2(t)$ separable, say $R_0(t)=C_0\prod_{i=1}^{d_0}(\alpha_i-t)$, $C_0\neq 0$ and $R_2(t)=(\prod_{i=1}^{d_2}\beta_i)^{-1}\cdot \prod_{i=1}^{d_2}(\beta_i-t)$. If $\alpha=1$ then we may put $m=r/k$, and if $\alpha=0$ then $\eta_\rho(1)=0$, so $\eta(A_1)\subseteq A_2$ and $m=\mu(A_2)/\mu(A_1)=r/k$.

We analyze $\eta_\rho^*(1)$ in a similar way. Let $\{\gamma_1,\ldots,\gamma_{d_1}\}\subset \C$ be all distinct points such that the fibers over them are smooth and mapped by $\eta$ to $A_1$. By Lemma \ref{lem:multiplicities} they appear in $\eta_\rho^*(1)$ with multiplicity $k$, hence $1-\eta_\rho(t)=(1-t)^{\alpha}R_1^k(t)$, where $R_1(t)=C_1(\prod_{i=1}^{d_1}\gamma_i)^{-1}\cdot \prod_{i=1}^{d_1}(\gamma_i-t)$, $C_1\neq 0$ and $R_1(1)\neq 0$. Since $\eta_\rho(0)=0$, we have $C_1^k=1$, so we may assume $C_1=1$.

It remains to prove \eqref{eq:Ri-degrees}. By \eqref{eq:Ri-condition} $$d=1+d_0+r d_2+(1-\alpha)\frac{r}{k}=\alpha+k d_1.$$ Since $\eta_\rho$ has no branching points other then $0,1$, the Riemann-Hurwitz formula gives
$$d-1=d \cdot e_{top}(\A^1)-e_{top}(\A^1)=d_1(k-1)+d_2(r-1)+(1-\alpha)(\frac{r}{k}-1).$$ 
We get $\alpha+kd_1-1=d-1=d_1(k-1)+d_2(r-1)+(1-\alpha)(\frac{r}{k}-1)$, hence $d_1=d_2(r-1)+(1-\alpha)\frac{r}{k}$. Since $d_1=(d-\alpha)/k$, we get the first two formulas in \eqref{eq:Ri-degrees}.
We obtain also $d_0=kd_1-rd_2+(1-\alpha)(-1-\frac{r}{k})=d_2(k(r-1)-r)+(1-\alpha)(r-1-\frac{r}{k})=(d_2k+1-\alpha)(r-1-\frac{r}{k}).$
\end{proof}

\subsection{The universal covers and formulas for étale endomorphisms.}

We now prove a stronger and more detailed version of Theorem \ref{thm:GJC_fails}.

\begin{thmA}\label{thm:GJC_fails_v2} Let $\eta$ be an étale endomorphism of the pseudo-plane $S(k,r,a)$, $k,r\geq 2$ which respects the quotient $\Ast$-fibration. Then there exists a lift of $\eta$ to $\ti S(k,r)$ given by 
\begin{equation}\label{eq:eta_tilda_formula}
\ti \eta(x,y,z)=(\lambda xz^{1-\alpha}R_2(1-z^k),\lambda^{-r}yR_0(1-z^k),z^{\alpha}R_1(1-z^k)),
\end{equation}
for some $\lambda\in\C^*$, some $\alpha\in\{0,1\}$ satisfying \eqref{eq:alpha,k,r-condition} and such that $a=1$ if $\alpha=0$, and some polynomials $R_0, R_1, R_2\in \C[t]$ satisfying \eqref{eq:Ri-condition}-\eqref{eq:Ri-constant_terms}. In particular, $\ti \eta$ and $\eta$ are $\C^*$-equivariant, $d:=\deg \eta=\deg \ti \eta$ and we have
\begin{equation}\label{eq:deg_modulo_k(r-1)}
d\equiv \alpha+r(1-\alpha)\mod k(r-1),
\end{equation}
hence $d\equiv \alpha \mod k$.

Conversely, given integers $k,r\geq 2$, $a\in \{1,\ldots,k-1\}$ coprime with $k$, and $\alpha\in\{0,1\}$ satisfying \eqref{eq:alpha,k,r-condition}, for every positive integer $d$ satisfying \eqref{eq:deg_modulo_k(r-1)} there exist polynomials $R_0,R_1,R_2$ satisfying \eqref{eq:Ri-condition}-\eqref{eq:Ri-constant_terms} and then the formula \eqref{eq:eta_tilda_formula} defines a $\C^*$-equivariant étale endomorphism of $\ti S(k,r)$ of degree $d$ which descends to a $\C^*$-equivariant étale endomorphism of degree $d$ of $S(k,r,a)$.
\end{thmA}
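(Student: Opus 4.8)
The statement has a direct half and a converse half, and the natural strategy is to handle them in that order, using Lemma \ref{lem:Ri-condition} as the backbone of the direct part and Lemma \ref{lem:Thom} (branched self-covers of $\A^1$) together with Example \ref{ex:tiS(k,r)}--\ref{ex:S(k,r,a)} for the converse.

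\textbf{Direct implication.}
Given an étale $\eta\in\Et(S,\rho)$ with $d=\deg\eta=\deg\eta_\rho>1$ (the case $d=1$ being trivial by Lemma \ref{lem:etale-basics}(2)), Lemma \ref{lem:Ri-condition} already supplies $\alpha\in\{0,1\}$, the numerical constraint \eqref{eq:alpha,k,r-condition}, and polynomials $R_0,R_1,R_2$ with the factorizations \eqref{eq:Ri-condition}, degree relations \eqref{eq:Ri-degrees} and separability/normalization \eqref{eq:Ri-constant_terms}. The task is then to lift $\eta$ explicitly to $\ti S(k,r)$. First I would use Lemma \ref{lem:lifting_eta}: since $\ti S(k,r)$ is simply connected, $\eta$ lifts to some $\ti\eta\in\Et(\ti S(k,r))$ with $\pi_a\circ\ti\eta=\eta\circ\pi_a$ and $\deg\ti\eta=d$. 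Next I would pin down the three coordinate components of $\ti\eta$. Recall $\ti\rho=\pr_z$ and $\pi_a'(z)=t=1-z^k$; so $\pr_z\circ\ti\eta$ is a lift over the $z$-line of $\eta_\rho$, and the factorization $1-\eta_\rho(t)=(1-t)^\alpha R_1^k(t)$ forces $\pr_z\circ\ti\eta=\zeta\, z^\alpha R_1(1-z^k)$ for a constant $\zeta$ with $\zeta^k=1$ (matching $z^k$ with $1-t$); absorbing $\zeta$ into the $\Z_k$-action \eqref{eq:Zk-action} — this is exactly where one needs $a=1$ when $\alpha=0$, because otherwise the root of unity cannot be absorbed — I may take the $z$-component to be $z^\alpha R_1(1-z^k)$. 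For the $x$-component, $\C^*$-equivariance of $\ti\eta$ (which holds by Lemma \ref{lem:factorization_sigma=id}(2), since $\rho$ is the quotient $\Ast$-fibration) together with the weight $+1$ of $x$ forces $\pr_x\circ\ti\eta=\lambda x\cdot g(z)$ for some $\lambda\in\C^*$ and some Laurent polynomial $g$; regularity on $\ti S(k,r)$ and the shape of the degenerate fibers $\ti\rho^*(\eps^j)\cong\A^1\cup_{\{0\}}r\A^1$ — via Lemma \ref{lem:multiplicities} applied to the components $A_0,A_2$ — identify $g(z)=z^{1-\alpha}R_2(1-z^k)$. The $y$-component is then determined by the defining equation $x^ry=z^k-1$: substituting the first and third components and using $1-\eta_\rho=(1-t)^\alpha R_1^k$ and $\eta_\rho=t(1-t)^{(1-\alpha)r/k}R_0 R_2^r$, one checks that the only possibility compatible with weight $-r$ is $\lambda^{-r}y R_0(1-z^k)$, and that this is a regular function. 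This yields \eqref{eq:eta_tilda_formula}. The congruences \eqref{eq:deg_modulo_k(r-1)} and $d\equiv\alpha\bmod k$ then follow by reading off $d$ from \eqref{eq:Ri-degrees}: $d=\alpha+kd_1$ and $d_1=d_2(r-1)+(1-\alpha)r/k$ give $d\equiv\alpha+r(1-\alpha)\bmod k(r-1)$ after elementary manipulation.

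\textbf{Converse.}
Here the order of work is: (i) realize the required $R_1$ and $R_2$ as derived from suitable branched self-covers of $\A^1$; (ii) define $R_0$ by the forced algebraic identity; (iii) check \eqref{eq:eta_tilda_formula} is a well-defined endomorphism of $\ti S(k,r)$; (iv) check it is étale via Lemma \ref{lem:multiplicities}; (v) descend to $S(k,r,a)$. For (i): given $d$ with \eqref{eq:deg_modulo_k(r-1)}, set $d_2=(d-\alpha-r(1-\alpha))/(k(r-1))$, $d_1=d_2(r-1)+(1-\alpha)r/k$, $d_0=(d_2k+1-\alpha)(r-1-r/k)$, all nonnegative integers by the congruence and \eqref{eq:alpha,k,r-condition}. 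One wants a polynomial $\psi(t):=1-\eta_\rho(t)$ of degree $d$ whose fiber over $0$ has profile "$\gamma_1,\dots,\gamma_{d_1}$ each with multiplicity $k$" (so $\psi=(1-t)^\alpha R_1^k$ after normalizing), whose fiber over $1$ has profile "a point of multiplicity $\alpha$ at $t=1$" plus "$d_1$... " — more precisely whose fiber over $1=\eta_\rho^{-1}(0)$ consists of $t=0$ with multiplicity $1$, the $\alpha_i$ with multiplicity $1$, and the $\beta_j$ with multiplicity $r$. Condition (1) of Lemma \ref{lem:Thom} is the degree bookkeeping $1+d_0+rd_2+(1-\alpha)r/k=d=\alpha+kd_1$, and condition (2) is precisely the Riemann–Hurwitz count already displayed in the proof of Lemma \ref{lem:Ri-condition}; both are verified by the formulas for $d_0,d_1,d_2$. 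Hence Thom's theorem produces $\eta_\rho\:\A^1\to\A^1$ with exactly these ramification profiles over $0$ and $1$, and reading off its factorizations gives $R_1,R_2$ (normalized so $R_1(0)=R_2(0)=1$) and then $R_0$ is \emph{defined} by $R_0(t):=\eta_\rho(t)\big/\big(t(1-t)^{(1-\alpha)r/k}R_2^r(t)\big)$, which by construction is a polynomial with $R_0(0)\neq 0$ and $(1-t)R_0R_1R_2$ separable, i.e.\ \eqref{eq:Ri-condition}--\eqref{eq:Ri-constant_terms} hold. For (iii): plugging \eqref{eq:eta_tilda_formula} into $x^ry-(z^k-1)$ and using $z^k-1=-\psi(z^k)$-type identities reduces, after a short computation, to the single polynomial identity $\eta_\rho(1-z^k)=1-(1-(1-z^k))^\alpha R_1^k(1-z^k)$, i.e.\ \eqref{eq:Ri-condition} with $t=1-z^k$; so $\ti\eta$ indeed maps $\ti S(k,r)$ to itself, and it is visibly $\C^*$-equivariant and of degree $d$. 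For (iv): $\ti\eta$ is quasi-finite (it maps general $\ti\rho$-fibers $\A^1_*$ isomorphically, since $\pr_x$ is a coordinate there) and by the multiplicity formula \eqref{eq:multiplicities} of Lemma \ref{lem:multiplicities}, applied fiber by fiber using the computed ramification indices of $\eta_\rho$ over $0$ and $1$ and the fiber structure of $\ti\rho$, the ramification divisor has no components in fibers; since $\ti\eta$ is étale on general fibers, Lemma \ref{lem:multiplicities} (converse part) gives that $\ti\eta$ is étale. For (v): $\ti\eta$ commutes with the $\Z_k$-action \eqref{eq:Zk-action} — here one checks $\eps *_a \ti\eta = \ti\eta\circ(\eps*_a)$ using that $R_i$ are polynomials in $z^k$ (hence $\Z_k$-invariant in the relevant sense) and the weight bookkeeping $(1,-r,-a)$; when $\alpha=0$ this is where $a=1$ is used again — so $\ti\eta$ descends to an étale $\C^*$-equivariant endomorphism $\eta$ of $S(k,r,a)$ of degree $d$, necessarily non-proper when $d>1$ by Lemma \ref{lem:etale-basics}(3) since $e_{top}(S)=\chi(\ti S)/k=k/k=1\neq0$... (more carefully $e_{top}(S(k,r,a))\neq0$ because $e_{top}(\ti S(k,r))=k$ and the cover has degree $k$, giving $e_{top}(S)=1$).

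\textbf{Main obstacle.}
The delicate point is step (ii)/(iii) of the direct implication: showing that the lift $\ti\eta$ furnished abstractly by Lemma \ref{lem:lifting_eta} actually has the closed form \eqref{eq:eta_tilda_formula}, and in particular that the seemingly free root of unity $\zeta$ in the $z$-component can always be absorbed into the deck group $\Z_k$ — this is exactly the mechanism forcing $a=1$ in the case $\alpha=0$, and getting this bookkeeping precisely right (rather than off by a unit) is where the argument must be most careful. The regularity of the $y$-component on $\ti S(k,r)$, i.e.\ that $R_0$ is genuinely a polynomial and not merely a rational function, is the other spot requiring the separability clause of \eqref{eq:Ri-constant_terms} to be invoked with care. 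On the converse side the only real content beyond bookkeeping is the verification that Thom's numerical conditions (1)--(2) are implied by \eqref{eq:deg_modulo_k(r-1)} and \eqref{eq:alpha,k,r-condition}, which is a finite computation with the displayed formulas for $d_0,d_1,d_2$.
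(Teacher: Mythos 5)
Your overall architecture matches the paper's: the direct half rests on Lemma \ref{lem:Ri-condition} plus the lift from Lemma \ref{lem:lifting_eta}, and the converse half on Thom's Lemma \ref{lem:Thom} with the ramification profiles dictated by \eqref{eq:Ri-degrees}, followed by the étaleness criterion of Lemma \ref{lem:multiplicities} and the descent check. The converse as you describe it is essentially the paper's argument. There are, however, two problems in your direct implication.

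The genuine gap is your derivation of ``$a=1$ when $\alpha=0$.'' You locate it at the step where the root of unity $\zeta$ in $\eta_3(z)=\zeta z^{\alpha}R_1(1-z^k)$ is absorbed into the deck group, claiming that ``otherwise the root of unity cannot be absorbed.'' This is false: post-composing the lift with $\eps*_a$ multiplies the $z$-component by $\eps^{-a}$, and since $\gcd(a,k)=1$ the map $\eps\mapsto\eps^{-a}$ is a bijection of $\mu_k$, so $\zeta$ can be absorbed for \emph{every} admissible $a$. The constraint $a=1$ comes from a different place, namely from the fact that $\ti\eta$ is a lift of an endomorphism of the quotient, so $\ti\eta\circ(\eps*_a)=\mu*_a\circ\ti\eta$ for some $\mu\in\mu_k$. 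When $\alpha=0$ the third component $R_1(1-z^k)$ is $\Z_k$-invariant, which forces $\mu^{-a}=1$ and hence $\mu=1$; the first component then gives $\eps^{1-a}=1$ for all $\eps\in\mu_k$, i.e.\ $a=1$. As written, your proof never establishes this conclusion of the theorem.

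Secondly, to pin down the $x$-component you invoke $\C^*$-equivariance of $\ti\eta$ via Lemma \ref{lem:factorization_sigma=id}(2). Be careful: equivariance of $\ti\eta$ and $\eta$ is itself one of the conclusions of the theorem, and the proof of case (2) of that lemma already presupposes equivariance of the endomorphism, so your route is circular unless you supply an independent argument. The paper avoids this entirely: it shows that $\eta_1$ (resp.\ $\eta_2$) vanishes on the reduced zero divisor of $xz^{1-\alpha}R_2(1-z^k)$ (resp.\ $yR_0(1-z^k)$) by tracking which fiber components map to $A_2$ and $A_0$, deduces divisibility from separability \eqref{eq:Ri-constant_terms}, and then uses the identity $\eta_1^r\eta_2=(xz^{1-\alpha}R_2(1-z^k))^r\,(yR_0(1-z^k))$ forced by the defining equation of $\ti S(k,r)$ to conclude that the two quotients are mutually inverse units, hence constants $\lambda$, $\lambda^{-r}$. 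Equivariance then falls out of the closed formula. You should replace the weight-space argument by this divisibility argument, or at least justify equivariance without appealing to a conclusion of the theorem being proved.
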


Almost all statements of Theorem \ref{thm:GJC_fails} follow immediately from the above result, except for the claim concerning finite covers. For the latter note that in case $\alpha=1$ the lift $\wt \eta$ is $\Z_k$-equivariant with respect to \eqref{eq:Zk-action} for all $a$ and in case $\alpha=0$ it is $\Z_k$-invariant, because $a=1$ and $k|r$.

\begin{proof}[Proof of Theorem \ref{thm:GJC_fails_v2}] Fix $k,r\geq 2$ and $a\in \{1,\ldots,k-1\}$ coprime with $k$. Put $\ti S=\ti S(k,r)$ and let $\pi\:\ti S\to S=S(k,r,a):=\ti S/\Z_k$ be the quotient morphism of the $\Z_k$-action given in \eqref{eq:Zk-action}, where $\eps$ is a fixed primitive $k$-th root of unity; it is the universal covering morphism for $S$. The $\Ast$-fibration, which is the quotient morphism of the $\C^*$-action, is $\ti \rho=\pr_z|_{\ti S}\:\ti S\to \Spec (\C[z])$. Its degenerate fibers are $$\wt \rho^*(\eps^j)=A_0^j\cup rA_2^j\cong\A^1\cup_{\{0\}}r\A^1,$$ where $A_2^j$ and $A_0^j$ zeros of the ideals $(x,z-\eps^j)$ and $(y,z-\eps^j)$ respectively. We denote by $\rho\:S\to\Spec (\C[t])$ the induced $\Ast$-fibration on $S$ and by $\pi'\: \Spec \C[z]\to \Spec \C[t]$ the morphism of the quotients by $\C^*$-actions induced by $\pi$. Since the induced $\Z_k$-action on $\Spec (\C[z])$ is $z\mapsto \eps^{-a}z$, we see that $z^k$ is a coordinate on $\Spec (\C[t])$. Recall that $\rho^*(0)$ is the only reducible fiber and that $\rho^*(1)\cong k\Ast$. Since the fixed fiber of the $\Z_k$-action is over $z=0$ and since the reducible ones are over roots of unity, we get $t=\pi'(z)=1-z^k$.

Assume $\eta\in \Et(S,\rho)$. Since $\ti S$ is simply connected, by Lemma \ref{lem:lifting_eta} $\eta$ has a lift $\ti \eta\in \Et(\ti S,\ti \rho)$. Let $R_i$, $i=1,2,3$ be as in Lemma \ref{lem:Ri-condition}. By the latter lemma we only need to prove the formula for $\ti \eta$ and that $a=1$ in case $\alpha=0$. Write $\ti \eta(x,y,z)=(\eta_1(x,y,z),\eta_2(x,y,z),\eta_3(z))$, where $(x,y,z)\in \ti S\subset \C^3=\Spec (\C[x,y,z])$ and $\eta_i$ are regular functions on $\ti S$. The function $\eta_3$ depends only on $z$, because $\eta_3=\wt \eta_{\wt \rho}$, i.e.\ $\eta_3$ is the induced morphism on the quotient of the $\C^*$-action. We have $$\eta_\rho\circ\pi'\circ \ti \rho=\eta_\rho\circ \rho\circ \pi=\rho\circ \eta\circ \pi=\rho\circ \pi\circ\ti \eta=\pi'\circ\ti\rho\circ \ti\eta=\pi'\circ\ti \eta_{\ti \rho}\circ \ti\rho,$$ hence $\eta_\rho\circ\pi'=\pi'\circ\ti \eta_{\ti \rho}$, because $\ti \rho$ is surjective. We get $\eta_\rho(1-z^k)=1-\eta_3^k(z)$, so by Lemma \ref{lem:Ri-condition} $\eta_3^k(z)=(z^{\alpha}R_1(1-z^k))^k$, hence composing $\ti \eta$ with an action of some $\eps\in \Z_k$ if necessary, we may assume $\eta_3(z)=z^{\alpha}R_1(1-z^k)$. Since $\ti \eta (x,y,z)\in \ti S$, we have 
$$(\eta_1^r\eta_2)(x,y,z)=\eta_3^k(z)-1=-\eta_\rho(1-z^k)=(z^k-1)z^{(1-\alpha)r}R_0(1-z^k)R_2^r(1-z^k).
$$ But $z^k-1=x^ry$, so for every $(x,y,z)\in \ti S$ we have \begin{equation}\label{eq:eta_i_relation}\eta_1^r(x,y,z)\eta_2(x,y,z)=(xz^{1-\alpha}R_2(1-z^k))^r(yR_0(1-z^k)).\end{equation}

By \eqref{eq:Ri-condition} the curve $A_2$, the smooth fibers of $\rho$ over zeros of $R_2(t)$ and $A_1$ in case $\alpha=0$ are mapped by $\eta$ to $A_0\cup rA_2$. Since the ramification indices of $\eta_\rho$ at the corresponding points of $\Spec (\C[t])$ are respectively $1$, $r$ and $r/k$, by Lemma \ref{lem:multiplicities} images of these curves by $\eta$ have all multiplicity $r$, that is, they are all mapped to $A_2$. Therefore, their inverse images by $\pi$, which are described by the ideals $(x,1-z^k)=(x)$, $(R_2(1-z^k))$ and $z^{1-\alpha}$ respectively, are mapped by $\wt \eta$ to $\pi^{-1}(A_2)$. This implies that $x\in \C[\ti S]$ vanishes on their images by $\eta$ and hence $\eta_1\in \C[\ti S]$ vanishes on the zero set of $xz^{1-\alpha}R_2(1-z^k)$. Similarly, since $\pi^{-1}(A_0)$ is described by the ideal $(y,1-z^k)=(y)\subseteq \C[\ti S]$, we get that $\eta_2\in \C[\ti S]$ vanishes on the zero set of $yR_0(1-z^k)$. By \eqref{eq:Ri-constant_terms} we infer that $xz^{1-\alpha}R_2(1-z^k)|\eta_1$ and $yR_0(1-z^k)|\eta_2$, hence by \eqref{eq:eta_i_relation} $\eta_1(x,y,z)=\lambda xz^{1-\alpha}R_2(1-z^k)$ and $\eta_2(x,y,z)=\lambda^{-r}yR_0(1-z^k)$ for some invertible $\lambda \in \C[\ti S]$. Since $\ti S$ contains $\A^2$, $\lambda$ is a constant, which gives \eqref{eq:eta_tilda_formula}.

Assume $\alpha=0$. Then $k|r$ so we have $\eps*_a(x,y,z)=(\eps x,y,\eps^{-a}z)$, hence $$\ti \eta(\eps*_a(x,y,z))=\ti \eta(\eps x,y,\eps^{-a}z)=(\lambda\eps^{1-a}xzR_2(1-z^k),\lambda^{-r}yR_0(1-z^k),R_1(1-z^k)).$$ Pick $(x,y,z)\in \ti S$ such that $xzR_2(1-z^k)\neq 0$. Since $\ti \eta$ descends to $\eta$, the points $\ti \eta(\eps*_a(x,y,z))$ and $\ti \eta(x,y,z)=(\lambda xzR_2(1-z^k),\lambda^{-r}yR_0(1-z^k),R_1(1-z^k))$ are in the same $\Z_k$-orbit, hence $\eps^{1-a}=1$, i.e, $a=1$. Note that for $\alpha=1$ we have $\ti \eta(\eps*_a(x,y,z))=\eps*_a\ti \eta(x,y,z)$, so there is no additional condition on $a$.

To prove the inverse implication assume that $\alpha\in \{0,1\}$, $k,r\geq 2$ are integers satisfying \eqref{eq:alpha,k,r-condition} and $d>0$ is an integer satisfying \eqref{eq:deg_modulo_k(r-1)}. Define (non-negative) integers $d_0,d_1, d_2$ by \eqref{eq:Ri-degrees}. Put $n=2$, $\lambda_1=((r)_{d_2},((1-\alpha)\frac{r}{k})_{1-\alpha},(1)_{d_1},1)$  and $\lambda_2=((k)_{d_1},(\alpha)_\alpha)$, where $(a)_b$ denotes the sequence $a,a\ldots,a$ of length $b$. By Thom's Lemma (Lemma \ref{lem:Thom}) there exists a polynomial $\varphi\:\A^1\to \A^1$ with ramification profile $(\lambda_1,\lambda_2)$ and $(0,1)$ as the underlying branching locus. Let $x_1$ be a point of ramification index $1$ mapping to $0$. If $\alpha=1$ then let $x_2$ be a point of ramification index $1$ mapping to $1$, otherwise let it be a point of ramification index $\frac{r}{k}$ mapping to $0$. Changing $\varphi(t)$ to $\varphi(t)=\varphi((x_2-x_1)t+x_1)$ we may assume $x_1=0$ and $x_2=1$, so $\varphi$ can be written as in \eqref{eq:Ri-condition} with $t(1-t)R_0R_1R_2$ separable. Multiplying $R_1, R_2$ by appropriate constants we may assume $R_1(0)=R_2(0)=1$. Now the formulas \eqref{eq:eta_tilda_formula} define a $\C^*$-equivariant endomorphism $\ti \eta$ of $\ti S$. Since $a=1$ in case $\alpha=0$, by the above argument it descends to a $\C^*$-equivariant endomorphism of the quotient by the $\Z_k$-action. By Lemma \ref{lem:multiplicities} $\ti \eta$ is étale, so $\eta\circ \pi=\pi\circ\ti\eta$ is étale. Since $\pi\:\ti S\to S$ is surjective and étale, $\ti \eta$ is étale.
\end{proof}

\begin{rem}\label{rem:first}\ \begin{enumerate}[(1)]
\item Our counterexamples to the Generalized Jacobian Conjecture seem to be the first ones in literature which are simply connected and have negative logarithmic Kodaira dimension, and also the first ones which are simply connected and rational. Note that in \cite{Miyanishi-Lectures_on_polynomials} the Remark on page 80 and the comment above Example 2.2.10 are incorrect. Indeed, in the first case the given endomorphism of $\{x^rz+y^d=1\}$, namely $\eta(x,y,z)=(x,y^n,z(1-y^{nd})/(1-y^d))$, is not étale for $y=0$. In the second case $\widetilde{X}$ is not simply connected, as its fundamental group is $\Z_2*\Z_2$.
\item It follows from Example 2.2.2 and Theorem 2.2.8 in \cite{Miyanishi-Lectures_on_polynomials} that the Generalized Jacobian Conjecture fails for the triple cover of the complement of a smooth planar cubic. This is a simply connected surface with logarithmic Kodaira dimension equal to $0$. This example leads also to a $\Q$-acyclic counterexample of logarithmic Kodaira dimension $0$ (see Example 2.2.3(5) loc.\ cit.).
\end{enumerate}
\end{rem}

\subsection{Examples.}
The proof of existence of $\eta\in \Etc(S(k,r,a))$ of degree $d$ in Theorem \ref{thm:GJC_fails_v2} is based on the existence of a specific Belyi-Shabat polynomial as in Lemma \ref{lem:Ri-condition}, which follows from Thom's Lemma \ref{lem:Thom}. In practice, finding the appropriate polynomials often leads to complicated algebraic equations. We analyze the following relatively simple cases.

\begin{example}[Explicit formulas for $\ti S(2,2)$]
Let $\ti \eta$ be a $\C^*$-equivariant endomorphism of $$\ti S=\ti S(2,2)=\{x^2y=z^2-1\}.$$ We claim that $$\ti \eta(x,y,z)=(x\lambda^{-1}U_{d-1}(z),\lambda^2y,T_d(z)),$$ where $\lambda\in \C^*$ and $T_d$, $U_d$ denote the Chebyshev polynomials of degree $d$ of the first and second kind respectively.

To see this note first that by \eqref{eq:Ri-degrees} $d_0=0$, $d_1=(d-\alpha)/2$, $d_2=(d+\alpha)/2-1$. Choose $\lambda=\sqrt{R_0(0)}\neq 0$, $P(z):=z^\alpha R_1(1-z^2)$ and $Q(z):=\lambda z^{1-\alpha}R_2(1-z^2)$. Then $P(1)=R_1(0)=1$,  $\deg P=d$ and $\deg Q=d-1$. With $t=1-z^2$ the relation \eqref{eq:Ri-condition} reads as 
\begin{equation}\label{eq:Pn_Qn:1}
P^2-1=(z^2-1)Q^2.
\end{equation} Differentiating we get $PP'=Q(zQ+(z^2-1)Q')$, so $Q|PP'$. But the above equation implies that  $P$ and $Q$ have no common roots, so $Q|P'$ and hence $P'=\beta Q$ for some $\beta\in \C^*$. We get 
\begin{equation}\label{eq:Pn_Qn:2}
\beta P=zQ+(z^2-1)Q'.
\end{equation}
Now \eqref{eq:Pn_Qn:1} implies that the critical points of $P$ are non-degenerate and the critical values are $\pm 1$. Then by \eqref{eq:Pn_Qn:2} $\pm 1$ are not critical points of $P$. By Lemma \ref{lem:Chebyshev_characterization} $P=T_d$, where $T_d$ is a Chebyshev polynomial of the first kind of degree $d$. By \eqref{eq:Tn-Un_square_relation} we have $Q=U_{d-1}$, where $U_{d-1}$ is a Chebyshev polynomial of the second kind. This gives the above formula for $\ti \eta$.
\end{example}

\begin{example}[Cyclic Galois cases]\label{ex:cyclic_Galois} Assume $S(k,r,a)$, $k,r\geq 2$ and $\eta\in \Etc(S(k,r,a))$ are such that $\eta_\rho(t)$ is cyclic Galois. Then $\frac{d}{dt}\eta_{\rho}=0$ has exactly one (multiple) solution. By \eqref{eq:Ri-condition} $R_1|\frac{d}{dt}\eta_{\rho}$, so since $R_1$ is separable, we get $d_1=1$, hence $R_1(t)=\beta t+1$ for some $\beta\in \C^*$. By \eqref{eq:Ri-condition} $\deg \eta=\deg \eta_\rho=k+\alpha\in\{k,k+1\}$. For $\alpha=1$ we get $$\frac{d}{dt}\eta_{\rho}(t)=R_1^{k-1}(t)(R_1(t)+k(t-1)\frac{d}{dt}R_1(t))=(\beta t+1)^{k-1}((\beta t+1)(k+1)-k(\beta+1)),$$ which has exactly two distinct roots, because $k\geq 2$ and $\beta+1=R_1(1)\neq 0$. Thus $\alpha=0$ and hence $a=1$, so $\deg \eta=k|r$. By \eqref{eq:Ri-degrees} $d_2=0$ and hence $d_0=k-2$. Thus the only possibility is $S_k=S(k,k,1)\cong \{u(1+uv)=w^k\}$, see \eqref{eq:S(k,rk,1)}, and $\eta=\pi\circ j$, with $j$ given in Corollary \ref{cor:embedding_S_into_wtS}. Then $\eta_\rho(t)=1-((\eps-1)t+1)^k$.
\end{example}
 
\begin{example}[A non-Galois case] Assume $(k,r)=(3,2)$. By Lemma \ref{lem:Ri-condition}  $\alpha=1$ and hence $d_0=d_1=d_2=\frac{1}{3}(d-1)$, where $d=\deg \eta_\rho$. By Example \ref{ex:cyclic_Galois} in this case $\eta_\rho$ is not cyclic Galois. It is given by the formula 
\begin{equation}
\eta_\rho(t)=tR_0(t)R_2^2(t)=1-(1-t)R_1^3(t).
\end{equation} 
Write $R_1(t)=a_{d_0}t^{d_0}+\ldots+a_1t+1$, $a_{d_0}\neq 0$. To find $a_i$, $i=1,\ldots, d_0$ note that if $t_0$ is a root of $R_2$ then it is a multiple root of $\eta_\rho$, so $\eta_\rho(t_0)=\frac{d}{dt}\eta_\rho(t_0)=0$, hence $R_1(t)+3(t-1)\frac{d}{dt}R_1(t)$ vanishes at $t_0$. Since $R_2$ is separable and $d_1=d_2$, we get that the polynomials
$R_2(t)$ and $R_1(t)+3(t-1)\frac{d}{dt}R_1(t)$ are equal up to a multiplication by some non-zero constant. Then $R_0$ as above exists if and only if 
\begin{equation}\label{eq:(k,r)=(3,2)_condition}
(R_1(t)+3(t-1)\frac{d}{dt}R_1(t))^2\mid 1-(1-t)R_1^3(t),
\end{equation}
which gives equations on $a_i$, $i=1,\ldots, d_0$.

Consider the case $d_0=1$. Then $d_1=d_2=1$, $d=4$ and $R_1(t)=a_1t+1$ for some $a_1\neq 0$. The condition \eqref{eq:(k,r)=(3,2)_condition} gives $a_1=\frac{1}{3}(-7+\imath \sqrt{2})$, where $\imath$ is a square root of $-1$. Then $R_0(t)=6(1+2\imath \sqrt{2})t+8-4\imath \sqrt{2}$.

Consider the case $d_0=2$. Then $d_1=d_2=2$ and $d=7$. We get $R_2(t)=(-3a_1+1)^{-1}(7a_2t^2-(6a_2-4a_1)t+(-3a_1+1)).$ Solving the condition \eqref{eq:(k,r)=(3,2)_condition} one gets six solutions, one of the simplest-looking are $(a_1,a_2)=(\frac{1}{24}(87+91\imath \sqrt{7}),-\frac{1}{24}(139+63 \imath\sqrt{7}))$, where $\imath$ is a square root of $-1$. In this case $R_0(t)=\frac{21}{128}((112 - 48\imath\sqrt{7}) + (644+ 268\imath\sqrt{7}) t - (999+85\imath\sqrt{7})t^2)$.
\end{example}

\section{Proof of Theorem \ref{thm:deformations}. The hypersurfaces $S(k,\bar r k,1)$ and deformations.}\label{sec:S(k,kr,1)}

Let $S(k,r,a)$, $k,r\geq 2$ (see Example \ref{ex:S(k,r,a)}) and let $\eta\in \Etc(S(k,r,a))$. By Theorem \ref{thm:GJC_fails_v2} some lift of $\eta$ to the universal cover $\ti S(k,r)$ is given by the formula \eqref{eq:eta_tilda_formula}. The case $\alpha=0$ is special, because then $k\mid r$ and $a=1$. We analyze it in detail and then in Section \ref{ssec:deformations} we use it to prove Theorem \ref{thm:deformations}.

\subsection{Surfaces $S(k,\bar rk,1)$.}\label{ssec:S(k,kr,1)}
Assume $r=\bar rk$ for some $\bar r\geq 1$ and $a=1$ (we do not assume $\alpha=0$).  By definition $S(k,\bar rk,1)$ is the quotient of $\ti S(k,\bar r k)=\{(x^k)^{\bar r}y=z^k-1\}$ by the $\Z_k$-action $\eps*(x,y,z)=(\eps x,y,\eps^{-1}z)$. We compute that the homomorphism $\C[u,v,w]\to \C[\ti S(k,\bar r k)]^{\Z_k}$ given by $f(u,v,w)\mapsto f(x^k, y, xz)$ is surjective with the kernel generated by $u(1+u^{\bar r}v)-w^k$, hence 
\begin{equation}\label{eq:S(k,rk,1)}
S(k,\bar rk,1)\cong \{(u,v,w):u(1+u^{\bar r}v)=w^k\}\subseteq \Spec (\C[u,v,w]),
\end{equation}
where the quotient morphism $\pi\:\tilde S(k,\bar r k)\to S(k,\bar r k,1)$ is given by
\begin{equation}
\pi(x,y,z)=(x^k,y,xz).
\end{equation}
The $\C^*$-action is now 
\begin{equation}\label{eq:Cst-action_on_S(k,kr,1)}
\lambda\cdot (u,v,w)=(\lambda^k u,\lambda^{-\bar r k}v,\lambda w)
\end{equation} 
and the quotient $\Ast$-fibration $\rho\:S(k,\bar rk,1)\to \Spec (\C[t])$ is $\rho(u,v,w)=-u^{\bar r} v$. It has two degenerate fibers:
$$F_1=\rho^*(1)\cong k\Ast \text{\ \ and\ \ }F_0=\rho^*(0)\cong \A^1\cup_{\{0\}}r\A^1.$$
The projection $p=\pr_u|_{S(k,\bar rk,1)}\:S(k,\bar rk,1)\to \A^1$ is an $\A^1$-fibration with a unique degenerate fiber $p^*(0)\cong k\A^1$.  By Theorem \ref{thm:GJC_fails_v2} some lift of $\eta$ is given by the formula
\begin{equation*}
\ti \eta(x,y,z)=(\lambda xz^{1-\alpha}R_2(1-z^k),\lambda^{-r}yR_0(1-z^k),z^{\alpha}R_1(1-z^k)),
\end{equation*}
hence we can express $\eta\in \Etc(S(k,\bar rk,1))$ as:
\begin{equation}\label{eq:eta_for_S(k,kr,1)}
\eta(u,v,w)=(u(1-t)^{1-\alpha}\lambda^kR^k_2(t),v\lambda^{-\bar r k}R_0(t),\lambda wR_1(t)R_2(t)),
\end{equation}
where $t=-u^{\bar r}v$. By Lemma \ref{lem:Ri-condition} the induced morphism on the base of $\rho$ is given by the formula
\begin{equation}\label{eq:eta_rho,k|r}
\eta_\rho(t)=t(1-t)^{(1-\alpha)\bar r}R_0(t)R_2^r(t)=1-(1-t)^\alpha R_1^k(t).
\end{equation}

\med If $k\mid \deg \eta$ then we get a surprising factorization.

\begin{lem}[Factorization through the universal cover]\label{lem:k|r} Let $\eta\in \Etc(S(k,r,a))$. If $k\mid\deg \eta$ then $\bar r:=r/k\in \N$, $a=1$ and $\eta$ factorizes $\C^*$-equivariantly through the universal covering morphism, i.e.\ there exists a $\C^*$-equivariant étale morphism $j_\eta\:S(k,\bar r k,1)\to \ti S(k,\bar r k)$ of degree $\deg j_\eta\equiv \bar r\mod (r-1)$
such that $\eta=\pi\circ j_\eta$.
\end{lem}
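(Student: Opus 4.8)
The plan is to extract everything from the explicit normal forms already established. First I would apply Theorem~\ref{thm:GJC_fails_v2}: some lift of $\eta$ to $\ti S(k,r)$ has the shape \eqref{eq:eta_tilda_formula} for an exponent $\alpha\in\{0,1\}$, and moreover $\deg\eta\equiv\alpha\bmod k$. Since $k\mid\deg\eta$ and $k\geq 2$, this forces $\alpha=0$; then \eqref{eq:alpha,k,r-condition} gives $k\mid r$, so that $\bar r:=r/k\in\N$, and the same theorem yields $a=1$. Thus $S:=S(k,\bar r k,1)$ is the hypersurface $\{u(1+u^{\bar r}v)=w^k\}$ of \eqref{eq:S(k,rk,1)}, and by \eqref{eq:eta_for_S(k,kr,1)} with $\alpha=0$,
\[ \eta(u,v,w)=\bigl(\lambda^k\,u(1-t)\,R_2^k(t),\ \lambda^{-\bar r k}\,v\,R_0(t),\ \lambda\,w\,R_1(t)R_2(t)\bigr),\qquad t=-u^{\bar r}v, \]
for some $\lambda\in\C^*$ and polynomials $R_0,R_1,R_2$ satisfying \eqref{eq:Ri-condition}, \eqref{eq:Ri-constant_terms} and \eqref{eq:eta_rho,k|r}.

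The key observation is the identity $u(1-t)=u(1+u^{\bar r}v)=w^k$ on $S$, which lets one take a $k$-th root of the first coordinate of $\eta$ along $S$. Guided by $\pi(x,y,z)=(x^k,y,xz)$, I would simply \emph{define}
\[ j_\eta\colon S(k,\bar r k,1)\longrightarrow \A^3,\qquad j_\eta(u,v,w)=\bigl(\lambda\,w\,R_2(t),\ \lambda^{-\bar r k}\,v\,R_0(t),\ R_1(t)\bigr),\qquad t=-u^{\bar r}v; \]
this is visibly a polynomial morphism. It then remains to verify: (i) $j_\eta$ lands in $\ti S(k,\bar r k)=\{x^{r}y=z^k-1\}$; (ii) $\pi\circ j_\eta=\eta$; (iii) $j_\eta$ is $\C^*$-equivariant; (iv) $j_\eta$ is étale; (v) $\deg j_\eta\equiv\bar r\bmod(r-1)$. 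For (i), using $w^{r}=(w^k)^{\bar r}=u^{\bar r}(1-t)^{\bar r}$ and $u^{\bar r}v=-t$ one computes $(\lambda wR_2(t))^{r}\cdot(\lambda^{-\bar r k}vR_0(t))=-t(1-t)^{\bar r}R_0R_2^{r}(t)=-\eta_\rho(t)$, while $R_1(t)^k-1=-\eta_\rho(t)$ by the case $\alpha=0$ of \eqref{eq:eta_rho,k|r}, so the defining equation of $\ti S(k,\bar r k)$ holds identically on $S$. For (ii) one substitutes and uses $u(1-t)=w^k$ once more. For (iii), $t$ is $\C^*$-invariant and the weights of \eqref{eq:Cst-action_on_S(k,kr,1)} on the source match those of \eqref{eq:Cst-action_Sti(k,r)} on the target because $\bar r k=r$.

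Finally, (iv) is automatic: $\pi$ is étale, being the quotient by the free $\Z_k$-action \eqref{eq:Zk-action}, and $\pi\circ j_\eta=\eta$ is étale, so the differential of $j_\eta$ is an isomorphism at every point; since source and target are smooth this means $j_\eta$ is étale (cf.\ the discussion in Section~\ref{ssec:Etale_endo}). For (v), from $\deg\eta=\deg\pi\cdot\deg j_\eta=k\deg j_\eta$ and the congruence $\deg\eta\equiv r\bmod k(r-1)$ provided by Theorem~\ref{thm:GJC_fails_v2} (the case $\alpha=0$ of \eqref{eq:deg_modulo_k(r-1)}), one gets $\deg j_\eta=\deg\eta/k\equiv\bar r\bmod(r-1)$. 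I do not expect a real obstacle: the only care needed is the bookkeeping of the powers of $\lambda$ and of $(1-t)$ in step~(i), and note that we never have to \emph{choose} a $k$-th root, since $j_\eta$ is written down explicitly and $\pi\circ j_\eta=\eta$ is checked directly.
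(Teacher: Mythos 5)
Your proposal is correct and follows essentially the same route as the paper: Theorem~\ref{thm:GJC_fails_v2} forces $\alpha=0$, $k\mid r$ and $a=1$, then $j_\eta$ is written down by the identical explicit formula (note $\lambda^{-\bar rk}=\lambda^{-r}$), the containment $j_\eta(S)\subseteq\ti S(k,\bar rk)$ is verified by the same computation via \eqref{eq:eta_rho,k|r}, and étaleness of $j_\eta$ is deduced from $\pi\circ j_\eta=\eta$ exactly as in the paper. Your write-up merely spells out more of the verifications (equivariance, the degree congruence) that the paper leaves implicit.
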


\begin{proof} By Theorem \ref{thm:GJC_fails_v2} we have $\alpha=0$, $r=\bar r k$ for some positive integer $\bar r$, and $a=1$. Moreover, $\eta$ is given by the formula \eqref{eq:eta_for_S(k,kr,1)}. Put 
\begin{equation}\label{eq:eta_S(k,rbark,1)} 
j_\eta(u,v,w)=(w\lambda R_2(t),v\lambda^{-r}R_0(t),R_1(t))\in \Spec (\C[x,y,z]),
\end{equation} where $t=-u^{\bar r}v$. Since on $S(k,\bar r k,1)$
$$R_1^k(t)-1=-t(1-t)^{\bar r}R_0(t)R_2^r(t)=u^{\bar r}v(1+u^{\bar r}v)^{\bar r}R_2^r(t)R_0(t)=w^rR_2^r(t) vR_0(t),$$ 
we see that $j_\eta(S(k,\bar rk,1))\subseteq \ti S(k,\bar rk)$. We check that $\pi\circ j_\eta=\eta$. Since $\eta$ is étale, $j_\eta$ is étale. 
\end{proof}

Note that since the universal cover $\pi\:\ti S(k,\bar rk)\to S(k,\bar rk,1)$ has degree $k$, the condition $k\mid\deg \eta$ is necessary for the existence of a factorization of $\eta$ through $\ti S$. Note also that if $\deg j_\eta=1$ then $d=k$, so by \eqref{eq:Ri-degrees} $\alpha=0$ and $\bar r=1$. This case is of particular interest. Put
\begin{equation*}
\ti S_k=S(k,k) \text{\ \ and\ \ }S_k=S(k,k,1).
\end{equation*}

\begin{cor}[$S_k$ embeds into $\ti S_k$]\label{cor:embedding_S_into_wtS} For every $k$-th root of unity  $\eps\neq 1$ the morphism
\begin{equation}
j(u,v,w)=(w,vR_0(-uv),R_1(-uv)),
\end{equation}
where $R_1(t)=(\eps-1)t+1$ and $R_0(t)=\frac{((\eps-1)t+1)^k-1}{t(t-1)}\in \C[t]$, defines an open $\C^*$-equivariant embedding $S_k=\ti S_k/\Z_k\mono \ti S_k.$ Different choices of $\eps\in \Z_k\setminus\{1\}$ lead to embeddings which are not related by a composition with the $\Z_k$-action on $\ti S_k$.
\end{cor}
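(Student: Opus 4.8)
Set $R_1(t)=(\eps-1)t+1$. Since $\eps\neq 1$ is a $k$-th root of unity, the polynomial $R_1(t)^k-1$ vanishes at $t=0$ and at $t=1$ (where $R_1(1)^k=\eps^k=1$), so $R_0(t):=(R_1(t)^k-1)/(t(t-1))$ is a genuine polynomial of degree $k-2$ with $R_0(0)=-k(\eps-1)\neq 0$, and $(1-t)R_0R_1$ is separable (its roots are $1$, the number $1/(1-\eps)$, and the $k-2$ distinct numbers $(\zeta-1)/(\eps-1)$ with $\zeta\in\mu_k\setminus\{1,\eps\}$). The plan is then to recognise $j$ as the factorising map $j_\eta$ of Lemma~\ref{lem:k|r} for a suitable $\eta$. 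Concretely, take the datum $\alpha=0$, $r=k$, $a=1$, $d=k$, $R_2\equiv 1$, which satisfies \eqref{eq:alpha,k,r-condition} and \eqref{eq:deg_modulo_k(r-1)}; the degrees forced by \eqref{eq:Ri-degrees} are $d_0=k-2$, $d_1=1$, $d_2=0$, so the only freedom in $R_1$ is $R_1(t)=(\eps-1)t+1$ with $\eps\in\mu_k\setminus\{1\}$ (the value $\eps=1$ would make $\eta_\rho\equiv 0$), and $R_0,R_2$ are then the ones above. Hence the converse direction of Theorem~\ref{thm:GJC_fails_v2} yields a $\C^*$-equivariant étale endomorphism $\eta$ of $S_k=S(k,k,1)$ of degree $k$, given by \eqref{eq:eta_for_S(k,kr,1)} with $\lambda=1$.

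Next I would carry out three direct substitutions. Writing $t=-uv$, on $S_k\iso\{u(1+uv)=w^k\}$ one has $w^k=u(1-t)$, so $w^k\cdot vR_0(t)=(uv)(1-t)R_0(t)=-t(1-t)R_0(t)=t(t-1)R_0(t)=R_1(t)^k-1$; this is exactly the equation of $\ti S_k=\ti S(k,k)=\{x^ky=z^k-1\}$ evaluated at $j(u,v,w)=(w,vR_0(t),R_1(t))$, so $j$ lands in $\ti S_k$. Since $t=-uv$ is invariant under the $\C^*$-action \eqref{eq:Cst-action_on_S(k,kr,1)}, comparison with \eqref{eq:Cst-action_Sti(k,r)} (for $r=k$) gives $j(\lambda\cdot(u,v,w))=\lambda\cdot j(u,v,w)$, so $j$ is $\C^*$-equivariant. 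Finally, using $\pi(x,y,z)=(x^k,y,xz)$ and $w^k=u(1-t)$ once more, $\pi\circ j(u,v,w)=(w^k,vR_0(t),wR_1(t))=(u(1-t),vR_0(t),wR_1(t))=\eta(u,v,w)$, so $j$ is precisely the map $j_\eta$ attached to $\eta$ in Lemma~\ref{lem:k|r}.

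From here the conclusion is quick: by Lemma~\ref{lem:k|r} (or directly, since $\pi$ is finite étale and $\pi\circ j=\eta$ is étale) the map $j$ is $\C^*$-equivariant and étale, and the degree identity $k=\deg\eta=\deg\pi\cdot\deg j=k\cdot\deg j$ forces $\deg j=1$; an étale morphism of degree one from the connected (irreducible) surface $S_k$ has all fibres of cardinality one, hence is injective, hence an open immersion, which gives the desired open $\C^*$-equivariant embedding $S_k\mono\ti S_k$. For the final clause, if $j_{\eps'}=g\circ j_{\eps}$ with $g$ a deck transformation of $\pi$, i.e.\ $g(x,y,z)=(\zeta x,y,\zeta^{-1}z)$ with $\zeta^k=1$ as in \eqref{eq:Zk-action} for $a=1$, then comparing third coordinates gives $(\eps'-1)t+1=\zeta^{-1}\bigl((\eps-1)t+1\bigr)$, and the constant terms force $\zeta=1$ and then $\eps'=\eps$. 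The only step that needs genuine care is the bookkeeping $\deg(\pi\circ j)=\deg\pi\cdot\deg j$ together with the implication ``injective $+$ étale $\Rightarrow$ open immersion''; all the rest is substitution into formulas already available in the excerpt.
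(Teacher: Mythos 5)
Your proof is correct and follows essentially the same route as the paper's: both produce a degree-$k$ element of $\Etc(S_k)$ via Theorem \ref{thm:GJC_fails_v2} with $\alpha=0$, factor it through the universal cover using Lemma \ref{lem:k|r}, pin down $R_2=1$, $R_1(t)=(\eps-1)t+1$, $R_0=(R_1^k-1)/(t(t-1))$ from \eqref{eq:Ri-degrees} and \eqref{eq:Ri-condition}, and distinguish the embeddings for different $\eps$ by comparing coordinates against the $\Z_k$-action (you use the third coordinate, the paper the first). Your version is slightly more careful on one point: by verifying the conditions \eqref{eq:Ri-condition}--\eqref{eq:Ri-constant_terms} directly for each $\eps\in\Z_k\setminus\{1\}$ and invoking the converse direction of the theorem, you get the statement for \emph{every} $\eps$, whereas the paper's existence-first phrasing literally only produces \emph{some} $\eps$; you also make explicit the degree bookkeeping $k=\deg\pi\cdot\deg j$ that turns $j$ into an open immersion.
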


\begin{proof} Put $\alpha=0$. By Theorem \ref{thm:GJC_fails_v2} there exists $\eta\in \Etc(S_k)$ of degree $k$. By Lemma \ref{lem:k|r} it factorizes through a $\C^*$-equivariant embedding into $\ti S_k$ given by \eqref{eq:eta_S(k,rbark,1)}. Composing with the action of $\lambda^{-1}$ we may assume $\lambda=1$. By \eqref{eq:Ri-degrees} $\deg R_2=0$ (hence $R_2=1$), $\deg R_1=1$ and $\deg R_0=k-2$. By \eqref{eq:eta_rho,k|r} $\eta_\rho(t)=t(1-t)R_0(t)=1-R_1^k(t)$. In particular, $\eta_\rho(1)=0=1-R_1^k(1)$, so since $R_1(0)=1$, we get $R_1(t)=(\eps-1)t+1$ for some $k$-th root of unity $\eps\neq 1$ and hence $R_0(t)=\frac{((\eps-1)t+1)^k-1}{t(t-1)}\in \C[t]$. Since the first coordinate of $j$ does not depend on $\eps$, we infer that different choices of $\eps\in \Z_k$ lead to embeddings which are not related by the $\Z_k$-action on $\ti S_k$.
\end{proof}

\begin{example}(Explicit embedding of $S_2$ into $\ti S_2$). For $k=2$ we have $$S_2\cong \{(u,v,w):u(1+uv)=w^2\},$$ $R_1(t)=-2t+1$ and $R_0(t)=4$, hence  $j(u,v,w)=(w,4v,1+2uv)$. Then $\eta=\pi\circ j$ is given by
\begin{equation}
\eta(u,v,w)=(u(1+uv),4v,w(1+2uv)).
\end{equation}
\end{example}

\medskip

\medskip \subsection{Deformations of étale endomorphisms.}\label{ssec:deformations}
 In this section we prove Theorem \ref{thm:deformations}. We denote by $\A^\8$ the infinite affine space defined as the colimit of open immersions $\id_{\A^n}\times \{0\}\:\A^n\mono \A^n\times \A^1=\A^{n+1}$, $n\geq 1$.
 \begin{notation}
 Given a variety $S$ the automorphism group $\mathrm{Aut}(S)$ of $S$ acts on the monoid of étale endomorphisms $\Et(S)$ by left and right compositions. We denote the set of double cosets ${\mathrm{Aut}(S)\setminus}\!\Et(S)\!{/\mathrm{Aut}(S)}$ of these actions by $\EC(S)$. 
 \end{notation}
 Recall that for every polynomial $P\in\C[x]$ the surface $\ti S(k,r)$ has an automorphism $\Theta^{P}:=\Theta^{P}_1$ (see Example \ref{ex:tiS(k,r)}) given by 
 \begin{equation}\label{eq:Theta_P} \Theta^{P}(x,y,z):=\Theta^{P}_1(x,y,z)=(x,y+x^{-r}((z+P(x) x^r)^k-z^k), z+P(x)x^r). 
 \end{equation}
Combining it with the factorization obtained in Corollary \ref{cor:embedding_S_into_wtS} we obtain the following result, which establishes Theorem \ref{thm:deformations}. 
\begin{prop}[Family of étale endomorphisms]\label{prop:deformations} 
Let $S=S(k,\bar r k,1)\cong \{(u,v,w):u(1+u^{\bar r}v)=w^k\}\subseteq \Spec (\C[u,v,w])$, $k\geq 2$, $\bar r\geq 1$ and let $\eta \in \Etc(S)$ be such that $k\mid\deg \eta$. Write $\eta=\pi\circ j_\eta$ for $j_\eta\:S\to\ti S(k,\bar rk)$ as in Lemma \ref{lem:k|r}. Then the map $\Omega_\eta\:\A^\8\to \EC(S)$ defined by 
\begin{equation}
\Omega_\eta(\mathbf{a}):=[\pi\circ \Theta^{F(\mathbf{a})}\circ j_\eta], 
\end{equation}
where $F(\mathbf{a})=F(a_1,\ldots,a_n)=1+\sum_{i=1}^n a_{i}x^{ri}\in \C[x^r]$ and $r=\bar r k$, is injective.
 
In particular, for every $N\geq 0$ there exist arbitrarily high-dimensional families of étale endomorphism of the pseudo-plane $S$ of degree $k(N(\bar r k-1)+\bar r)$ whose members are different even after dividing by the action of $\Aut(S)$ by left and right compositions. 
\end{prop}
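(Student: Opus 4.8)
The plan is to establish the injectivity of $\Omega_\eta$ by analyzing how the automorphisms $\Theta^{F(\mathbf{a})}$ interact with the factorization $\eta=\pi\circ j_\eta$, and then to extract the numerical consequence about degrees and family dimensions. First I would fix the key compatibility: since $\Theta^{F(\mathbf{a})}$ is an automorphism of $\ti S(k,r)$ that is $\C^*$-equivariant and (because $r=\bar rk$ and $a=1$) $\Z_k$-equivariant with respect to \eqref{eq:Zk-action}, it descends to an automorphism of $S(k,\bar rk,1)$; hence $\pi\circ\Theta^{F(\mathbf{a})}\circ j_\eta$ is a genuine $\C^*$-equivariant étale endomorphism of $S$, and the map $\Omega_\eta$ is well-defined into $\EC(S)$. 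Each such composite has the same degree as $\eta$ because $\Theta^{F(\mathbf{a})}$ is an automorphism, and its base morphism $\eta_\rho$ is unchanged.

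The heart of the argument is injectivity. Suppose $\Omega_\eta(\mathbf{a})=\Omega_\eta(\mathbf{b})$, i.e.\ $\pi\circ\Theta^{F(\mathbf{a})}\circ j_\eta$ and $\pi\circ\Theta^{F(\mathbf{b})}\circ j_\eta$ differ by pre- and post-composition with automorphisms $\psi,\varphi\in\Aut(S)$. I would lift everything to $\ti S(k,r)$ using Lemma \ref{lem:lifting_eta}: the automorphisms $\psi,\varphi$ lift to automorphisms $\ti\psi,\ti\varphi$ of $\ti S(k,r)$, and the two étale endomorphisms of $S$ lift to $\Theta^{F(\mathbf{a})}\circ\ti\jmath$ and $\Theta^{F(\mathbf{b})}\circ\ti\jmath$ for suitable lifts $\ti\jmath$ of $j_\eta$ (up to the deck $\Z_k$-action). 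The structure theorem \cite[Theorem 4.4]{MiyMa-hp_with_torus_actions} for $\Aut(\ti S(k,r))$, already used in the proof of Theorem \ref{thm:(G,S)=(Cst,S(kra))}, writes every automorphism in the form $(x,y,z)\mapsto(tx,t^{-r}y+f(x)F(x,z),\zeta z+x^rf(x))$; the $\C^*$-equivariant ones are exactly those with $f\in\C[x^r]$, so $\ti\psi,\ti\varphi$ are, up to the torus and $\Z_k$, of the shape $\Theta^{G}$ for polynomials $G\in\C[x^r]$ together with scalings. Composing, the relation becomes an equality of the form $\Theta^{F(\mathbf{a})}\circ(\text{torus})=\Theta^{G_2}\circ(\text{torus})\circ\Theta^{F(\mathbf{b})}\circ\Theta^{G_1}$ after absorbing the rigid pieces of $j_\eta$; reading off the $z$-coordinate (where $\Theta^{P}$ adds $P(x)x^r$ and torus scalings act linearly) and comparing the $u$-torus weights forces the torus scalars to be trivial on the relevant coordinates and reduces the identity to $F(\mathbf{a})+(\text{lower-order correction from }G_1)=F(\mathbf{b})+\cdots$ as polynomials in $\C[x^r]$. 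Since $F(\mathbf{a})-F(\mathbf{b})=\sum_i(a_i-b_i)x^{ri}$ has zero constant term while the corrections $G_1,G_2$ contribute in a controlled way that must also reconcile with the first coordinate $w\lambda R_2(t)$ of $j_\eta$ being unaffected, I expect to conclude $a_i=b_i$ for all $i$, hence $\mathbf{a}=\mathbf{b}$.

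The main obstacle will be exactly this last bookkeeping step: pinning down precisely which automorphisms of $\ti S(k,r)$ can occur on the left and right so as to preserve the image $j_\eta(S)$ inside $\ti S(k,r)$ (an open subset, not all of $\ti S(k,r)$), and showing the only way two members $\Theta^{F(\mathbf a)}\circ j_\eta$ and $\Theta^{F(\mathbf b)}\circ j_\eta$ can be identified is via the trivial automorphism on the $\A^\infty$-parameter. Concretely one must check that the "$f(x)F(x,z)$" and torus ambiguities do not mix the $a_i$'s: this follows because $j_\eta$ has a fixed $z$-component $R_1(t)$ with $t=-u^{\bar r}v$ independent of the deformation, so any post-composition with $\Theta^{G}$ changes the $z$-coordinate of the composite by $G$ evaluated at $j_\eta$'s first coordinate, a term that is structurally distinguishable from the pre-composition contribution $\Theta^{F(\mathbf a)}$ which modifies $z$ before applying $j_\eta$. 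Once injectivity is in hand, the final numerical statement is immediate: for $\mathbf a$ ranging over $\A^n\subset\A^\infty$ with $a_n\neq0$ we obtain an $n$-dimensional family; choosing $\eta$ of degree $k\bar r$ (the embedding case $\deg j_\eta=1$ from Corollary \ref{cor:embedding_S_into_wtS} composed with a degree-$(N(\bar rk-1)+\bar r)$ self-map is unnecessary — instead directly take $j_\eta$ of the degree given by \eqref{eq:Ri-degrees} with $d_2=N$, so that $\deg\eta=\deg\pi\cdot\deg j_\eta=k(N(\bar rk-1)+\bar r)$ by Lemma \ref{lem:k|r}), and noting that $\Theta^{F(\mathbf a)}$ does not change the degree, every member of the family has degree $k(N(\bar rk-1)+\bar r)$, and by injectivity of $\Omega_\eta$ they remain pairwise distinct in $\EC(S)$, i.e.\ after dividing by the $\Aut(S)$-action on both sides.
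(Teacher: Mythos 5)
Your overall strategy --- reduce to the known structure of the automorphism group and then compare coordinates --- is the same as the paper's, and your degree count at the end (taking $d_2=N$ in \eqref{eq:Ri-degrees} so that $\deg\eta=k(N(\bar rk-1)+\bar r)$) is correct. But the core of the argument is missing, and the part you do sketch contains a substantive error.

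First, the error. The double coset $\EC(S)$ is taken with respect to the \emph{full} group $\Aut(S)$, not its $\C^*$-equivariant part, so you may not assume $\psi,\varphi$ (or their lifts) are $\C^*$-equivariant or of the shape $\Theta^G$ with $G\in\C[x^r]$. (In fact no nontrivial $\Theta^P$ is $\C^*$-equivariant, since equivariance would force $\lambda^rP(\lambda x)=P(x)$.) By \cite[4.7]{MiyMa-hp_with_torus_actions}, with $a=1$ and $r=\bar rk$ the lifts of automorphisms of $S$ are compositions of torus/root-of-unity scalings with shears $z\mapsto z+x^{r+k-1}g(x^k)$, which descend to $w\mapsto \lambda w+u^{\bar r+1}Q(u)$ on $S$; the corresponding $G(x)=x^{k-1}g(x^k)$ lies in $\C[x^r]$ only when $g=0$. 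These shears are precisely the automorphisms most likely to identify two members $\Theta^{F(\mathbf a)}\circ j_\eta$ and $\Theta^{F(\mathbf b)}\circ j_\eta$, and your bookkeeping excludes them from the outset. The paper's proof spends most of its effort showing they cannot occur: writing $\alpha_i^*u=\lambda_i^ku$, $\alpha_i^*w=\lambda_iw+u^{\bar r+1}Q_i(u)$ and $\eta_i^*u=T^k$, $\eta_i^*w=R_1(t)T+T^{r+1}F_i(T)$ with $T=wR_2(t)$ inside $\C[u^{\pm1},w]$, one evaluates the intertwining relation at $w=0$ and uses $R_2(1)\neq0$, $R_1(1)\neq0$ to kill $Q_2$ and the root of unity, and then uses the fact that $T^{k+r-1}Q_1(T^k)$ has exponents $\equiv k-1\pmod k$ while $T^rF_i(T)$ has exponents $\equiv0\pmod k$ (this is where $F_i\in\C[x^r]\subset\C[x^k]$ and $k\geq2$ enter) to kill $Q_1$.

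Second, even after the shears are disposed of, the torus scalar is \emph{not} forced to be trivial as you assert; one only obtains $F_1(x)=\lambda_2^rF_2(\lambda_2x)$, and the conclusion $F_1=F_2$ requires both normalizations $F_i(0)=1$ (giving $\lambda_2^r=1$) and $F_i\in\C[x^r]$ (giving $F_2(\lambda_2x)=F_2(x)$). Since you explicitly defer the decisive computation (``I expect to conclude $a_i=b_i$''), and the step you defer is exactly where the proof lives, the proposal as written does not establish injectivity.
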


\begin{proof} By Theorem \ref{thm:GJC_fails_v2} for every integer $N\geq 0$ the surface $S=S(k,\bar rk,1)$ has a $\C^*$-equivariant étale endomorphism of degree $Nk(\bar rk-1)+\bar rk$, so we only need to prove the injectivity of $\Omega_\eta$. Let $\mathbf{a}_i=(a_{i,1},\ldots, a_{i,n_i})\in \A^\8$, $i=1,2$ be such that the étale classes of $\eta_i:=\pi\circ \Theta^{F(\mathbf{a}_i)}\circ j_\eta$ in $\EC(S)$  are the same, i.e.\ that there exist automorphisms $\alpha_i\in \Aut(S)$, $i=1,2$, such that $\alpha_1\circ \eta_1=\eta_2\circ \alpha_2$. Set $F_i=F(\mathbf{a}_i)\in \C[x]$, $i=1,2$. Put $t=-u^{\bar r}v$. For some $\lambda\in \C^*$ we have $j_\eta=j\circ \Lambda_\lambda$, where $\Lambda_\lambda(u,v,w)=\lambda\cdot (u,v,w)$ and  
\begin{equation}
j(u,v,w)=(wR_2(t),vR_0(t),R_1(t)),
\end{equation}
hence $$\alpha_1\circ\pi\circ \Theta^{F_1}\circ j\circ \Lambda_\lambda=\pi\circ \Theta^{F_2}\circ j\circ \Lambda_\lambda\circ \alpha_2.$$  Composing both sides with $\Lambda_{\lambda^{-1}}$ and replacing $\alpha_2$ with $\Lambda_\lambda\circ \alpha_2 \circ \Lambda_{\lambda^{-1}}$, we may assume that $\lambda=1$ and hence that $$\alpha_1\circ\pi\circ \Theta^{F_1}\circ j=\pi\circ \Theta^{F_2}\circ j\circ  \alpha_2.$$
 
Let $[m]_k$ denote taking the integer $m$ modulo $k$, that is $[m]_k\in\{0,1,\ldots,k-1\}$ and $k|(m-[m]_k)$. By \cite[4.7]{MiyMa-hp_with_torus_actions} every element of $\Aut (S(k,r,a))$ has a lift in $\Aut(\wt S(k,r))$ which can be written as a composition $\beta_{J_0}\circ \beta_H$, where $\beta_H(x,y,z)=(\lambda x,\lambda^{-r}y,\bar \eps z)$ for some $\lambda\in\C^*$ and some $k$-th root of unity $\bar \eps$ and $\beta_{J_0}(x,y,z)=(x,\ldots,z+x^{r+[-a-r]_k}g(x^k))$ for some $g\in \C[x]$ (the second coordinate can be computed from the equation of $\ti S(k,r)$, so we skip the precise formula, because we will not need it). Since in our case $r=\bar rk$, we have $\beta_{J_0}(x,y,z)=(x,\ldots, z+x^{r+k-1}g(x^k))$. This implies in turn that every automorphism of $S$ has the form  
\begin{equation} 
\alpha_{\lambda,Q}(u,v,w)=(\lambda^k u,\ldots,\lambda \bar \eps w+u^{\bar r +1}Q(u)) 
 \end{equation}
 for some $\lambda \in \C^*$, some $k$-th root of unity $\bar \eps$ and some polynomial $Q\in \C[u]$. Replacing $\lambda$ with $\lambda/\bar \eps$ we see that without loss of generality we may assume $\bar \eps=1$. For both $\alpha_i$ we can therefore write $\alpha_i=\alpha_{\lambda_i,Q_i}$, for some $\lambda_i\in \C^*$ and $Q_i\in \C[u]$, hence  $$\alpha_i^*u=\lambda_i^ku \text{\ \ and \ \ } \alpha_i^*w=\lambda_i w+u^{\bar r+1}Q_i(u).$$

We have $t=-u^{\bar r}v=1-w^k/u\in \C[ S]$ and we set $T=wR_2(t)\in \C[ S]$. From now on, we view the coordinate ring $\C[ S]=\C[u,v,w]/(u(1+u^{\bar r}v)-w^k)$ of $S$  as a subring of $$\C[u^{\pm 1},v,w]/(u(1+u^{\bar r}v)-w^k)\cong \C[u^{\pm 1},w].$$ Note that $T$ is transcendental over $\C$, as $R_2\neq 0$.
 Since  $\eta_i^*=j^*(\Theta^{F_i})^*\pi^*$, we obtain $$\eta_i^*u=T^k \text{\ \ and \ \ } \eta_i^*w=R_1(t)T+T^{r+1}F_i(T).$$ 
 Applying the identity 
 \begin{equation}\label{eq:similarlity_of_eta_1_and_eta_2}
  \eta_1^*\circ \alpha_{\lambda_1,Q_1}^*=\alpha_{\lambda_2,Q_2}^*\circ \eta_2^*, 
  \end{equation}
   to the function $u\in \C[S]$ gives $$\lambda_1^k T^k=(\lambda_2w+u^{\bar r+1}Q_2(u))R_2(t')^k\in \C[u^{\pm 1},w],$$ where $t'=\alpha_2^*t=1-(\alpha_2^*w)^k/(\lambda_2^ku)$. Putting $w=0$ we get $0=u^{\bar r+1}Q_2(u)R_2(1)^k$. Since $R_2(1)\neq 0$ by the definition of $R_2$ (see Lemma \ref{lem:Ri-condition}) it follows that $Q_2=0$. So $\alpha_2^*w=\lambda_2 w$, from which it follows that $t'=\alpha^*_2 t=t$. So the above equality implies that $\lambda_1 T=\eps \lambda_2wR_2(t')=\eps\lambda_2 T$ for some $k$-th root of unity $\eps$, which yields in turn that  $\lambda_1=\eps \lambda_2$. 
 Using the above expressions for $\alpha_1^*w$ and $\eta_2^*w$ we compute that  $$\eta_1^*\circ \alpha_1^*w=\lambda_2\eps R_1(t)T+ \lambda_2\eps T^{r+1}F_1(T)+ T^{k+r}Q_1(T^k)$$ and  $$\alpha_2^*\circ \eta_2^*w=\lambda_2 R_1(t)T+\lambda_2^{r+1}T^{r+1}F_2(\lambda_2 T).$$ Plugging this into \eqref{eq:similarlity_of_eta_1_and_eta_2} and dividing both sides by $T$ we obtain $$\lambda_2(\eps-1)R_1(t)+ \lambda_2\eps T^r F_1(T)+ T^{k+r-1}Q_1(T^k)=\lambda_2^{r+1}T^r F_2(\lambda_2T).$$ 
 For $w=0$ we get $\lambda_2(\eps-1)R_1(1)=0$, hence $\eps=1$, as $R_1(1)\neq 0$. Dividing the previous equality by $T^r$ we find the relation $$\lambda_2 F_1(T)+ T^{k-1}Q_1(T^k)=\lambda_2^{r+1}F_2(\lambda_2T)\in \C[u^{\pm 1},w].$$ 

Note that so far the argument works with any formula for $F$, provided $F(\mathbf{a})\in \C[x]$. Since by hypothesis $k\geq 2$ and $F_i\in \C[x^r]\subset \C[x^k]$, we get that $Q_1=0$ and 
\begin{equation}\label{eq:similarity_in_terms_of_F}
F_1(x)=\lambda_2^rF_2(\lambda_2 x).
\end{equation}
By the choice of $F_i$ we have $F_i(0)=1$, so $F_1(x)=F_2(\lambda_2 x)$ for some $r$-th root of unity $\lambda_2$. Since $F_i\in \C[x^r]$, we obtain $F_1=F_2$ and hence $\mathbf{a}_1=\mathbf{a}_2$.
 \end{proof}
 
\begin{rem}[Lifting the family]\label{rem:lifts_of_deformations} 
Let $S=S(k,\bar r k,1)$, $\ti S=\ti S(k,\bar rk)$. As above, having a polynomial $P\in \C[x]$ we have $\eta^P=\pi\circ \Theta^P\circ j\in \Et(S)$. Put $\ti \eta=j\circ \pi\in \Et(\ti S)$. Then $\pi \circ (\Theta^P\circ \ti \eta)=\pi \circ \Theta^P\circ j\circ \pi=\eta^P\circ\pi$, so $\Theta^P\circ \ti \eta$ lifts $\eta^P$. In particular, $\ti \eta$ lifts $\eta$. Thus, even if the classes of $\eta^P$ and $\eta$ are different in $\EC(S)$, as it is in Proposition \ref{prop:deformations}, the classes of their lifts are equal in $\EC(\ti S)$. 
\end{rem}
 
\begin{example}[Formulas for $S_2$]\label{ex:S_2-defromations_explicit} 
Let $\ti S_2=S(2,2)$, $S_2=S(2,2,1)$ (see \eqref{eq:S(k,rk,1)}) and let $F(\mathbf{a})$ be as in Proposition \ref{prop:deformations}.  We have  
\begin{eqnarray*} 
j(u,v,w)&=&(w,4v,1+2uv),\\ \Theta^P(x,y,z)&=&(x,y+2zP(x)+x^2P^2(x),z+x^2P(x))),\\ \pi(x,y,z)&=&(x^2,y,xz),\\ F(\mathbf{a})&=&1+x^2Q_\mathbf{a}(x^2),  
\end{eqnarray*} 
where $\mathbf{a}=(a_1,\ldots,a_n)\in \A^\8$ and $Q_\mathbf{a}(x)=a_1+a_2x+\ldots+a_nx^{n-1}$. 
 We obtain  $$\eta^{\mathbf{a}}:=\pi\circ\Theta^{F(\mathbf{a})}\circ j=(\eta_1,\eta_2,\eta_3),$$ where 
\begin{eqnarray*} 
\eta_1&=&w^2,\\ \eta_2&=&4v+2(1+2uv)(1+w^2Q_{\mathbf{a}}(w^2))+w^2(1+w^2Q_{\mathbf{a}}(w^2))^2,\\ \eta_3&=&(1+2uv)w+w^3(1+w^2Q_{\mathbf{a}}(w^2)).  
\end{eqnarray*}
By construction, all these étale endomorphisms of $S_2$ have degree $2$, and by Proposition \ref{prop:deformations} for different $a\in \A^{\8}$ no two of them are related by compositions by automorphisms of $S_2$ from left and right. 
\end{example}

\begin{rem}[Deforming $\eta$]
Set $S=S(k,r,1)$, where, as above, $r=\bar r k$, $k\geq 2$ and $\bar r\geq 1$. Note that the family of étale endomorphisms we constructed, $\{\eta^{\mathbf{a}}\}_{\mathbf{a}\in\A^\8}\subseteq \Et(S)$, where $\eta^{\mathbf{a}}=\pi\circ \Theta^{F(\mathbf{a})}\circ j_\eta$, does not contain any $\C^*$-equivariant member, in particular it does not contain the initial $\eta$. Indeed, if $\eta^{\mathbf{a}}$ is $\C^*$-equivariant for some $\mathbf{a}\in\A^\8$ then the condition \eqref{eq:similarlity_of_eta_1_and_eta_2} holds with $\eta_1=\eta_2=\eta^{\mathbf{a}}$ and every $\lambda_1=\lambda_2=\lambda\in \C^*$, so \eqref{eq:similarity_in_terms_of_F} holds with $F_1=F_2$ for every $\lambda\in\C^*$. This is impossible, because by definition $F(0)=1$.

Still, we can embed the initial $\eta\in \Etc(S)$ in a non-trivial family with members pairwise distinct in $\EC(S)$. For instance, take
\begin{equation}
\hat \eta^{a}:=\pi\circ\Theta^{P(a)}\circ j_\eta,\ \ a\in \A^1,
\end{equation}
where $P(a)=a^2+ax^r$. Let $a,b\in \A^1$. By the above proof, cf.\ \eqref{eq:similarity_in_terms_of_F}, we have 
\begin{align*}
[\hat \eta^a]=[\hat \eta^b] \text{\ in\ } \EC(S) & \iff \exists\lambda\in \C^*\: b^2+bx^r=\lambda^r (a^2+a(\lambda x)^r) \\
&\iff \exists\lambda\in \C^*\: b=\lambda^{2r}a \text{\ \ and \ } b^2=\lambda^r a^2 \\ &\iff \exists \lambda\in \C^*\: b=\lambda^{2r} a \text{\ \ and \ }  \lambda^{3r}=1\\ &\iff \exists t\in \C^*\: tb=a \text{\ \ and \ } t^{3}=1.
\end{align*}
Thus, changing the parameterizing variety to $\A^1/\Z_3$, we get a family as required.
\end{rem}

\bibliographystyle{amsalpha} 
\bibliography{bibl2016}
\end{document}